\documentclass[11pt]{amsart}
\usepackage{amsmath}
\usepackage{amssymb,mathrsfs, xcolor, geometry}
\usepackage{hyperref}
\hypersetup{linkbordercolor = {white}, citebordercolor = {white},}
\usepackage[alphabetic]{amsrefs}

\usepackage[alphabetic]{amsrefs}

\numberwithin{equation}{section}

\theoremstyle{definition}
\newtheorem{thm}[equation]{Theorem}
\newtheorem{Thm}[equation]{Theorem}
\theoremstyle{definition}

\newtheorem{prop}[equation]{Proposition} 
\newtheorem{Pro}[equation]{Proposition} 

\newtheorem{Lem}[equation]{Lemma}

\newtheorem{Assumption}[equation]{Assumption}

\newcommand{\ovl}[1]{\overline{#1}}

\newcommand{\pair}[1]{\langle #1\rangle}
\newcommand{\set}[1]{\left\{#1\right\}}
\newcommand{\ppair}[1]{\left\langle #1\right\rangle}
\newcommand{\pr}[1]{\left(#1\right)}
\newcommand{\abs}[1]{\left|#1\right|}
\newcommand{\bb}[1]{\mathbb{#1}} \newcommand{\td}[1]{\widetilde{#1}}

\newcommand{\RR}{\mathbb R}

\newcommand{\D}{\Delta}

\newcommand{\n}{\nabla}
\newcommand{\ep}{\varepsilon}

\newcommand{\dL}{\mathcal L}
\newcommand{\sbst}{\subseteq}
\newcommand{\h}{\textbf H}
\newcommand{\na}{\nabla}
\newcommand{\la}{\langle}
\newcommand{\ra}{\rangle}

\newcommand{\bd}{\partial}

\newcommand{\supp}{\text{supp}}

\newcommand{\N}{\textbf{n}}
\newcommand{\p}{\mathbf p}

\newcommand{\tf}{\tilde{f}}

\DeclareMathOperator{\dist}{dist}

\newcommand{\eps}{\varepsilon}

\newcommand{\bn}{\mathbf{n}}

\newcommand{\cH}{\mathcal{H}}

\title{Closed mean curvature flows with prescribed tangent flows}

\author{Jingwen Chen}
\author{Tang-Kai Lee}
\author{Ao Sun}
\author{Xinrui Zhao}

\email{jingwchmath@gmail.com, aos223@lehigh.edu, 
leetk@math.columbia.edu, and xinrui.zhao@yale.edu}

\date{\today}

\begin{document}
\begin{abstract}
Given an embedded shrinker $\Sigma$ in $\mathbb{R}^{n+1}$ that is either closed, asymptotically conical, or a Cartesian product of such a shrinker with $\mathbb{R}^k$, we construct a closed embedded mean curvature flow whose tangent flow at the first singularity is modeled on $\Sigma$. We also prescribe the first-order asymptotics of the tangent flow. This result is a consequence of a more general theorem that allows us to construct mean curvature flows with an additional force whose tangent flow and first-order asymptotics at the first singularity are prescribed.
\end{abstract}
\maketitle

\section{Introduction}

Singularity formation is one of the central topics in the study of geometric evolution equations. For mean curvature flow, Huisken's monotonicity formula and parabolic rescaling show that tangent flows at singularities are self-similarly shrinking solutions, called (self-)shrinkers. A central problem is to determine which shrinkers can actually arise as singularity models for flows starting from closed hypersurfaces.

Throughout the paper, we consider codimension-one mean curvature flows. A smooth one-parameter family of hypersurfaces $M^n(t) \subset \mathbb{R}^{n+1},\ t \in I,$ evolves by its mean curvature if 
\[
\partial_t x=\vec H(x),
\]
where $\vec H(x)$ is the mean curvature vector of the hypersurface at the point $x$. 
A shrinker is a smooth embedded hypersurface $\Sigma^n\subset \RR^{n+1}$ satisfying
\begin{align}\label{shrinker-eq}
    \vec H_\Sigma+\frac{x^\perp}{2}=0.
\end{align}
Equivalently, the family of hypersurfaces $(\sqrt{-t}\Sigma)_{t<0}$ is a self-similarly shrinking solution to mean curvature flow. If $(x_0,T)$ is a singular point of a mean curvature flow, we say that the singularity $(x_0,T)$ is modeled by $\Sigma$ if the tangent flow is the multiplicity-one self-shrinking flow $(\sqrt{-t}\Sigma)_{t<0}$. We will say $(\sqrt{-t}\Sigma)_{t<0}$ is the multiplicity-one self-shrinking flow associated to $\Sigma$.

The purpose of this paper is to prove realization and asymptotic prescription results for a large class of smooth embedded shrinkers. The classes of models considered here are the following:
\begin{equation}\label{eq:list-shrinkers}
\tag{$\dagger$}
\begin{minipage}{0.82\textwidth}
\begin{enumerate}
    \item closed shrinkers;
    \item smoothly asymptotically conical shrinkers;
    \item Euclidean products $\RR^\ell\times \Sigma_0$, where $\Sigma_0$ is as in (1) or (2).
\end{enumerate}
\end{minipage}
\end{equation}
These classes include the standard families of smooth embedded shrinkers appearing in the literature. In dimension two, they are also consistent with Ilmanen's conjectural picture for smooth embedded shrinkers; see, for example, \cite{I95}*{Note on page 19} and \cite{W14,W16}. 
There are numerous examples of shrinkers in the list \eqref{eq:list-shrinkers} constructed in $\RR^3$, cf. \cite{A89, KKM, N14, KM23, BNS25, IW, K24, SZ25, HMW25} and the references therein. 
We also want to mention that the end behaviors of the shrinkers constructed in \cite{Ke, SWZ, CSgenus} and the low-genus examples in \cite{BNS25, K24} are not explicit.
Nevertheless, they might still belong to \eqref{eq:list-shrinkers}.

Our first main result says that every nonflat shrinker in \eqref{eq:list-shrinkers} can be realized as the tangent flow of a first-time singularity of a closed embedded mean curvature flow.

\begin{thm}\label{thm:euclidean-realization}
Let $\Sigma^n\subset \RR^{n+1}$ be a nonflat smooth embedded shrinker belonging to one of the classes in \eqref{eq:list-shrinkers}. Then there exist a smooth closed embedded hypersurface $M_0^n\subset \RR^{n+1}$, a time $T<\infty$, and a point $x_0\in\RR^{n+1}$ such that the mean curvature flow starting from $M_0$ is smooth on $[0,T)$, has first singular time $T$, and has a tangent flow at $(x_0,T)$ equal to the multiplicity-one self-shrinking flow associated to $\Sigma$.
\end{thm}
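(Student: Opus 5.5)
The plan is to build the flow by gluing the self-similar solution into a closed hypersurface and then run a backward/forward fixed-point (Lyapunov–Schmidt type) argument in rescaled variables to kill the unstable directions. After translating $x_0$ to the origin and $T$ to $0$, pass to rescaled mean curvature flow $\tau=-\log(-t)$, $\tilde M(\tau)=e^{\tau/2}M(t)$, so that $\Sigma$ becomes a stationary solution of $\partial_\tau x = \vec H + x^\perp/2$. We need a closed initial surface $M_0$ whose rescaled flow converges to $\Sigma$ smoothly on compact sets as $\tau\to\infty$, with no earlier singularity. The natural tool is the Colding--Minicozzi linearized operator $L=\Delta-\tfrac12\langle x,\nabla\cdot\rangle+|A|^2+\tfrac12$, acting on normal graphs over $\Sigma$ in the Gaussian space $L^2(e^{-|x|^2/4})$.

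First, we construct an approximate closed solution. In case (1) we simply take graphs over $\Sigma$ itself. In case (2) we truncate $\Sigma$ at a large radius $R$ (where it is close to its asymptotic cone $\mathcal C$) and close it off. For an original-time flow, $\sqrt{-t}\,\Sigma$ at scale $|x|\gg\sqrt{-t}$ is nearly the cone, and the cone's end can be evolved by the expander/smooth flow out of the cone region. Concretely, we take $M_0$ to be $\sqrt{T}\,\Sigma$ inside $B_{\rho}$ for some fixed $\rho$, and outside it we cap off by a smooth compact hypersurface chosen so that the flow in the annular/outer region stays smooth with curvature bounded for $t\in[0,T]$. This works by pseudolocality, since at scale $\rho\gg\sqrt T$ the surface is a small-Lipschitz graph over the cone. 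In case (3) we handle the $\RR^\ell$ factor by replacing $\RR^\ell$ with a large round sphere $S^\ell$ (or torus is not embeddable, so a large sphere or a capped cylinder) of radius $\gg\sqrt T$, so that $\sqrt{-t}\Sigma_0\times S^\ell_{\text{large}}$ is nearly the product on the relevant scales. This is then combined with the cone-capping if $\Sigma_0$ is noncompact.

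Second, we run the perturbation argument. The operator $L$ has finitely many eigenvalues $\lambda\le 0$ on $\Sigma$ (for the class \eqref{eq:list-shrinkers} the spectrum is discrete, by the cone asymptotics/closedness; in case (3) the $\RR^\ell$ directions are treated in a weighted space where the Gaussian makes the spectrum discrete as well). The error from the gluing is supported where $|x|\gtrsim \rho e^{\tau/2}$ and so it is exponentially small in Gaussian norm. We consider the family $M_0(a)$ obtained by adding $\sum a_i\phi_i$ (suitably cut off) for the finitely many nonnegative-growth modes $\phi_i$, including the translation and dilation modes $x_i$-components of $\nu$ and $H$. A standard degree/Brouwer fixed point argument (as in the constructions of Herrero--Velázquez and Angenent--Velázquez type) then picks $a$ so that the unstable components stay bounded by the decaying error for all $\tau$. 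The stable part decays by the spectral gap, so the rescaled flow converges to $\Sigma$ exponentially in Gaussian $L^2$. Upgrading this to smooth convergence on compact sets, with curvature control on all of $\tilde M(\tau)$, uses interior parabolic estimates together with the pseudolocality bounds from step one.

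Finally, smooth local convergence of the rescaled flow to $\Sigma$ with multiplicity one gives that the tangent flow at $(0,T)$ is $(\sqrt{-t}\Sigma)$. Nonflatness guarantees that this point is genuinely singular, and the curvature bounds away from the origin show that $T$ is the first singular time. The main obstacle I expect is the uniform control of the noncompact ends in the rescaled picture. The graph over $\Sigma$ must be controlled in a region of radius growing like $e^{\tau/2}$, where the Gaussian weight gives no information, and we must match it with the pseudolocality region without losing the exponential smallness. I would first try the approach via weighted Hölder norms $\sup |x|^{-k}$ on the cone end, using the fact that the conical end of a shrinker is rigid to all orders, so that the drift term $-\tfrac12 x\cdot\nabla$ transports errors outward.
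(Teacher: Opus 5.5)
\emph{Classes (1) and (2).} Here your outline matches the paper's route. You cap the conical end at scale $\sim e^{\tau_0/2}$, perturb by cut-off eigenfunctions below a rate $\lambda_*\in(0,\tfrac12)$ in a spectral gap, and close with a Wa\.{z}ewski/degree argument. The large-scale control you flag as the main obstacle is supplied in the paper by explicit barriers $e^{-2\lambda_*(\tau+\tau_0)}(D\tilde f^{2\lambda_*+1}-B\tilde f^{2\lambda_*})$ for $h$ and an analogous one for $\nabla h$, together with pseudolocality on the cap. These barriers rely on $\sup_\Sigma \tilde f(|A|^2+|\nabla A|)<\infty$.

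\emph{Class (3) has a genuine gap.} First, a literal product $\Sigma_0\times S^\ell$ has codimension two in $\RR^{n+1}$, so your closed hypersurface is not specified. Second, bending the $\RR^\ell$ factor produces a defect of order $e^{-\tau/2}$ on the whole graphical region. It is not a cutoff error supported at $|x|\gtrsim\rho e^{\tau/2}$, as you claim.

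More seriously, a Gaussian-weighted box argument over $\RR^\ell\times\Sigma_0$ centred at $x_0$ controls nothing along the noncompact factor. At every point of the $\RR^\ell$-direction the surface carries a copy of $\sqrt{-t}\,\Sigma_0$. Far from $x_0$ it is therefore highly curved, invisible to $e^{-|x|^2/4}$ after rescaling, and out of reach of pseudolocality, since there is no small-curvature conical end there. As a result:
\begin{itemize}
\item nothing in your plan rules out an earlier singularity elsewhere along this tube, or gives graph bounds there;
\item the barriers fail, because $|A|$ does not decay along $\RR^\ell$;
\item neutral modes such as $(z_iz_j-2\delta_{ij})H$, which grow quadratically in $z\in\RR^\ell$, are exactly what make neckpinch intermediate-region analysis delicate.
\end{itemize}
Discreteness of the spectrum is not the issue.

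The missing idea is symmetry reduction, which is how the paper handles (3) (Theorem~\ref{thm:rotational-realization}). Write $\RR^{n+1}=\RR^k\times\RR^{\ell+1}$. Place the capped, perturbed copy of $\Sigma_0$ in the quotient half-space $\RR^k\times(0,\infty)$, with the rotation axis at rescaled distance $\sim C_{\rm ax}e^{\tau_0/2}$, and take its $SO(\ell+1)$-invariant lift. The profile then moves by the forced flow $\partial_tX=\vec H-\frac{\ell}{\varrho}(\partial_y)^\perp$, where $\varrho$ is the distance to the axis. After rescaling, the forcing is $O(C_{\rm ax}^{-1}e^{-\frac12(\tau+\tau_0)})$ with matching derivative bounds. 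Since $\lambda_*<\tfrac12$, this forcing is perturbative both in the projected ODEs and in the barriers.

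So what you actually need is your class (1)/(2) argument for forced flows over $\Sigma_0$, i.e.\ Theorem~\ref{thm:forced-realization} under Assumption~\ref{t:rmcf}. Applying it to the profile and lifting gives a first-time singularity along an $S^\ell$-orbit with tangent flow $\Sigma_0\times\RR^\ell$. Comparison with shrinking cylinders keeps the lift off the axis, and the $\RR^\ell$-directions never have to be analysed directly.
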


Thus, within the class \eqref{eq:list-shrinkers}, there is no further obstruction to realizing a smooth embedded shrinker as a first-time singularity model of a closed embedded mean curvature flow. In contrast, some other singularity models are known to be obstructed from appearing as the blow-up limit of closed geometric flows. For example, the $1$-dimensional embedded translator, grim reaper, can not show up as the blow-up limit of smooth embedded curve shortening flow; the $2$-dimensional Cigar soliton $\times\RR$ can not show up as the blow-up limit of $3$-dimensional Ricci flow of closed Riemannian manifolds.

Theorem \ref{thm:euclidean-realization} is obtained from a more flexible statement for forced mean curvature flow. After a tangential reparametrization, we write such a flow in the form
\[
    \partial_t F=\vec H_F+\phi[F,t]\bn_F,
\]
where $\bn_F$ is a choice of unit normal and $\phi$ is a prescribed scalar forcing term. The precise hypothesis on admissible $\phi$ is stated in Assumption \ref{t:rmcf}.

\begin{thm}[Forced realization]\label{thm:forced-realization}
Let $\Sigma^n\subset \RR^{n+1}$ be either a closed shrinker or a nonflat smoothly asymptotically conical shrinker. 
If the forcing term $\phi$ satisfies Assumption \ref{t:rmcf} with $\delta_0,\eps_0$ in Assumption \ref{t:rmcf} sufficiently small, then there exists a smooth closed embedded hypersurface $M_0\subset \RR^{n+1}$ whose forced mean curvature flow develops a first-time singularity with tangent flow equal to the multiplicity-one self-shrinking flow associated to $\Sigma$.
\end{thm}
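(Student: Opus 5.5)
The plan is to pass to rescaled (self-similar) variables and build the flow by a topological shooting argument around the shrinker $\Sigma$. With $T$ the intended singular time and $x_0=0$, set $\tau=-\log(T-t)$ and $\widetilde M(\tau)=e^{\tau/2}M(t)$. The forced flow becomes a rescaled mean curvature flow with a perturbation term. Assumption \ref{t:rmcf}, with $\delta_0,\eps_0$ small, should make this perturbation small and exponentially decaying in $\tau$ on compact regions. If $\widetilde M(\tau)=\text{graph}_\Sigma u(\cdot,\tau)$, then $u$ satisfies
\[
\partial_\tau u = L u + Q(u) + E(\tau),
\]
where $L=\Delta-\tfrac12 x\cdot\nabla+|A|^2+\tfrac12$ is the Jacobi operator, $Q$ is quadratic, and $E$ is the forcing error. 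Since $L$ is self-adjoint in $L^2(e^{-|x|^2/4})$ with discrete spectrum, only finitely many eigenfunctions $\phi_1,\dots,\phi_I$ are unstable or neutral. These include the directions generated by translations, dilation, and (if $\Sigma$ is nonflat) rotations.

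We start from initial data $u(\cdot,\tau_0)=\chi\sum_{i\le I} a_i\phi_i$ at a large $\tau_0$, with $a\in B_{\eps e^{-\mu\tau_0}}\subset\RR^I$ and $\chi$ a cutoff to the region $|x|\le R(\tau_0)$. In the closed case no cutoff is needed. In the asymptotically conical case we glue the cut-off graph outside $|x|\sim R$ to the cone and then cap it off far away by a smooth closed embedded hypersurface. The cap should be chosen so that, by pseudolocality and the barrier/rigidity of conical ends, the region outside $|x|\gtrsim \sqrt{T-t}\,R$ stays smooth and uniformly controlled up to time $T$. This makes $M_0$ closed and embedded.

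The core is an a priori estimate. Suppose $u$ stays in a weighted $C^{2,\alpha}$ norm ball $\|u(\tau)\|\le e^{-\mu\tau}$ on the growing domain $|x|\le R(\tau)$ with $R(\tau)\to\infty$ slowly, and suppose the unstable coefficients $a_i(\tau)=\langle u,\phi_i\rangle$ satisfy $|a(\tau)|\le \eps e^{-\mu\tau}$. Then the stable component decays strictly better, so the exit can only occur through the unstable coefficients. The strategy for this estimate is:
\begin{enumerate}
\item Use Duhamel on the stable subspace, bounding the spectral gap against $\mu$.
\item Use interior and Ecker–Huisken pseudolocality estimates to control the boundary region and the gluing error.
\item Use the smallness of $E(\tau)$, which comes from Assumption \ref{t:rmcf}.
\end{enumerate}
Once this is in place, the map sending $a$ to the normalized exit value $e^{\mu\tau}a(\tau_{\rm exit})$ is continuous on the set of exiting data. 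It equals the identity on $\partial B$ because unstable modes grow monotonically outward, so the standard Ważewski/Brouwer argument produces a choice of $a$ whose solution never exits.

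For that $a$, $\widetilde M(\tau)\to\Sigma$ smoothly on compact sets as $\tau\to\infty$, with multiplicity one. Hence the forced flow first becomes singular at $(0,T)$ with tangent flow $\sqrt{-t}\,\Sigma$. We also need to check that no earlier singularity occurs. Inside, this follows because the graph stays smooth; outside, it follows from the cap control. The neutral directions need separate treatment. Translations and dilation are absorbed by the free parameters $(x_0,T)$, which we can include as shooting parameters, or by working modulo them. Rotations are harmless because they leave $\Sigma$ invariant up to a rigid motion.

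The main obstacle I expect is the asymptotically conical case: controlling $u$ near the moving boundary $|x|\sim R(\tau)$, where the Gaussian weight degenerates and $L$ is not uniformly controlled. There I would use the conical ends and smooth pseudolocality to get $C^{2}$ bounds of size $O(|x|^{-1})$ in the outer region, matching with the inner weighted estimate through a cutoff whose commutator error decays like $e^{-R^2/8}$.
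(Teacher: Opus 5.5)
Your architecture is the paper's: cap the cone far out, shoot with the modes whose eigenvalue lies below a rate $\mu$, show that the only exit is a strictly outward crossing of the unstable face, then run a retraction/degree argument. For a closed shrinker your sketch is essentially enough. The gap is the a priori estimate in the asymptotically conical case. You need some step that makes the \emph{pointwise} faces of the box inward-pointing, and that makes the nonlinear and cutoff errors in the $L^2_W$ mode equations $o(e^{-\mu\tau})$; the stable-mode Duhamel step needs this too.

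Matching an inner weighted estimate to outer pseudolocality at a slowly growing radius $R(\tau)$ cannot supply that step. On the inner side, $L^2_W$ bounds plus interior estimates only give $|u|\lesssim e^{R^2/8}e^{-\mu\tau}$ near $|x|=R$, so no polynomially weighted $C^{2,\alpha}$ bound can be improved there. On the outer side, pseudolocality gives only scale-invariant smallness ($|u|\lesssim\eps|x|$, $|A|\lesssim|x|^{-1}$) with no decay in $\tau$. Hence the cutoff commutator has $L^2_W$ size about $\eps R^{C}e^{-R^2/8}$, which is $o(e^{-\mu\tau})$ only if $R(\tau)^2\gtrsim 8\mu\tau$. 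That is not slow growth, and at that radius the inner bound $e^{R^2/8}e^{-\mu\tau}$ is no longer small. Moreover, pseudolocality run from the initial hypersurface controls the flow only at a fixed unrescaled scale, i.e.\ the rescaled region $f\gtrsim\Gamma_0e^{\tau+\tau_0}$; this is \eqref{e:exterior-regularity}. Nothing in your plan gives quantitative decay on the intermediate region $R(\tau)\lesssim|x|\lesssim e^{(\tau+\tau_0)/2}$.

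The missing ingredient is a decay estimate with the right polynomial growth, valid all the way out to the parabolic scale. The paper imposes the pointwise faces globally in homogeneous form: $|h|\le\eps_0\tilde r$, $|\nabla h|\le\eps_1$, $|\nabla^2h|\le\eps_2$. It improves them via Theorem \ref{impro}, which gives $|\nabla^\ell h|\le Wq(\Theta)\,\tilde r^{2\lambda_*+1-\ell}e^{-\lambda_*(\tau+\tau_0)}$ for $\ell=0,1$ and the analogous $C^2$ bound on all of $\Sigma$. At $f\sim\Gamma_0e^{\tau+\tau_0}$ this is $\lesssim\Gamma_0^{\lambda_*}\tilde r$, which matches the exterior pseudolocality bound. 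The proof of Theorem \ref{impro} has four ingredients:
\begin{itemize}
\item the maximum-principle barrier $e^{-2\lambda_*(\tau+\tau_0)}(D\tilde f^{2\lambda_*+1}-B\tilde f^{2\lambda_*})$ for $h^2$ (Proposition \ref{barrier}) on the moving annulus $\Gamma<f<\Gamma e^{\tau+\tau_0}$;
\item an analogous barrier for $|\nabla h|^2$ (Appendix \ref{s:barrier-calc}), needed because $h$ may grow linearly;
\item the compact interior estimate (Proposition \ref{int1}), which feeds the inner face of the annulus;
\item the cone-rescaled interior estimate (Lemma \ref{int2}) for second derivatives.
\end{itemize}

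This is also where the restriction $\mu=\lambda_*<\frac12$ is forced, which your proposal never imposes. The forcing $Y_0$ is only $O(\delta_0e^{-\frac12(\tau+\tau_0)})$, so no box with rate $\mu\ge\frac12$ can absorb it. In addition, the square-root forcing term in the barrier inequality is controlled only through $(\tilde f e^{-(\tau+\tau_0)})^{\frac12-\lambda_*}$, which is bounded on the annulus only when $\lambda_*<\frac12$.

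Finally, your separate treatment of translations, dilation and rotations is unnecessary, and partly unavailable, since the forcing breaks these symmetries and they cannot be absorbed into $(x_0,T)$. Once $\mu>0$, every mode with eigenvalue below $\mu$, including the rotational ones with eigenvalue $0$, is simply one of the shooting parameters, as in the paper.
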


Theorem \ref{thm:forced-realization} is useful because several geometric situations become forced mean curvature flows after a natural rescaling or quotient. The first application is to mean curvature flow in a general Riemannian ambient manifold.

\begin{thm}[Riemannian ambient manifolds]\label{thm:riemannian-realization}
Let $(N^{n+1},g)$ be a smooth Riemannian manifold. Let $p\in N$, and identify $T_pN$ with $\RR^{n+1}$ using a normal coordinate centered at $p$. Let $\Sigma^n\subset T_pN$ be a nonflat smooth embedded shrinker of type (1) or (2) in \eqref{eq:list-shrinkers}. Then there exists a smooth closed embedded hypersurface $M_0\subset N$ whose mean curvature flow has first singular time $T<\infty$ and has $p$ as a singular point at time $T$, such that, in normal coordinates centered at $p$, a tangent flow at $(p,T)$ is the multiplicity-one self-shrinking flow associated to $\Sigma$.
\end{thm}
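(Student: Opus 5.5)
We reduce Theorem \ref{thm:riemannian-realization} to Theorem \ref{thm:forced-realization} by a parabolic rescaling in normal coordinates about $p$. Fix normal coordinates $x$ on a geodesic ball $B_{r_0}(p)\subset N$ and identify it with $B_{r_0}\subset\RR^{n+1}=T_pN$. In these coordinates $g_{ij}(x)=\delta_{ij}+O(|x|^2)$ and $\partial g=O(|x|)$. For a hypersurface $M\subset B_{r_0}$, the Riemannian mean curvature vector is $\vec H_g=\vec H_{\mathrm{euc}}+E$. Here the error $E$ depends on $x$, the tangent plane and the second fundamental form. It is linear in the second fundamental form with coefficient $O(|x|^2)$, and has a zeroth-order part $O(|x|)$ coming from Christoffel symbols. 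Using the Euclidean normal $\bn$ and a tangential reparametrization, the Riemannian mean curvature flow therefore becomes the forced flow $\partial_t F=\vec H_F+\phi[F,t]\bn_F$, where $\phi$ collects the normal component of the error, including the discrepancy between the $g$-normal speed and the Euclidean normal speed.

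Next we rescale. For a small scale $\lambda>0$, set $\tilde F=\lambda^{-1}F$ and $\tilde t=\lambda^{-2}t$. The rescaled flow is again a forced flow, now with forcing $\tilde\phi=\lambda\,\phi(\lambda\,\cdot)$, and it lives in the region $B_{r_0/\lambda}$. On the bounded region where the construction takes place, the zeroth-order part of $\tilde\phi$ is $O(\lambda^2|\tilde x|)$ and the curvature-dependent part is $O(\lambda^2|\tilde x|^2)|\tilde A|$. Both are small, with all derivatives small as well. We then check that $\tilde\phi$ satisfies Assumption \ref{t:rmcf} with $\delta_0,\eps_0$ as small as we like once $\lambda$ is small. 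This requires two adjustments. First, we multiply the metric perturbation by a cutoff so that the coordinate metric is defined on all of $\RR^{n+1}$. Second, we verify the structural conditions the assumption imposes, such as locality, smooth dependence on the $2$-jet, and decay or growth in $|x|$. The growth in $|\tilde x|$ is the delicate point. In the shrinker-rescaled picture the relevant region has size comparable to $\sqrt{T}$ times the scale at which $\Sigma$ is truncated, and we choose $\lambda$ after fixing that size.

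We then apply Theorem \ref{thm:forced-realization} to $\tilde\phi$ and the given shrinker $\Sigma$. This yields a closed embedded $\tilde M_0$ whose forced flow has a first-time singularity modeled on $\Sigma$. Translating if needed, we may take the singularity at the origin, since the construction in Theorem \ref{thm:forced-realization} presumably centers it there. Scaling back by $\lambda$ gives $M_0=\lambda\tilde M_0$. After shrinking $\lambda$ further, $M_0$ lies inside the normal coordinate ball, and its $g$-mean curvature flow coincides with the forced flow until the first singular time. Because the flow stays in a bounded region, avoidance and comparison keep it inside the ball up to time $T$. Tangent flows are invariant under this scaling, and the metric tends to the Euclidean one under blow-up at $p$. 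Hence the tangent flow at $(p,T)$ in normal coordinates is the multiplicity-one flow $(\sqrt{-t}\Sigma)_{t<0}$.

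The main obstacle is verifying Assumption \ref{t:rmcf} for the Riemannian error term. That term contains a part proportional to the second fundamental form, so $\phi$ is not a lower-order forcing. We need the assumption to allow quasilinear dependence with small coefficient, and in the type (2) case we must control the unbounded curvature scale near the singularity. We plan to handle the second point by noting that near the singularity $|x|\to 0$ in the original coordinates, so $|x|^2|A|\lesssim |x|^2/\sqrt{T-t}$ stays small on the parabolic region $|x|\lesssim\sqrt{T-t}$.
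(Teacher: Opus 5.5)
Your route is the paper's. Theorem~\ref{thm:riemannian-realization} is deduced from Theorem~\ref{thm:forced-realization} via Proposition~\ref{p:riem-admissible}. There the normal-coordinate homothety turns the Riemannian correction into a forcing affine in $\nabla^2h$; the quasilinear part goes into $Y_3^{ij}$, which Assumption~\ref{t:rmcf} explicitly allows. The scale is chosen after $\Gamma_0$ so that $C\eta_{\rm amb}\Gamma_0^2\leq\delta_0$ on the whole region $|F|\leq\Gamma_0e^{\frac12(\tau+\tau_0)}$, so your separate worry about curvature blow-up near the singularity is unnecessary. The metric is extended to a complete bounded-geometry metric, and the flow is confined by the convex function $|\xi|^2$.

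The one item you leave out is part (3) of the Assumption, the exterior curvature bound \eqref{e:exterior-regularity}. The paper obtains it from Chen--Yin pseudolocality on that extended metric, so you should add it to your list of things to check.
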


The second application concerns rotationally symmetric flows. Write
\[
    \RR^{n+1}=\RR^k\times \RR^{n-k+1},
\]
and let $SO(n-k+1)$ act on the second factor. The quotient can be identified with the half-space
\[
    \RR^k\times [0,\infty).
\]
An $SO(n-k+1)$-invariant mean curvature flow in $\RR^{n+1}$ induces a forced mean curvature flow for its profile hypersurfaces in this quotient.

\begin{thm}[Rotationally symmetric realization]\label{thm:rotational-realization}
Let $0< k<n$, and let $\Sigma^k\subset \RR^{k+1}$ be a nonflat smooth embedded shrinker of type (1) or (2) in \eqref{eq:list-shrinkers}. Then there exists a smooth closed embedded hypersurface
\[
    M_0^n\subset \RR^{n+1}=\RR^k\times \RR^{n-k+1},
\]
invariant under the standard $SO(n-k+1)$-action, such that the induced profile flow in $\RR^k\times(0,\infty)$ develops a first-time singularity whose tangent flow is modeled on $\Sigma$. Equivalently, the corresponding singularity of the $SO(n-k+1)$-invariant flow in $\RR^{n+1}$ has a tangent flow modeled on
\[
    \Sigma\times \RR^{n-k}.
\]
\end{thm}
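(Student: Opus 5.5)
We deduce Theorem \ref{thm:rotational-realization} from Theorem \ref{thm:forced-realization}, applied in dimension $k$ with ambient space $\RR^{k+1}=\RR^k\times\RR$. The forcing term comes from the rotational symmetry.

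\textbf{Step 1: the profile equation.} Write points of $\RR^{n+1}$ as $(y,z)$ with $y\in\RR^k$ and $z\in\RR^{n-k+1}$, and set $r=|z|$. An $SO(n-k+1)$-invariant hypersurface $M$ is determined by its profile $\Gamma\subset\RR^k\times(0,\infty)$. Its mean curvature splits as
\[
H_M=H_\Gamma-\frac{(n-k)\,\langle e_r,\bn_\Gamma\rangle}{r}.
\]
The second term comes from the $(n-k)$ round directions, each contributing principal curvature $-\langle e_r,\bn_\Gamma\rangle/r$. Consequently, $M(t)$ moves by mean curvature flow exactly when $\Gamma(t)$ moves by
\[
\partial_t F=\vec H_F+\phi[F,t]\bn_F,\qquad \phi[F,t](x)=-\frac{(n-k)\langle e_{k+1},\bn_F(x)\rangle}{x_{k+1}}.
\]
This $\phi$ depends only on the position and normal of $F$, is smooth where $x_{k+1}>0$, and does not depend explicitly on $t$.

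\textbf{Step 2: translate and check Assumption \ref{t:rmcf}.} Choose a center $x_*=(0,R)$ with $R$ large, and place the singularity at $x_*$. Near $x_*$ the forcing is bounded by $(n-k)/(R-\rho)$ on a ball of radius $\rho<R$. Near the singular time $T$, the rescaled forcing $\sqrt{T-t}\,\phi$ is then of size $O(\sqrt{T-t}/R)$ on the relevant parabolic region. Derivatives satisfy similar bounds, with one extra factor of $1/R$ for each derivative.

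I expect Assumption \ref{t:rmcf} to ask for exactly this kind of smallness: a bound by $\delta_0$ and $\eps_0$ on the forcing and its derivatives in a neighborhood, in rescaled variables. That smallness is achieved by taking $R$ large, or equivalently by keeping the curvature scale of the construction small compared with the distance to the axis.

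The main obstacle is that $\phi$ is singular on the axis $\{x_{k+1}=0\}$. Assumption \ref{t:rmcf} may require global control of $\phi$, or at least control on the region the flow visits. My first attempt is to replace $\phi$ by a cut-off forcing $\tilde\phi=\chi(x_{k+1})\phi$, where $\chi$ vanishes for $x_{k+1}<R/4$ and equals $1$ for $x_{k+1}>R/2$. Theorem \ref{thm:forced-realization} then applies to $\tilde\phi$ and gives an initial profile $\Gamma_0$.

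\textbf{Step 3: show the cutoff was never used.} I then need to show that the profile flow stays in $\{x_{k+1}>R/2\}$ up to the singular time, so that it is a genuine $\phi$-flow. This should follow from the structure of the construction: it produces an initial surface that is a small perturbation of $\sqrt{T}\Sigma+x_*$ with $T$ small, capped off inside a ball $B_{CR'}(x_*)$ with $R'\ll R$. I would combine this with an avoidance or barrier argument. A large sphere's worth of controlled region persists for short time, and the flow is confined to a small neighborhood of $x_*$ until time $T$.

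\textbf{Step 4: embeddedness and the tangent flow.} With $\Gamma_0\subset\RR^k\times(R/2,\infty)$ closed and embedded, rotating it gives a smooth closed embedded $M_0$ diffeomorphic to $\Gamma_0\times S^{n-k}$. Its flow is the rotation of the profile flow. The first singular times agree, because curvature of $M$ is controlled by curvature of $\Gamma$ together with the bound $1/r\le 2/R$.

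At the singular point $(x_*,T)$, rescaling by $\lambda\to\infty$ sends the round factor of radius about $R$ to a flat $\RR^{n-k}$ with multiplicity one. The profile tangent flow is $\Sigma$, given by Theorem \ref{thm:forced-realization}. Hence the tangent flow of $M$ is $\Sigma\times\RR^{n-k}$. I would justify this convergence via local smooth convergence of the profiles, together with Huisken monotonicity to rule out higher multiplicity.
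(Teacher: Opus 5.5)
Your overall route is the paper's. Theorem \ref{thm:rotational-realization} comes from Theorem \ref{thm:forced-realization} applied to the profile flow with forcing $-(n-k)\langle e_{k+1},\bn\rangle/x_{k+1}$, with the axis placed at distance $C_{\mathrm{ax}}e^{\tau_0/2}$, large compared with the construction scale (Proposition \ref{p:rot-admissible}).

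The gap is the step you only ``expect'': Assumption \ref{t:rmcf} is not just a smallness condition on $\phi$.

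\begin{itemize}
\item Items (1)--(2) ask for the specific decomposition \eqref{e:forced nonlinear} with specific rates. For instance, $Y_1$ must be $O(e^{-(\tau+\tau_0)})$; this holds here because $h$ enters only through the denominator. The $x$-derivative weights $\tilde f^{-k/2}$ come from the conical geometry of $\Sigma$, not from extra powers of $1/R$: tangential derivatives of $\langle e_{k+1},\bn_F\rangle$ see $|A_\Sigma|$.
\item More importantly, item (3) requires well-posedness, the continuation criterion, and the exterior estimate \eqref{e:exterior-regularity}. These must hold with constants independent of $\tau_0$ and of the perturbation, for every capped perturbation used in the box and degree arguments. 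Smallness of the forcing does not give this.
\end{itemize}

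The paper obtains item (3) in three steps. First, it lifts the profile flow to the $SO(n-k+1)$-invariant hypersurface $M'_t$, which is an honest Euclidean mean curvature flow. Second, it compares $M'_t$ with the shrinking cylinders $\RR^k\times S^{n-k}(r(t))$, where $r(t)^2=r_0^2-2(n-k)(t-t_0)$ and $r_0$ is below the initial distance to the axis; avoidance keeps the lift at distance comparable to $C_{\mathrm{ax}}e^{\tau_0/2}$ from the axis. Third, it applies Euclidean pseudolocality and interior estimates to $M'_t$ and uses $|A_{M'}|^2=|A_M|^2+(n-k)\nu_y^2/\varrho^2$ to transfer the bounds back to the profile.

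Your cutoff $\tilde\phi=\chi\phi$ does not help here, and it puts the argument in the wrong order. A $\tilde\phi$-flow is the quotient of a Euclidean flow only while it stays in $\{\chi=1\}$. You plan to check that only afterwards, for the single output flow. But the hypotheses of Theorem \ref{thm:forced-realization} must be verified beforehand, for every parameter $\mathbf p$ in the ball, since the exit map and the retraction argument use all of them.

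The repair is to run your Step 3 first and uniformly. Every permitted capped profile lies at height at least $\frac12 R$; this must include the cap, not only the graphical part. The cylinder barrier on the lift then keeps each such flow at distance $\gtrsim R$ from the axis up to its maximal time. Once this is done, the cutoff is unnecessary, the lift supplies item (3), and your Step 4 goes through. Multiplicity one already follows from local smooth convergence of the profiles, so monotonicity is not needed there.
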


Theorems \ref{thm:forced-realization}, \ref{thm:riemannian-realization}, and \ref{thm:rotational-realization} imply Theorem \ref{thm:euclidean-realization} for all shrinkers in \eqref{eq:list-shrinkers}.

We would like to compare the construction method in this paper with the previous work of the second-named and fourth-named authors \cite{LZ}. The construction in \cite{LZ} used a doubling argument, which relies on the symmetry of the ambient Euclidean space, and it produces two identical first-time singularities. In contrast, the present argument in this paper does not rely on a symmetry that produces two identical first-time singularities. Although we still close off a large compact piece of the shrinker, the dynamical selection argument is local near a single prescribed model and yields either a single prescribed first-time singularity or an orbit of such singularities.

We next turn to a finer question. Once a tangent flow has been prescribed, one may ask whether the leading-order asymptotics of the rescaled flow near the singularity can also be prescribed. Let
\[
    L_\Sigma
    =
    \Delta_\Sigma
    -\frac12\langle x,\nabla_\Sigma(\cdot)\rangle
    +|A_\Sigma|^2+\frac12
\]
be the linearized operator of the rescaled mean curvature flow at $\Sigma$ (see Section \ref{prel: shrinker and linear} for a detailed description), acting on the weighted space
\[
    L_W^2(\Sigma)
    =
    L^2\!\left(\Sigma,e^{-|x|^2/4}\,d\mu_\Sigma\right).
\]
With this convention, a Fourier mode $\varphi$ satisfying
\[
    L_\Sigma\varphi=-\lambda\varphi,\qquad \lambda>0,
\]
corresponds formally to a decaying term $e^{-\lambda\tau}\varphi$ in the rescaled flow.

The next theorem shows that such leading-order terms can be prescribed when $\lambda\in(0,1/2)$.
In the asymptotically conical case, the admissibility condition includes the decay range required for the large-scale estimates.

\begin{thm}[Prescribed first-order asymptotics]\label{thm:first-order}
Let $\Sigma^n\subset\mathbb R^{n+1}$ be either a closed self-shrinker or a smoothly asymptotically conical self-shrinker.  Let $\lambda\in(0,1/2)$ be an eigenvalue of $-L_\Sigma$, and let $\varphi$ be an eigenfunction satisfying
\[
    L_\Sigma\varphi=-\lambda\varphi .
\]
Then there exists a mean curvature flow with additional forces satisfying Assumption \ref{t:rmcf} of closed embedded hypersurfaces whose rescaled flow converges to
$\Sigma$ as $\tau\to\infty$, smoothly in the closed case and locally
smoothly in the asymptotically conical case. Furthermore, for all sufficiently large $\tau$, the rescaled flow is represented, on the relevant graphical region, by normal graphs $u(\cdot,\tau)$ over $\Sigma$, and
\[
    \bigl\|
        \chi_\tau u(\cdot,\tau)-e^{-\lambda\tau}\varphi
    \bigr\|_{L_W^2}
    =
    o(e^{-\lambda\tau})
    \qquad\text{as }\tau\to\infty .
\]
Here $\chi_\tau\equiv 1$ in the closed case, while in the asymptotically conical case $\chi_\tau$ is the cutoff function specified in Step 1 of the proof of Theorem \ref{t:prescribe-first-asymptotic}.
\end{thm}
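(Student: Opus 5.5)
Since $\lambda\in(0,1/2)$ and $\varphi$ is an eigenfunction, I build the flow in rescaled variables by a shooting argument over finitely many unstable directions. In the rescaled time $\tau=-\log(T-t)$ the forced flow becomes a forced rescaled mean curvature flow. Near $\Sigma$ I write it as a normal graph $u(\cdot,\tau)$ obeying
\[
\partial_\tau u = L_\Sigma u + Q(u) + \Phi(u,\tau),
\]
where $Q$ is quadratic in $(u,\nabla u,\nabla^2u)$ and $\Phi$ is the rescaled forcing. Assumption \ref{t:rmcf} gives $\Phi$ exponential smallness, and I will need this with a rate strictly larger than $\lambda$. That is exactly where the requirement $\lambda<1/2$ enters: rescaling a bounded force produces decay like $e^{-\tau/2}$.

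I also have freedom to choose the forcing. I take it to be $\phi\equiv 0$ near the singular region, or else a term that vanishes to the required order, and I use the force only to close off the compact truncation of $\Sigma$ into a closed embedded hypersurface. This is the same closing-off used for Theorem \ref{thm:forced-realization}.

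I then split $L_W^2(\Sigma)$ spectrally. Let $E_{<\lambda}$ be the finite-dimensional span of eigenfunctions with eigenvalue $-\mu$, $\mu<\lambda$; this includes the unstable modes, such as $H$, the translations $x_i^\perp$, and the rotations. Let $E_\lambda$ be the $\lambda$-eigenspace and $E_{>\lambda}$ the rest.

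The initial data at a large time $\tau_0$ are
\[
u(\cdot,\tau_0)=\chi_{\tau_0}\Bigl(e^{-\lambda\tau_0}\varphi + \sum_j a_j\psi_j\Bigr),
\]
where $\{\psi_j\}$ is a basis of $E_{<\lambda}\oplus(E_\lambda\ominus\varphi)$ and the parameters satisfy $|a|\le e^{-(\lambda+\epsilon)\tau_0}$. I work in the weighted norm $e^{(\lambda+\epsilon)\tau}\|u-e^{-\lambda\tau}\varphi\|$ and set up a Ważewski-type exit argument. The solution exits when the projection onto $E_{<\lambda}$ leaves the box of size $e^{-(\lambda+\epsilon)\tau}$. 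On $E_{<\lambda}$ the linear part expands relative to the weight, so exit is transversal and the exit map restricted to the boundary sphere is homotopic to the identity. A topological degree argument then yields parameters $a$ for which the solution never exits.

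On that solution, the $E_{>\lambda}$ component decays faster by the spectral gap plus Duhamel. The $E_\lambda$ component stays at $e^{-\lambda\tau}\varphi$ up to $o(e^{-\lambda\tau})$, because the nonlinear error is $O(e^{-2\lambda\tau})$ and the forcing is $O(e^{-\tau/2})$, both $o(e^{-\lambda\tau})$. Projecting the equation onto $\varphi$ and integrating gives the stated asymptotic $\|\chi_\tau u-e^{-\lambda\tau}\varphi\|_{L^2_W}=o(e^{-\lambda\tau})$.

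Convergence to $\Sigma$ and the fact that the singularity is the first one follow as in Theorem \ref{thm:forced-realization}. Away from the graphical region the closed-off part is far and smooth, so pseudolocality-type estimates keep it regular up to time $T$.

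The main obstacle is the asymptotically conical case. There the graphical region is only $|x|\lesssim R(\tau)$ with $R(\tau)\to\infty$, and the cutoff $\chi_\tau$ creates commutator errors $[L_\Sigma,\chi_\tau]u$ supported near $|x|\sim R(\tau)$. These errors must be $o(e^{-\lambda\tau})$ in $L^2_W$. I would try to choose $R(\tau)=\sqrt{c\,\tau}$ so that the Gaussian weight $e^{-R^2/4}$ beats $e^{-\lambda\tau}$, which requires $c>4\lambda$. Separately, I need the large-scale estimates, namely a cone-graphical control of the outer region giving $|u|\lesssim$ polynomial in $|x|$ times $e^{-\lambda\tau}$, to remain valid out to radius $\sqrt{c\tau}$. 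Obtaining these barrier and pseudolocality estimates uniformly along the shooting family is where I expect the admissibility restriction on $\lambda$ to be used and the real work to lie.
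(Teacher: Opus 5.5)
\textbf{Gap: the $\varphi$-direction is neither a shooting parameter nor an exit face.} You shoot only over $E_{<\lambda}\oplus(E_\lambda\ominus\varphi)$. You then claim that the $E_\lambda$-component automatically equals $e^{-\lambda\tau}\varphi+o(e^{-\lambda\tau})$ because the error sources are $o(e^{-\lambda\tau})$. This step fails. The direction $\varphi$ is neutral for the rate $e^{-\lambda\tau}$, so projecting onto $\varphi$ gives
\[
e^{\lambda\tau}\langle u(\tau),\varphi\rangle_{L^2_W}=e^{\lambda\tau_0}\langle u(\tau_0),\varphi\rangle_{L^2_W}+\int_{\tau_0}^{\tau}e^{\lambda s}\langle R(s),\varphi\rangle_{L^2_W}\,ds,
\]
where $R$ collects the nonlinear, forcing and cutoff terms. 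The integral converges to a constant that is small, of order $e^{-\lambda\tau_0}+e^{-(\frac12-\lambda)\tau_0}$, but generically nonzero; for instance, the $\varphi$-projection of $Q(e^{-\lambda\tau}\varphi)$ has no reason to vanish. You therefore get $(1+c)e^{-\lambda\tau}\varphi$ as leading term, not $e^{-\lambda\tau}\varphi$.

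Equivalently, in your weighted norm $e^{(\lambda+\epsilon)\tau}\|u-e^{-\lambda\tau}\varphi\|$, the $\varphi$-component of the remainder grows like $c\,e^{\epsilon\tau}$. The solution then leaves through a face with no corresponding parameter: you have a ball of dimension $\dim E_{\le\lambda}-1$ mapping to a sphere $S^{\dim E_{\le\lambda}-1}$, which gives no contradiction. The same drift affects $E_\lambda\ominus\varphi$ if, as written, only the $E_{<\lambda}$-projection is monitored.

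\textbf{How the paper handles it.} The paper shoots over all of $E_\le=E_<\oplus E_0$, including $\psi$ itself. The initial data are $\chi_{\tau_0}(e^{-\lambda_m\tau_0}\psi+w_a)$, with $w_a$ corrected through $T_{\tau_0}^{-1}$ so that $\Pi_\le z_a(0)=e^{-\beta\tau_0}a$ exactly. The box $e^{\beta(\tau+\tau_0)}\|\Pi_\le z_a\|\le\mu$ with $\beta>\lambda_m$ makes every direction of $E_\le$ strictly expanding, $\psi$ included. The never-exiting solution therefore has $z=O(e^{-\beta(\tau+\tau_0)})$, and the coefficient of $\psi$ is exactly $1$. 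For an unforced flow you could instead renormalize $1+c$ to $1$ by a parabolic dilation, which translates $\tau$. Even then you would still need all of $E_\lambda\ominus\varphi$ in the box.

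\textbf{Two smaller points.} First, for a weighted box the errors must be $o(e^{-(\lambda+\epsilon)\tau})$, not merely $o(e^{-\lambda\tau})$. This forces $\lambda+\epsilon<\min\{\lambda_+,2\lambda,\tfrac12\}$, which is the paper's choice of $\beta$. The quadratic term is then controlled via the $C^2$ improvement at a rate $\alpha\in(\beta/2,\lambda)$.

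Second, the cutoff at $R(\tau)=\sqrt{c\tau}$ is unnecessary. The paper keeps the graph on all of $\{f\le\Gamma_0e^{\tau+\tau_0}\}$, where the transplantation error is $O(e^{-N(\tau+\tau_0)})$ in $L^2_W$ for every $N$. The large-scale control you flag as the main obstacle is exactly the barrier-based global $C^2$ improvement, Theorem~\ref{impro}, reused with $\alpha$ in place of $\lambda_*$.
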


Prescribing the asymptotics near the singularity model is closely related to the stable/unstable manifold theory. In the context of mean curvature flow, Angenent--Velazquez \cite{AV97} used the Wa\.{z}ewski Box principle to construct rotationally symmetric mean curvature flow profile near a cylindrical singularity. Sesum \cite{Sesum08} used the inverse function theory to prescribe the first-order asymptotics of mean curvature flow near a spherical singularity, and as a consequence, proved that the level set function of the mean curvature flow with a spherical singularity may not have $C^3$-regularity. This idea was later generalized in \cite{Strehlke20, SWX2}. For a general closed shrinker, in the $1$-dimensional case, the stable/unstable manifolds were first constructed by \cite{EW_CSF}. In higher dimensions, the construction of asymptotic orbits using contraction mapping theory was established in \cite{CM_dynamics}, and the stable/unstable manifolds using the invariant cone technique from dynamical systems were established in \cite{SX1}.

The first-order term in the rescaled flow contains geometric information about the singularity beyond its tangent flow. For cylindrical singularities, this phenomenon appears in the work of the third-named author, Zhihan Wang, and Jinxin Xue \cite{SWX1,SWX2}, where the leading asymptotic profile is related to the local structure of the singular set. In particular, the first-order term can distinguish isolated cylindrical singularities from non-isolated ones and, in the latter case, determine curvature information for the singular set. A related phenomenon occurs for flows modeled on Simons cone: the work of Angenent--Daskalopoulos--Sesum \cite{ADS26}, building on the solutions of Vel\'azquez \cite{V94}, shows that the side through which a flow continues past the singularity is encoded in the first-order asymptotics.

We expect that the existence of prescribed first-order asymptotics of mean curvature flow near a singularity could be the first step towards the higher-order asymptotics. In the special situation of spherical singularities and cylindrical singularities, the third-named author, Zhihan Wang, and Jinxin Xue have used the first-order asymptotics to derive the next-order asymptotics near a cylindrical singularity, see \cite{SWX2}.

\subsection{Proof ideas}
\label{sec:idea}

The main theorems are based on the Wa\.{z}ewski box argument. 
It is a topological argument that has been used to construct singularities possibly with the first-order behavior prescribed for geometric flows in \cite{AV97, Sto, LZ, SS26}.

We briefly describe how Theorem \ref{thm:forced-realization} is proven for an asymptotically conical shrinker $\Sigma.$
The main ideas follow similar structures as in \cite{Sto, LZ}.
However, several new ingredients are necessary to deal with more complicated nature of the flow in the setting of Theorem~\ref{thm:forced-realization}.
First, we construct a closed embedded hypersurface $\Sigma^{\rm cap}_R$ that coincides with $\Sigma$ in a large ball $B_R$ outside which the curvature is very small.
The goal is to suitably perturb $\Sigma^{\rm cap}_R$ so that the mean curvature flow starting from it develops a singularity with the prescribed tangent flow.

To this end, we look at the space of flows starting from perturbations using truncated eigenmodes.
These flows are assumed to have suitable decaying rates on both their stable and unstable components and form a finite-dimensional box.
A large portion of the arguments is dedicated to establishing an improved interior estimate to show that such a perturbed flow can leave the box only from the unstable side.
A topological argument 
implies the existence of a desired solution with the prescribed singularity modeled on $\Sigma.$ 
Roughly speaking, if such a long-time solution did not exist, each parameter $\p$ in a small $m$-dimensional closed ball $B$ would lead to a solution $h_{\p}$ existing up to a finite time $\tau(\p).$
The fact that a perturbed flow can leave the box only from the unstable side allows us to construct a continuous map from $B$ to its boundary, an $(m-1)$-dimensional sphere.
Showing that this continuous map is a retraction leads to a contradiction.
Theorem~\ref{thm:first-order} follows from the same principle but with a more refined box in the topological argument.

We organize the paper as follows.
In Section~\ref{s:Pre}, we start with preliminary results and provide the evolution equations of the forced mean curvature flow.
In Section~\ref{sec: bar func}, we construct barrier functions that will be used in Section~\ref{sec:int-est} to prove the global $C^2$ improvement theorem.
In Section~\ref{s:box-forced}, we prove the main theorems about prescribing tangent flows.
In Section~\ref{s:prescribe-first-order}, we prove the theorem about prescribing first-order asymptotics.
In Section~\ref{sec:closed-case}, we indicate how the proofs can be simplified to deal with the case of closed shrinkers.
We construct a barrier function for $\n h$ in Appendix~\ref{s:barrier-calc}, provide detailed proofs for Propositions \ref{p:riem-admissible} and \ref{p:rot-admissible} in Appendix~\ref{app:geometric-forcing}, and construct the closed embedded hypersurface $\Sigma^{\rm cap}_R$ mentioned above in Appendix~\ref{sec:Cap}.

\subsection*{Acknowledgment}
J.C. is supported by the AMS-Simons Travel Grant.
T.-K.L. acknowledges the support from Man-Chun Lee and NSF Grant DMS-2529637.
A.S. is supported by the AMS-Simons Travel Grant and the Simons Foundation Travel Support for Mathematicians; he thanks Zhihan Wang for some helpful discussions.
X.Z. is supported by NSF Grant DMS 1812142, NSF Grant DMS 1811267 and NSF Grant DMS 2104349; he thanks Lu Wang for some helpful discussions.

\section{Preliminary}
\label{s:Pre}

\subsection{Shrinkers and linearization} \label{prel: shrinker and linear}
Let $\Sigma$ be an $n$-dimensional smooth embedded hypersurface in $\bb R^{n+1}.$
We say $\Sigma$ is a shrinker if its mean curvature $\vec H = \h$ satisfies \eqref{shrinker-eq}.
Shrinkers arise naturally when singularities of a mean curvature flow are studied via a blow-up analysis, which has another interpretation by the rescaled mean curvature flow, as we will explain in the following.

Let $M$ be an $n$-dimensional manifold and $F\colon M\times[-1,0)\to\bb R^{n+1}$ be an embedded mean curvature flow in the sense that $\bd_t F = \h,$ the mean curvature vector of $F(\cdot,t).$
The rescaled flow
\begin{align*}
\td F(x,\tau)
:= e^{\tau/2}\cdot 
F\pr{x, -e^{-\tau}}
\end{align*}
satisfies the rescaled mean curvature flow equation 
$\bd_\tau \td F = \td\h + \td F/2.$
At $(0,0)\in\RR^{n+1}\times\RR$, a tangent flow of $F$ is modeled on a shrinker $\Sigma$ if and only if the rescaled flow $\td F(\cdot,t)$ subsequentially converges to the embedding of $\Sigma$ in the locally smooth sense.
Thus, to study the behavior of solutions to the rescaled flow, it is important to understand its linearization $(\bd_t-L)u=0$ where 
\begin{align*}
L
=L_\Sigma
:= \D_\Sigma 
- \frac 12 \pair{x, \n_\Sigma(\cdot)}
+ |A_\Sigma|^2 
+ \frac 12.
\end{align*}
The operator $L$ is self-adjoint with respect to the Gaussian weighted $L^2$-inner product $\pair{u,v}_{L^2_W}
:= \int_\Sigma u(x)\, v(x)\,e^{-|x|^2/4}d\cH^n(x);$ see \cite{CM12}. 
The corresponding $L^2$ and $H^k$ spaces will be denoted by $L^2_W=L^2_W(\Sigma)$
and $H^k_W=H^k_W(\Sigma).$ 

\subsection{Asymptotically conical shrinkers}
In the rest of the paper, we will mostly work with an asymptotically conical shrinker.
In Section~\ref{sec:closed-case}, we will indicate how the same proofs apply to a closed shrinker after suitable simplifications.

A smooth embedded shrinker $\Sigma\sbst\bb R^{n+1}$ is called asymptotically conical if its blowdown is a regular cone; that is, $\sqrt{-t}\Sigma\to\mathcal C$ in $C^\infty_{\rm loc}(\bb R^{n+1}\setminus\{0\})$ as $t\to 0^-$ where $\mathcal C$ is a cone whose link is a smooth embedded hypersurface in $S^{n}.$
As explained in \cite{BW17, CS, LZ}, there exists an $L^2_W$-orthonormal basis $\{h_i\}_{i\in\bb N}$ of $L^2_W(\Sigma)$ such that 
\begin{equation}
\label{e:L-eigenfunction}
Lh_i=-\lambda_i h_i.
\end{equation}
The eigenvalues $\lambda_i$'s satisfy $\lambda_1<\lambda_2\le\lambda_3\le\cdots$ tending to $\infty$ and each eigenspace $E_{\lambda_i}$ is finite-dimensional.
Each of the eigenfunction $h_i$ has polynomial growth; see \cite{CM25} and \cite{LZ}*{Theorem 2.4}.

Some weighted spaces considered in \cite{CS} will be used when we establish the estimates.
Let $\tilde{r}\geq 1$ be a smooth function such that $\tilde{r}=|x|+1$ for $|x|\geq1$. 
We consider the weighted norms 
\begin{align*}
\|u\|_{0;-\gamma}^{\rm hom}&:=\sup_{x\in\Sigma}\tilde{r}(x)^{\gamma}|u(x)|,\text{ and}\\
[u]_{\alpha;-\gamma-\alpha}^{\rm hom}&:=\sup_{x,y\in \Sigma}\frac{1}{\tilde{r}(x)^{-\gamma-\alpha}+\tilde{r}(y)^{-\gamma-\alpha}}\frac{|u(x)-u(y)|}{|x-y|^\alpha}.
\end{align*}
The space $C^{0,\alpha}_{{\rm hom},-\gamma}$ is defined to collect all $u$ such that
\begin{align*}
\|u\|_{C^{0,\alpha}_{{\rm hom},-\gamma}}
:=\|u\|_{0;-\gamma}^{\rm hom}+[u]^{\rm hom}_{\alpha;-\gamma-\alpha}<\infty,
\end{align*}
and the space $C^{1,\alpha}_{{\rm hom},-\gamma}$ is defined to collect all $u$ such that
\begin{align*}
\|u\|_{C^{1,\alpha}_{{\rm hom},-\gamma}}
:=\sum_{j=0}^1\|(\na_\Sigma)^ju\|_{C^{0,\alpha}_{{\rm hom},-j-\gamma}}<\infty.
\end{align*}
We will mostly use the norms when $\gamma=-1,$ that is, the $C^{k,\alpha}_{{\rm hom},1}$-norms.

In the rest of the paper, we will mostly view hypersurfaces and flows by their embedding maps into the Euclidean space.
For example, on a shrinker $F\colon\Sigma\to\RR^{n+1},$ the shrinker equation \eqref{shrinker-eq} and
\begin{align}\label{shrinker-div}
{\rm div}_{\frac{|F|^2}{4}}(V)
= {\rm div}(V)-\ppair{V,\,\frac{F}{2}},
\end{align}
imply
\begin{align}\label{shrinker-F-Laplacian}
\Delta \frac{|F|^2}{4}-\na_{\na^g \frac{|F|^2}{4}}\frac{|F|^2}{4}=\frac{n}{2}-f,
\end{align}
where we let $f:=|F|^2/4$ and let $g$ be the metric induced by the Euclidean one.


\subsection{Forced mean curvature flow}
\label{s:fmcf}

In this section, we consider the forced mean curvature flow
\begin{equation}
\label{e:fmcf}\partial_tF_0=\h(F_0,t)+\phi_0[F_0,t]\cdot\N_{F_0},
\end{equation}
for a family of immersions $F_0 : M\times I \to \mathbb{R}^{n+1}$ where $M$ tentatively denotes an abstract manifold.
The main theorems will be proven after they are reduced to two different types of forced mean curvature flows in Section \ref{s:rot-riem}.
After rescaling, the equation for the rescaled forced mean curvature flow $F_r\colon M\times [0,\infty)\to\bb R^{n+1}$ is
\begin{equation} \label{e:rfmcf}
\partial_\tau F_r=\h_{F_r}+\frac{1}{2}F_r+{\phi_r}[F_r,\tau]\cdot \N_{F_r}.    
\end{equation}

We fix an asymptotically conical shrinker $\Sigma^n \subset \mathbb{R}^{n+1}$ and view it as an embedding $F: \Sigma \to \mathbb{R}^{n+1}.$ 
If $F_r$ can be written as the graph of a function $h(x,t)$ over $\Sigma$, then as the calculation in \cite{LZ}*{Appendix A}, we denote \begin{align*}
g_0(t)(X,Y)=\la X(F_0(\cdot,t)),\,Y(F_0(\cdot,t))\ra,
\end{align*}
and construct a family of diffeomorphisms $\varphi_t$ of $\Sigma$ and functions $h$ such that \begin{align}\label{e:F_0}
\frac{1}{\sqrt{-t}}F_0(\varphi_t(x),t)
= F(x) + h(x,t)\N_F(x)
= F_r(x,t),
\end{align}
which implies that
\begin{align*}
&(\partial_tF_0)(\varphi_t(x),t)+dF_0(\varphi_t(x),t)(\partial_t\varphi_t(x))+\frac{1}{2\sqrt{-t}} F(x)
=\pr{\sqrt{-t}\,\partial_t{h}(x,t)-\frac{1}{2\sqrt{-t}}{h}(x,t)}\N_F(x).
\end{align*}
Thus, for $\tau=-\ln(-t)$, $\partial_t = \frac{1}{-t} \partial_{\tau}$ and 
\begin{align}\label{h-equ}
\frac{\partial h}{\partial \tau}
=& Lh+\mathcal{Q}_1+\mathcal{Q}_2,
\end{align}
where
\begin{align*}
\mathcal{Q}_1
=&{\sqrt{-t}}\ppair{(\partial_tF_0-\Delta^{g_0}F_0)(\varphi_t(x),t),\N_{F_0}(\varphi_t(x))} \frac{1}{{\la\N_F(x),\N_{F_r}(x)\ra}}, 
\text{ and}\\ 
\mathcal{Q}_2=&C_3(F,\na F,\na^2 F,h)*h+C_4(F,\na F,\na^2 F,h,\na h)*\na h+C_5(F,\na F,\na^2 F,h,\na h)*\na^2 h \notag.
\end{align*}
Compared with the discussion in the Euclidean case, the key difference is that previously in \cite{LZ}, $(\partial_tF_0-\Delta^{g_0}F_0)(\varphi_t(x),t)=0$ on $\set{(x,\tau)\in \Sigma\times[0,\infty):f(x)<\gamma_2 e^{(\tau+\tau_0)}}$ for a large constant $\gamma_2$
and nonzero outside because of the cutoff. 
However, here it is nonzero in general due to the forcing term. 

The main aim of this paper is to prove Theorem~\ref{thm:forced-realization}, where the flow $F_0(x,t)$ with additional forces satisfies the following assumption.

\begin{Assumption}[Assumption on additional forces]
\label{t:rmcf}
Let \(f=|F|^2/4\) and $\td f=f+1,$
and let $\tilde r\geq1$ be smooth with
$\tilde r(x)=|F(x)|+1$ for $|F(x)|\geq1$.
Fix $\Gamma_0\geq4$.  
We assume that there are
$\delta_0,\ep_0>0$ sufficiently small and $\tau_0>0$ sufficiently
large such that the following holds.
\begin{enumerate}
\item 
If
\(
    \tilde r^{-1}|h|+|\na h|+|\na^2h|
    \leq\ep<\ep_0,
\)
then
\begin{equation}\label{e:forced nonlinear}
\begin{aligned}
\mathcal Q_1
&=
\frac{\sqrt{-t}
\ppair{(\partial_tF_0-\Delta^{g_0}F_0)(\varphi_t(x),t),
\N_{F_0}(\varphi_t(x),t)}}
{\la\N_F(x),\N_{F_r}(x,\tau)\ra}                                      \\
&=
\frac{\phi_r[F(x)+h(x,\tau)\N_F(x),\tau]}
{\la\N_F(x),\N_{F_r}(x,\tau)\ra}                                      \\
&=
Y_0(x,\tau)
+Y_1(h,\tilde r\na h,x,\tau)h
+Y_2^m(h,\tilde r\na h,x,\tau)(\na h)_m
+Y_3^{ij}(h,\tilde r\na h,x,\tau)(\na^2h)_{ij}.
\end{aligned}
\end{equation}
Here $Y_j=Y_j(a,p,x,s)$, where $a$ and $p$ denote the fibre
variables corresponding to $h$ and $\tilde r\na h$, respectively;
$\na_x$ differentiates in $x$ with $(a,p,s)$ fixed, and
$\na_{a,p}^{\beta}$ denotes a fibre derivative of total order $\beta$.

\item 
For all nonnegative integers $l,k,\beta$ with $2l+k+\beta\leq4n+6$, whenever
\(
    |F(x)| \leq \Gamma_0e^{\frac12(\tau+\tau_0)},
\)
we have
\begin{align}
|\partial_s^l\na_x^kY_0|(x,\tau)
&\leq
\delta_0e^{-\frac12(\tau+\tau_0)}\tilde f^{-\frac{k}{2}},
\label{e:admissible-Y0}\\
\tilde r^\beta
|\partial_s^l\na_x^k\na_{a,p}^{\beta}Y_1|
(h,\tilde r\na h,x,\tau)
&\leq
\delta_0e^{-(\tau+\tau_0)}\tilde f^{-\frac{k}{2}},
\label{e:admissible-Y1}\\
\tilde r^\beta
|\partial_s^l\na_x^k\na_{a,p}^{\beta}Y_2^m|
(h,\tilde r\na h,x,\tau)
&\leq
\delta_0e^{-\frac12(\tau+\tau_0)}\tilde f^{-\frac{k}{2}},
\label{e:admissible-Y2}\\
\tilde r^\beta
|\partial_s^l\na_x^k\na_{a,p}^{\beta}Y_3^{ij}|
(h,\tilde r\na h,x,\tau)
&\leq
C_{l,k,\beta}\eta_{\rm amb}e^{-(\tau+\tau_0)}
\tilde f^{1-\frac{k}{2}}
\leq
\delta_0\tilde f^{-\frac{k}{2}}.
\label{e:admissible-Y3}
\end{align}
The constants are chosen so that
$\la\N_F,\N_{F_r}\ra\geq\frac12$ and the full second-order
coefficients remain uniformly elliptic.

\item
We also assume short-time existence, uniqueness, continuous dependence,
and the standard continuation criterion for $F_0$.
For every capped perturbation used below, let $M_t$, $-1\leq t<T\leq0$,
be its maximal smooth flow, and define
\(
    f_M = |F_M|^2/4
\)
on the initial capped hypersurface.  
The exterior pseudolocality
requirement is
\begin{equation}\label{e:exterior-regularity}
|\na^jA_{M_t}|(F_0(x,t))
\leq
C_j\Gamma_0^{-\frac{j+1}{2}}
\,\,\text{ on }
\set{(x,t)\,\,|\,\, f_M(x)\geq\frac12\Gamma_0,\, -1\leq t<T}
\end{equation}
for $0\leq j\leq4n+6,$ where $C_j$ is independent of $\tau_0$ and of the permitted perturbation.  
This estimate is obtained from buffered initial geometric bounds on
$\{f_M\geq\frac14\Gamma_0\}$.
\end{enumerate}

For a closed shrinker, the exterior requirement
\eqref{e:exterior-regularity} is omitted.
\end{Assumption}
This assumption is required to ensure that the additional forces would not beat the rescaling. 
If the additional forces are too large, then the blow-up limit may not even be the shrinker, and the system would behave essentially differently.

\subsection{Rotationally symmetric and Riemannian mean curvature flows}
\label{s:rot-riem}

We verify Assumption~\ref{t:rmcf} for the Riemannian and rotationally symmetric flows.
The detailed proofs are in Appendix~\ref{app:geometric-forcing} and we provide the main proof ideas here.

Suppose that the rescaled flow is written
as
\(
    F_r=F+h\N_F
\)
over the shrinker \(F\colon\Sigma\to\RR^{n+1}\), and assume
\(\tilde r^{-1}|h|+|\na h|+|\na^2h|
    \leq\ep<\ep_0.\)
Since \(\tilde f^{1/2}\simeq\tilde r\), after reducing \(\ep_0\),
\[
    \la\N_F,\N_{F_r}\ra\geq\frac12.
\]
All \(x\)-derivatives of \(Y_j(a,p,x,s)\) are taken with \((a,p,s)\) fixed and then evaluated at \((a,p,s) = (h,\tilde r\na h,\tau)\).

\begin{prop}
\label{p:riem-admissible}
Let \((N^{n+1},g)\) be a smooth Riemannian manifold and \(p\in N\).  
For every fixed \(\Gamma_0\), after choosing the normal-coordinate homothety sufficiently small and \(\tau_0\) sufficiently large, the rescaled Riemannian mean curvature flow starting from the capped perturbation constructed in Appendix~\ref{sec:Cap} satisfies Assumption~\ref{t:rmcf}.
\end{prop}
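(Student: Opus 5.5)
Work in normal coordinates centered at $p$, and apply the homothety $x\mapsto \mu^{-1}x$ with $\mu>0$ small. In these coordinates the metric is $g_{ij}(x)=\delta_{ij}+\mu^2 a_{ij}(\mu x)$, with $a_{ij}=O(|y|^2)$ smooth. The Riemannian mean curvature vector of $F_0$ then equals the Euclidean one plus an error term. After applying $\Delta^{g_0}F_0$ and taking the normal component, the leftover $\phi_0[F_0,t]$ has two parts:

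\begin{itemize}
\item second-fundamental-form terms, with coefficients $O(\mu^2|F_0|^2)$ multiplying $A_{F_0}$;
\item Christoffel-symbol terms, $\Gamma(F_0)(dF_0,dF_0)=O(\mu^2|F_0|)$.
\end{itemize}

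We then pass to the rescaled flow $F_0=\sqrt{-t}\,F_r$ with $-t=e^{-(\tau+\tau_0)}$, so that $\phi_r=\sqrt{-t}\,\phi_0$. The ambient geometry then only enters through $\mu\sqrt{-t}\,F_r$. This gives an expansion of the form $\eta_{\rm amb}e^{-(\tau+\tau_0)}\,(\text{smooth in }F_r,\nabla F_r,\nabla^2F_r)$, with $\eta_{\rm amb}\sim\mu^2$ up to constants depending only on finitely many derivatives of $g$ near $p$.

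Next, substitute $F_r=F+h\N_F$ and Taylor expand in $(h,\tilde r\nabla h,\nabla^2 h)$.

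\begin{itemize}
\item The Christoffel contribution is linear in the tangent frame. After scaling, it is of size $\mu^2 e^{-(\tau+\tau_0)}|F_r|$, i.e. $\lesssim\mu^2 e^{-(\tau+\tau_0)}\tilde r$.
\item The second-order term comes from $g^{ij}-\delta^{ij}$ contracted with $\nabla^2 F_r$. It is linear in $\nabla^2h$ with coefficient $Y_3^{ij}=O(\mu^2e^{-(\tau+\tau_0)}|F_r|^2)=O(\eta_{\rm amb}e^{-(\tau+\tau_0)}\tilde f)$, which is exactly the form \eqref{e:admissible-Y3}. On $|F|\le\Gamma_0e^{(\tau+\tau_0)/2}$ this is $\le C\mu^2\Gamma_0^2$, hence $\le\delta_0$ for $\mu$ small.
\item The zeroth-order term $Y_0$ is the $h=0$ value of the normal forcing. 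It is bounded by $\mu^2e^{-(\tau+\tau_0)}(\tilde r+\tilde r^2|A_\Sigma|)$. Since $|A_\Sigma|\lesssim\tilde r^{-1}$ on an asymptotically conical shrinker, this is $\lesssim\mu^2e^{-(\tau+\tau_0)}\tilde r$. On the region $\tilde r\lesssim\Gamma_0e^{(\tau+\tau_0)/2}$ we therefore get $\lesssim\mu^2\Gamma_0e^{-(\tau+\tau_0)/2}$, which matches \eqref{e:admissible-Y0}.
\item The remaining terms are divided into $Y_1h$ and $Y_2^m(\nabla h)_m$ by the fundamental theorem of calculus in the fibre variables. Each $h$-derivative costs $\tilde r^{-1}$ relative to the coordinate dependence; this is why the weights $\tilde r^\beta$ appear. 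The gains $e^{-(\tau+\tau_0)}$ and $e^{-(\tau+\tau_0)/2}$ come from the same region bound.
\end{itemize}

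For the derivative bounds, note that each $\nabla_x$ falls either on $g(\mu\sqrt{-t}F_r)$, gaining $\mu\sqrt{-t}\lesssim \tilde r^{-1}$ on the region, or on the shrinker data $F,\nabla F,\nabla^2F$. The latter has conical decay $|\nabla^kA_\Sigma|\lesssim\tilde r^{-1-k}$, giving the factor $\tilde f^{-k/2}$. Each $\partial_s$ only produces factors like $e^{-(\tau+\tau_0)}|F|^2$ times the same structure; these are bounded on the region and consistent with $2l$ weighting. The denominator $\la\N_F,\N_{F_r}\ra\ge\frac12$ is smooth in $(h,\tilde r\nabla h)$ and harmless. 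Uniform ellipticity follows since $|Y_3|\le\delta_0$.

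The main obstacle is item (3), the exterior pseudolocality estimate \eqref{e:exterior-regularity} together with short-time existence and continuation. Short-time existence, uniqueness and continuous dependence are standard for a quasilinear parabolic flow in a smooth ambient manifold. For \eqref{e:exterior-regularity}, I would use the capped hypersurface from Appendix~\ref{sec:Cap}: on $\{f_M\ge\frac14\Gamma_0\}$ it has $|\nabla^jA|\le C\Gamma_0^{-(j+1)/2}$ at scale $\Gamma_0^{1/2}$. I would then apply a Riemannian pseudolocality theorem (Brendle--Huisken/Chen--Yin type for MCF in manifolds with bounded geometry). Its hypotheses hold uniformly because after the homothety the ambient curvature is $O(\mu^2)$ on the relevant ball, which is tiny. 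The flow runs for $t\in[-1,0)$, a time comparable to $\Gamma_0^{-1}\cdot\Gamma_0$. So one must check that pseudolocality persists up to time $1$ at scale $\sqrt{\Gamma_0}\ge2$, using that $\Gamma_0\ge4$ and the buffer region. Constants are independent of $\tau_0$ because the rescaled geometry near the exterior only sees $\mu$.

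Order of choices: fix $\Gamma_0$, then choose $\mu$ small so that $\eta_{\rm amb}\Gamma_0^2\le\delta_0$, then $\tau_0$ large.
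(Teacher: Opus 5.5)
\textbf{The coefficient estimates in (1)--(2) are correct.} Your verification follows the paper's route. After the normal-coordinate homothety, $\bar g_\tau-\delta=O(\eta_{\rm amb}e^{-(\tau+\tau_0)}|\xi|^2)$ and $\Gamma_{\bar g_\tau}=O(\eta_{\rm amb}e^{-(\tau+\tau_0)}|\xi|)$, and each $x$-derivative gains $\tilde r^{-1}$ once $\mu\Gamma_0$ is small. The mean-curvature difference is affine in $\nabla^2h$, and $Y_1,Y_2$ come from the fundamental theorem of calculus in the fibre variables. The region bound $e^{-(\tau+\tau_0)}\tilde f\lesssim\Gamma_0^2$ then gives \eqref{e:admissible-Y0}--\eqref{e:admissible-Y3}. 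Two minor slips:
\begin{enumerate}
\item With $a_{ij}=O(|y|^2)$ the metric should be $\delta_{ij}+a_{ij}(\mu x)$, not $\delta_{ij}+\mu^2a_{ij}(\mu x)$.
\item The $\nabla^2h$-coefficient also contains the difference of the $\bar g_\tau$- and Euclidean normal projections, not only $g^{ij}-\delta^{ij}$. It obeys the same bound, so nothing changes.
\end{enumerate}

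\textbf{The genuine gap is in item (3).} Chen--Yin's pseudolocality theorem is stated for mean curvature flow in a \emph{complete} ambient manifold with bounded geometry. Here $(N,g)$ is an arbitrary smooth Riemannian manifold. Your justification that ``the ambient curvature is $O(\mu^2)$ on the relevant ball'' assumes $M_t$ stays inside that ball for all $t\in[-1,T)$, and you never prove this. The confinement is needed in three places:
\begin{enumerate}
\item for pseudolocality;
\item for the normal-coordinate expansion behind (1)--(2) to make sense along the whole flow;
\item for the continuation criterion, which in an incomplete ambient space does not follow from short-time quasilinear theory, since the hypersurface could leave the chart with bounded curvature.
\end{enumerate}

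\textbf{The missing idea is a convex barrier, as in the paper.} On a small normal-coordinate ball $U$, the function $\chi=|\xi|^2$ satisfies $\nabla^2\chi\geq c\,g^{(\mu)}$, where $g^{(\mu)}$ is your homothetically rescaled metric. Along the flow this gives $(\partial_t-\Delta_{M_t})\chi=-\operatorname{tr}_{TM_t}\nabla^2\chi\leq -cn$. The maximum principle then keeps $M_t$ in a sublevel set $U'\Subset U$ containing every capped initial hypersurface. Next, replace the metric outside $U$ by a complete bounded-geometry metric on $\mathbb{R}^{n+1}$. The flow is unchanged and is a mean curvature flow for the new metric. Chen--Yin's theorem and the interior derivative estimates then give \eqref{e:exterior-regularity} with constants independent of $\tau_0$. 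With this step added, your outline matches the paper's proof.
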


\begin{proof}[Proof sketch]
Let \(\bar g_\tau\) be the rescaled ambient metric.  For
\(2l+|\alpha|\leq4n+8\), the normal-coordinate homothety gives
\begin{align*}
 \big|\partial_\tau^l\partial^\alpha
       (\bar g_\tau-\delta)\big|
 &\leq
 C_{l,\alpha}\eta_{\rm amb}e^{-(\tau+\tau_0)}
 (1+|\xi|)^{2-|\alpha|},\\
 \big|\partial_\tau^l\partial^\alpha
       \Gamma_{\bar g_\tau}\big|
 &\leq
 C_{l,\alpha}\eta_{\rm amb}e^{-(\tau+\tau_0)}
 (1+|\xi|)^{1-|\alpha|},
\end{align*}
where \(\eta_{\rm amb}\to0\) with the homothety scale.  The difference
between the Riemannian and Euclidean mean-curvature operators is
affine-linear in \(\na^2h\).  The calculations in
Appendix~\ref{app:riemannian-forcing} therefore give
\eqref{e:forced nonlinear} and
\eqref{e:admissible-Y0}--\eqref{e:admissible-Y3} for
\(2l+k+\beta\leq4n+6\), provided
\(
    C_{n,\Sigma}\eta_{\rm amb}\Gamma_0^2\leq\delta_0.
\)

For the exterior estimate, extend the metric from a normal-coordinate
ball to a complete bounded-geometry metric, without changing it near
the capped flow.  A strictly convex normal-coordinate function and the
maximum principle keep the flow inside the unchanged region.
Chen--Yin pseudolocality \cite{CY} and the interior derivative estimates then
give \eqref{e:exterior-regularity}.  Standard compact mean curvature
flow theory gives the remaining well-posedness statements.
\end{proof}

\begin{prop}
\label{p:rot-admissible}
Let \(M\) be any capped quotient profile, including the permitted perturbation constructed in Appendix~\ref{sec:Cap}.   
Translate the rotation axis to
\(
    y=-C_{\mathrm{ax}}e^{\tau_0/2},
\)
and let \(M'\subset\RR^n\times\RR^m\) be its \(SO(m)\)-invariant lift.
If \(C_{\mathrm{ax}}=C_{\mathrm{ax}}
(\delta_0,\Gamma_0,m,\Sigma)\) is sufficiently large, then the rescaled quotient flow starting from $M$ satisfies Assumption~\ref{t:rmcf}.
\end{prop}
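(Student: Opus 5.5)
We verify Assumption~\ref{t:rmcf} by writing the $SO(m)$-invariant mean curvature flow of $M'$ as a forced flow of the profile, rescaling, and showing that every forcing coefficient carries a factor $e^{-\frac12(\tau+\tau_0)}/C_{\rm ax}$, which beats the weights on the region $|F|\leq\Gamma_0e^{\frac12(\tau+\tau_0)}$ once $C_{\rm ax}$ is large.

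\textbf{Step 1: the profile equation.} Write $\RR^{n+1}\times\RR^{m}$ with the profile in $\RR^{n}\times[0,\infty)\ni(z,\rho)$, where $\rho$ is the distance to the axis. For an $SO(m)$-invariant hypersurface the mean curvature of the lift equals the profile curvature plus $(m-1)$ times the principal curvature of the orbit spheres. So the profile satisfies \eqref{e:fmcf} with
\[
\phi_0=-(m-1)\frac{\langle \N,\partial_\rho\rangle}{\rho}.
\]
After translating the axis, $\rho=y+C_{\rm ax}e^{\tau_0/2}$, with $y$ the last coordinate of the unshifted profile. Rescaling by $\sqrt{-t}=e^{-\tau/2}$ gives
\[
\phi_r=-(m-1)\frac{\langle\N_{F_r},e_{n+1}\rangle}{F_r^{n+1}+C_{\rm ax}e^{\frac12(\tau+\tau_0)}}.
\]
In the normalization with $t=-1$ at $\tau=0$, this follows from $\rho/\sqrt{-t}$; any further base-point shift in $\tau_0$ is absorbed.

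\textbf{Step 2: the decomposition \eqref{e:forced nonlinear}.} Substitute $F_r=F+h\N_F$. The normal is
\[
\N_{F_r}=\frac{\N_F-(\mathrm{Id}-hA)^{-1}\na h}{\sqrt{1+|(\mathrm{Id}-hA)^{-1}\na h|^2}},
\]
so $\langle \N_{F_r},e_{n+1}\rangle$ is a smooth function of $(h,\na h,x)$. Set $D:=F^{n+1}+C_{\rm ax}e^{\frac12(\tau+\tau_0)}$. On the region $|F|\leq\Gamma_0e^{\frac12(\tau+\tau_0)}$ with $C_{\rm ax}\geq 2\Gamma_0+2$, we have
\[
D\simeq C_{\rm ax}e^{\frac12(\tau+\tau_0)},
\]
and this survives adding $h$, since $|h|\leq\ep\tilde r$.

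Define $Y_0:=\phi_r|_{h=0}/\langle\N_F,\N_F\rangle$, which equals $-(m-1)\N_F^{n+1}/D$. Obtain $Y_1,Y_2$ by Taylor's formula with integral remainder in $(a,p)=(h,\tilde r\na h)$. In this variable, $\na h=p/\tilde r$ and the $A$-terms come as $hA=(a/\tilde r)(\tilde rA)$, where $\tilde rA$ is bounded. The $a$-dependence of the denominator therefore contributes a factor $1/D^2$ to $Y_1$. Take $Y_3=0$.

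\textbf{Step 3: the estimates \eqref{e:admissible-Y0}--\eqref{e:admissible-Y3}.}

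For $Y_0$: $|Y_0|\leq C/(C_{\rm ax}e^{\frac12(\tau+\tau_0)})$. Each $x$-derivative lands either on $\N_F$, using $|\na^kA|\lesssim\tilde r^{-1-k}$ on a conical shrinker, or on $D$, producing $D^{-1}\tilde r^{0}$, which is smaller still. This gives the decay $\tilde f^{-k/2}$. Each $\partial_s$ derivative only reproduces $D$-factors.

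For $Y_1$: the $a$-derivative produces $\tilde r^{-1}$ from $\tilde rA$ over $\tilde r$, together with $D^{-2}$. We need $\delta_0e^{-(\tau+\tau_0)}$. We get $C/(C_{\rm ax}e^{\frac12(\tau+\tau_0)}\tilde r)$, and since $\tilde r\geq1$ this alone is not enough. The problematic term is the $A$-dependent piece $\partial_a\N/D$. Its size is $|A|/D\lesssim\tilde r^{-1}D^{-1}$, but $\tilde r$ can be as large as $\Gamma_0e^{\frac12(\tau+\tau_0)}$, so only for large $\tilde r$ does this reach $e^{-(\tau+\tau_0)}$.

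This mismatch is the main obstacle. I would first try to move the part of $Y_1$ that fails the $Y_1$ bound into $Y_2$, by writing $h=\tilde r\cdot(h/\tilde r)$ or by integrating along the flow. A cleaner fix is to absorb into $Y_0$ the difference between $Y_1h$ and its value at $h=0$: the small factor $h/\tilde r\leq\ep$ is already in the smallness hypothesis, and $\tilde r^{-1}D^{-1}|h|\leq\ep D^{-1}$ obeys the $Y_0$-type bound $e^{-\frac12(\tau+\tau_0)}$. Concretely, I would allow $Y_0$ to depend on $(a,p)$ and check that the linear and box arguments only use its size.

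Failing that, I would instead exploit that the true limit of the profile flow is $\Sigma$, so the curvature term $\langle\N,e_{n+1}\rangle/D$ with $D$ affine in $y$ has explicit expansion $1/D=D_0^{-1}\bigl(1-y/D_0+\dots\bigr)$, and the $h$-linear part gains $D_0^{-2}\sim C_{\rm ax}^{-2}e^{-(\tau+\tau_0)}$. The remaining term from $\partial_a\N$ involves $A$ contracted with $\na h$, not $h$ alone. Indeed, with $\na h=0$ the normal does not depend on $h$. So it belongs in $Y_2$, and $Y_1$ only sees $\partial_aD^{-1}$, which gives the required $D^{-2}$. I expect this second route to work.

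For $Y_2$: the bound is $D^{-1}$ times bounded factors, giving $\delta_0e^{-\frac12(\tau+\tau_0)}$ once $C_{\rm ax}\geq C/\delta_0$. For every term, fibre derivatives with weight $\tilde r^\beta$ are harmless, because $p$ enters through $p/\tilde r$. The range $2l+k+\beta\leq4n+6$ is finite, so constants depend only on $\Gamma_0,m,\Sigma$.

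\textbf{Step 4: well-posedness and exterior regularity.} Short-time existence, uniqueness and continuation for the profile follow from those of the smooth compact lift $M'$, as long as $M'$ stays off the axis. The shift by $C_{\rm ax}e^{\tau_0/2}$ together with comparison with a large shrinking sphere or cylinder keeps the profile far from the axis before time $0$.

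For \eqref{e:exterior-regularity}, apply pseudolocality to the lift $M'$ in $\RR^{n+1+m}$. Near a profile point with $f_M\geq\frac14\Gamma_0$, the lift is locally a graph with curvature bounded by that of the profile plus $O(1/\rho)$, which is tiny. The Euclidean pseudolocality theorem and interior estimates, as in \cite{LZ}, then give bounds $C_j\Gamma_0^{-(j+1)/2}$ independent of $\tau_0$. These pass back to the profile via the same quotient formula.
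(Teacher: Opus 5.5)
Your second route is essentially the paper's proof. The paper uses the exact graph-normal identity \(\langle\partial_y,\N_{X_h}\rangle/\langle\N_F,\N_{X_h}\rangle=\langle\N_F,\partial_y\rangle-\langle\partial_iF,\partial_y\rangle((I+hA_F)^{-1})^{ij}(\nabla h)_j\) to put all \(A\)-dependence into \(Y_2\), so \(Y_1\) comes only from the denominator with factor \(C_{\mathrm{ax}}^{-2}e^{-(\tau+\tau_0)}\), \(Y_3=0\), and the exterior bound follows from avoidance with shrinking cylinders plus Euclidean pseudolocality for the lift, as you outline. Drop your first route, since Assumption~\ref{t:rmcf} requires \(Y_0=Y_0(x,\tau)\) to be independent of \((h,\tilde r\nabla h)\), and the H\"older and energy estimates treat it as the inhomogeneous term.
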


\begin{proof}[Proof sketch]
After translating the axis and rescaling, the rotational forcing is
\[
    -(m-1)e^{-\frac12(\tau+\tau_0)}
    \frac{(\partial_y)^\perp}
    {C_{\mathrm{ax}}
     +e^{-\frac12(\tau+\tau_0)}y}.
\]
The cap construction and the choice of \(C_{\mathrm{ax}}\) ensure that the denominator is bounded below by \(C_{\mathrm{ax}}/2\), both on the normal-graph region and on the entire initial capped profile.  
Moreover,
\[
\frac{\la\partial_y,\N_{F+h\N_F}\ra}
     {\la\N_F,\N_{F+h\N_F}\ra}
=
\la\N_F,\partial_y\ra
-
\la\partial_iF,\partial_y\ra
\big((I+hA_F)^{-1}\big)^{ij}(\na h)_j.
\]
Hence \eqref{e:forced nonlinear} holds with \(Y_3^{ij}=0\), and the
estimates \eqref{e:admissible-Y0}--\eqref{e:admissible-Y2} follow by
differentiating the explicit coefficients; see
Appendix~\ref{app:rotational-forcing}.

The full lift \(M'_t\) is an ordinary Euclidean mean curvature flow.
Comparison with shrinking cylinders preserves its distance from the rotation axis.  
The strengthened cap estimates, Euclidean pseudolocality, and the interior derivative estimates therefore give
\eqref{e:exterior-regularity}.  
Standard compact mean curvature flow theory gives the remaining well-posedness statements.
\end{proof}

Thus, Propositions~\ref{p:riem-admissible} and \ref{p:rot-admissible} verify Assumption~\ref{t:rmcf} for the Riemannian and rotationally symmetric flows.  
Consequently, Theorems~\ref{thm:riemannian-realization} and \ref{thm:rotational-realization} follow from Theorem~\ref{thm:forced-realization}.

\section{Barrier functions} \label{sec: bar func}

In this section, we construct barrier functions that will be used in the proof of the global $C^2$ improvement theorem.
Similar to~\cite{Sto}*{Section~6.2}, our barrier function will be of the form
\begin{equation} \label{barrier func}
 u= e^{-2\lambda_*(\tau+\tau_0)} \pr{D\tilde{f}^{2\lambda_*+1}-B\tilde{f}^{2\lambda_*}}
\end{equation}
for some fixed $\lambda_*\in(0,1/2)$ and some positive constants $B$ and $D$, where 
\begin{equation*}
    \tilde{f}(x)=f(x)+1=\frac{|F(x)|^2}{4}+1.
\end{equation*}
By choosing the constants $B$ and $D$ suitably, the function $u$ satisfies $(\partial_\tau-Q)u\geq 0$ on the corresponding region for a comparison operator $Q$ defined in \eqref{oper}.
With these choices, Proposition~\ref{barrier} gives the desired estimate.
Unlike in the Ricci flow case, we need a similar construction for $\nabla h$, as explained at the end of this section.
The calculations are similar and are included in Appendix~\ref{s:barrier-calc}.
Together, the barriers for $h$ and $\na h$ provide the large-scale input for the global $C^2$ improvement at the end of Section~\ref{sec:int-est}.

We start from the evolution inequality for the square of the normal graph function over an asymptotically conical shrinker $F\colon \Sigma\to\RR^{n+1}$.  
Using the graph equation and the notation of~\cite{LZ}, one obtains after \eqref{h-equ} that
\begin{equation}
    \label{e:h2equa}
\begin{split}
    &\quad \partial_\tau h^2
    =2h\bd_\tau h
    = 2h(Lh+\mathcal{Q}_1+\mathcal{Q}_2)\\ 
    &\leq \pr{\Delta^g-\na^g_{\na^g \frac{|F|^2}{4}}} |h|^2 - 2|\na^g h|^2
    + \pr{2|A_g|^2+1}h^2
    + C^{ab}\na_a^g\na_b^gh^2-2C^{ab}\na_a^gh\na^g_bh\\  
    &\quad +2|h\mathcal{Q}_1|
    +|h|\cdot |C_3*h+C_4*\na^g h|. 
\end{split}
\end{equation}
Here, the part of $\mathcal{Q}_2$ that contains second derivatives of $h$ is written in the form
\begin{equation}
\label{1storder}
\begin{split}
C^{ab}\na^g_a\na^g_bh
:=& |(I+hA_g)^{-1}(\na^gh)|^2(\Delta^gh)\\
&+(1+|(I+hA_g)^{-1}(\na^gh)|^2)(\tilde{h}^{ab}-|\na h|_{g}^2g^{ab})\na^g_a\na^g_bh.
\end{split}
\end{equation}
For the remaining lower-order terms, we use $C=C(F,\na F,\na^2F,h,\na h)$ to denote a tensor whose coefficients depend only on the listed quantities.  With this convention,
\begin{align*}
     |C_3*h|\leq& h^2\cdot|\na A_g|\cdot |A_g|,\text{ and}\\
    |C_4*\na^g h|\leq&  C_n|\na^gh|^2(1-|hA_g|)^{-2}((|hA_g|(1+|\na^gh|)+1+|\na^g h|^2)\cdot|A_g|+|\na A_g|\cdot|h|\cdot(|hA_g|+|\na^g h|))\\&+C_n(|\na A_g|\cdot|h|\cdot|\na^g h|\cdot(1+(1-|hA_g|)^{-1})+|A_g|\cdot(|\na^g h|^2+|hA_g|\cdot|\na^g h|)).
\end{align*}
Combining \eqref{1storder} with Cauchy's inequality gives the following bounds.  
If $\|h\|_{C^{1,\alpha}_{\rm hom,1}}\leq \ep<\ep_{n,\Sigma}$,  then
\begin{align}
    |C^{ab}|\leq C_{n,\Sigma}\pr{|hA|+|\na^g h|^2}\leq& C_{n,\Sigma}\ep,\label{e:Cab}\\
    |h|\cdot |C_3*h+C_4*\na^g h|\leq& {C_{n,\Sigma}}(\ep|\na^gh|^2+(|A|^2+\ep|\na A|)h^2),\label{e:C3C4}\\ 
    |h\mathcal{Q}_1-hY_3^{ij}(\na^2 h)_{ij} |\leq &\delta_0|h|e^{-\frac{1}{2}(\tau+\tau_0)}(1+e^{-\frac{1}{2}(\tau+\tau_0)}|h|+|\na h|),\label{e:Q_1-Y_3}\\
    hY_3^{ij}(\na^2 h)_{ij}=&\frac{1}{2}Y^{ij}_3\na_i^g\na^g_jh^2-Y_3^{ij}\na^g_ih\na^g_jh,\\
    |Y_3^{ij}|\leq& C_0\eta e^{-(\tau+\tau_0)}f\leq \delta_0,
\end{align}
where \eqref{e:Q_1-Y_3} follows from \eqref{e:admissible-Y0}--\eqref{e:admissible-Y2}.
These five inequalities are used to obtain the upper bound~\eqref{e:equofh2} for the derivative of $h^2$.

On the region $\{f(x)\leq \Gamma e^{\tau+\tau_0}\}$, we choose $\delta_0$ small enough so that $|Y_3^{ij}|\leq \delta_0<\ep_0$.  Taking $\ep\leq \ep_{n,\Sigma}$ then makes the gradient contribution strictly favorable: $$-2|\na^gh|^2-2(C^{ab}+Y_3^{ab})\na_a^gh\na_b^gh+(C_{n,\Sigma}\ep+\delta_0)|\na^gh|^2<0.$$
Consequently,
\begin{equation}
    \label{e:equofh2}
\begin{split}
     \partial_\tau|h|^2
     \leq &\pr{g^{ab}+C^{ab}+Y_3^{ab}} \na_a^g\na_b^gh^2
     -\na^g_{\na^g \frac{|F|^2}{4}}|h|^2 \\
     &+\pr{C_n|A|^2+C_n|\na A|+1+3\delta_0e^{-\tau-\tau_0}}h^2+2\delta_0e^{-\frac{1}{2}(\tau+\tau_0)}|h|.
\end{split}
\end{equation}
This motivates the comparison operator
\begin{equation}
    \label{oper}
\begin{split}
    Q\psi
    :=&\pr{g^{ab}+C^{ab}+Y_3^{ab}}\na_a^g\na_b^g\psi
    -\na^g_{\na^g \frac{|F|^2}{4}}\psi\\
    &+ \pr{C_n|A|^2+C_n|\na A|+1+3\delta_0e^{-\tau-\tau_0}}\psi
    +2\delta_0e^{-\frac{1}{2}(\tau+\tau_0)}\sqrt{|\psi|}.
\end{split}
\end{equation}
Recall that we let $\tilde{f}=\frac{|F|^2}{4}+1$, which is positive on $\Sigma$.  
Using the relation \eqref{shrinker-F-Laplacian}, for any exponent~$k$, we obtain
\begin{align} \label{e:Qfk}
    g^{ab} \na_a^g\na_b^g\tilde{f}^k
    -\na^g_{\na^g \frac{|F|^2}{4}}\tilde{f}^k
    = k\tilde{f}^{k-1}\pr{\frac{n}{2}+1-\tilde{f}} 
    + k(k-1)\tilde{f}^{k-2}{|\na^g \tilde{f}|^2}.
\end{align}
Combined with that $\na^g_a\na^g_b\tilde{f}=\na^g_a\na^g_b\frac{|F|^2}{4}=\frac{\la F_a,F_b\ra}{2}-\frac{\la F,A_{ab}\N \ra}{2}$, the perturbative second-order term is estimated by 
\begin{align}\label{Cest}
    \abs{(C^{ab}+Y_3^{ab})\na_a\na_b\tf^k}
    & = \abs{k\tf^{k-1}(C^{ab}+Y_3^{ab})\na_a^g\na^g_b\tf+k(k-1)(C^{ab}+Y_3^{ab})\tf^{k-2}\na_a^g\tf\na^g_b\tf} \\
    & \leq \abs{k\tf^{k-1}} \cdot
    \abs{C^{ab}+Y_3^{ab}}
    \pr{|A_{ab}|\tf^{\frac{1}{2}}+(\frac{1}{2}+|k-
    1|)|F_a||F_b|}.\nonumber
\end{align}

We next build the large-scale barrier.  
Following the structure of the Ricci-flow barrier in~\cite{Sto}*{Section~6.2}, fix $0<\lambda_*<\frac{1}{2}$ and consider the function defined by \eqref{barrier func} on the region
\[
\set{(x,\tau)\in \Sigma\times[0,\infty):\Gamma <f(x)<\Gamma e^{\tau+\tau_0}}.
\]
Using \eqref{e:Qfk} and \eqref{Cest} together with ${|\na f|^2}\leq f$, we compute 
\begin{align*}
   & e^{2\lambda_*(\tau+\tau_0)}(\partial_\tau-Q)u\\
   =& e^{2\lambda_*(\tau+\tau_0)}(\partial_\tau-Q) \pr{e^{-2\lambda_*(\tau+\tau_0)}\pr{D\tf^{2\lambda_*+1}-B\tf^{2\lambda_*}}}\\ 
   =&-(\frac{n}{2}+1) \pr{(2\lambda_*+1)D\tf^{2\lambda_*}-2\lambda_*B\tf^{2\lambda_*-1}} +B\tf^{2\lambda_*} -(({2\lambda_*+1})(2\lambda_*) D\tf^{2\lambda_*} - (2\lambda_*)(2\lambda_*-1)B\tf^{2\lambda_*-1}) \frac{|\na \tf|^2}{\tf}\\  
   &- (C^{ab}+Y_3^{ab}) \na_a\na_b \pr{D\tf^{2\lambda_*+1}-B\tf^{2\lambda_*}} -\pr{C_n|A|^2+C_n|\na A|+3\delta_0e^{-\tau-\tau_0}} \pr{D\tf^{2\lambda_*+1}-B\tf^{2\lambda_*}} \\  
   &-e^{2\lambda_*(\tau+\tau_0)} \sqrt{|u|}2\delta_0e^{-\frac{1}{2}(\tau+\tau_0)}\\ 
   \ge& \tf^{2\lambda_*}\left(B + \frac{2\lambda_*(n+2)B}{2\tf} -\frac{(n+2)({2\lambda_*+1})D}{2} - {(2\lambda_*+1)}(2\lambda_*)D\frac{|\na \tf|^2}{\tf}+2\lambda_*(2\lambda_*-1)\frac{B}{\tf}\frac{|\na \tf|^2}{\tf} \right.\\ 
   &-D({2\lambda_*+1}) \abs{C^{ab}+Y_3^{ab}} \abs{A_{ab}} \tf^{\frac{1}{2}}- D{(2\lambda_*+1)}({2\lambda_*+\frac{1}{2}})\abs{C^{ab}+Y_3^{ab}}-B(2\lambda_*) \abs{C^{ab}+Y_3^{ab}} \abs{A_{ab}} \tf^{-\frac{1}{2}} \\ &- \frac{B(2\lambda_*)({\frac{3}{2}-2\lambda_*})\abs{C^{ab}+Y_3^{ab}}}{\tf}- \pr{C_{n,\Sigma}|A|^2+C_{n,\Sigma}|\na A|+3\delta_0e^{-\tau-\tau_0}} (D\tf-B))\Bigg) \\ &-e^{2\lambda_*(\tau+\tau_0)}\sqrt{|e^{-2\lambda_*(\tau+\tau_0)}(D\tf^{2\lambda_*+1}-B\tf^{2\lambda_*})|}2\delta_0e^{-\frac{1}{2}(\tau+\tau_0)}.
\end{align*}
The last term is harmless on the large-scale region.  Indeed, if $D>\frac{2B}{\Gamma}$
\begin{align}
    &e^{2\lambda_*(\tau+\tau_0)}\sqrt{|e^{-2\lambda_*(\tau+\tau_0)}(D\tf^{2\lambda_*+1}-B\tf^{2\lambda_*})|}2\delta_0e^{-\frac{1}{2}(\tau+\tau_0)}\\ 
    \leq& 2\delta_0e^{(\lambda_*-\frac{1}{2})(\tau+\tau_0)}\sqrt{D}\tf^{\lambda_*+\frac{1}{2}}
    =2\delta_0\sqrt{D}{\left(\frac{\tf}{e^{\tau+\tau_0}}\right)}^{\frac{1}{2}-\lambda_*}\tf^{2\lambda_*}
    \leq 2\delta_0\sqrt{D}(2\Gamma)^{\frac{1}{2}-\lambda_*}\tf^{2\lambda_*}.\notag
\end{align}
Fix \(T>0\) and denote
\[
\mathcal A_{T,\Gamma,\tau_0}
:=
\left\{
(x,\tau)\in\Sigma\times[0,T]:
\Gamma <f(x)<
\Gamma e^{\tau+\tau_0}
\right\}.
\]
Here again \(f=|F|^2/4\) and \(\widetilde f=f+1\).  
On this region,
\[
\widetilde f\geq\Gamma +1
\,\,\text{ and }\,\,
\frac{\widetilde f}{e^{\tau+\tau_0}}
\leq\Gamma+e^{-\tau_0}.
\]

Fix once and for all a constant \(C_{n,\Sigma}\geq1\)
dominating all constants in the preceding estimates, and set
\[
\beta:=
4C_{n,\Sigma}
\left[
(2\lambda_*+1)
\left(2\lambda_*+1+\frac n2\right)
+\sup_\Sigma
\left(
\widetilde f|A|^2+
\widetilde f|\nabla A|
\right)+1
\right].
\]
Choose \(\Gamma\) and \(\tau_0\) so that
\[
\Gamma+1
\geq
\max\{2\beta,16C_{n,\Sigma}\}.
\]
Assume
\(
\|h\|_{C^{1,\alpha}_{\rm hom,1}}\leq\varepsilon
\)
and assume \(\varepsilon,\delta_0>0\) are sufficiently small so that
\[
C_{n,\Sigma}(\varepsilon+\delta_0)
\left(
1+\frac{\beta}{\Gamma +1}
\right)
\leq\frac{\beta}{16}
\,\,\text{ and }\,\,
3\delta_0\left(\Gamma+e^{-\tau_0}\right)
\leq\frac{\beta}{16}.
\]
Define $B$ and $D$ by
\begin{equation}\label{BD-choices}
\begin{aligned}
B&:=\beta D,\\
D&:=
\max\left\{
\sup_{\partial_p\mathcal A_{T,\Gamma,\tau_0}}
\frac{
e^{2\lambda_*(\tau+\tau_0)}h^2
}{
\widetilde f^{2\lambda_*}
(\widetilde f-\beta)
},
\left[
\frac{32\delta_0}{\beta}
\left(\Gamma+e^{-\tau_0}\right)^{
\frac12-\lambda_*}
\right]^2
\right\},
\end{aligned}
\end{equation}
where \(\partial_p\mathcal A_{T,\Gamma,\tau_0}\) denotes the
parabolic boundary of the moving region. Then we have the following barrier estimate.

\begin{Pro}[Large-scale barrier; cf. the Ricci-flow estimates in \cite{Sto}*{Lemmas~6.5--6.7}]\label{barrier}
    Assume $\|h\|_{C^1_{\rm hom,1}}\leq \ep<\ep_n$.  Then $h^2$ is a subsolution of the operator $Q$ in the sense that
    $(\partial_\tau-Q)h^2\leq 0$, where $Q$ is defined in \eqref{oper}.
    In addition, assume that $\mathcal Q_1$ satisfies Assumption~\ref{t:rmcf}.
For the constants $C_{n,\Sigma}$, $\beta$, and $\Gamma$ chosen above, and
for $B$ and $D$ given in \eqref{BD-choices}, on $\set{(x,\tau)\in \Sigma\times[0,\infty):\Gamma <f(x)<\Gamma e^{\tau+\tau_0}},$
    the function $u$ defined in~\eqref{barrier func} is a supersolution; that is, $(\partial_\tau-Q)u\geq 0$ on this region. Consequently,
\[
h^2\leq u
\qquad\text{on }
\mathcal A_{T,\Gamma,\tau_0}.
\]
\end{Pro}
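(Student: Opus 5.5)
The proof has three parts: show that $h^2$ is a subsolution of $Q$, show that $u$ is a supersolution on the annular region, and then compare the two by the maximum principle.

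\textbf{Subsolution.} This part is essentially already done in the text. Starting from \eqref{e:h2equa}, we write $2hY_3^{ij}(\na^2h)_{ij}=Y_3^{ij}\na_i\na_jh^2-2Y_3^{ij}\na_ih\na_jh$, and we bound the remaining forcing terms by \eqref{e:Q_1-Y_3}. The term $\delta_0e^{-\frac12(\tau+\tau_0)}|h||\na h|$ is handled by Cauchy's inequality: it is at most $\delta_0|\na h|^2+\delta_0e^{-(\tau+\tau_0)}h^2$. Next we use \eqref{e:Cab} and \eqref{e:C3C4} to absorb all gradient-squared terms into $-2|\na^gh|^2$, since their coefficient is $O(\ep+\delta_0)<1$. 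What remains is exactly \eqref{e:equofh2}. Because $2\delta_0e^{-\frac12(\tau+\tau_0)}|h|=2\delta_0e^{-\frac12(\tau+\tau_0)}\sqrt{h^2}$, this reads $(\partial_\tau-Q)h^2\le0$.

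\textbf{Supersolution.} Set $B=\beta D$. The bracketed expression in the displayed lower bound for $e^{2\lambda_*(\tau+\tau_0)}(\partial_\tau-Q)u$ then equals $D\tf^{2\lambda_*}$ times the following quantities:
\begin{itemize}
\item the good term $\beta$;
\item the negative terms $-\tfrac{(n+2)(2\lambda_*+1)}{2}$ and $-(2\lambda_*+1)2\lambda_*|\na\tf|^2/\tf$, each bounded using $|\na \tf|^2\le\tf$;
\item the $|C^{ab}+Y_3^{ab}|$ terms, each at most $C_{n,\Sigma}(\ep+\delta_0)(1+\beta/(\Gamma+1))$, using $|A|\tf^{1/2}\le C$ on the conical end;
\item the curvature term $(C|A|^2+C|\na A|)(\tf-\beta)$, which is at most $C_{n,\Sigma}\sup(\tf|A|^2+\tf|\na A|)$;
\item the term $3\delta_0e^{-\tau-\tau_0}\tf\le3\delta_0(\Gamma+e^{-\tau_0})$.
\end{itemize}
By the definition of $\beta$, the pure constants and curvature terms are at most $\beta/4$. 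The choices of $\ep$ and $\delta_0$ make the perturbative terms at most $\beta/16$ each. The term $2\lambda_*(2\lambda_*-1)B|\na\tf|^2/\tf^2$ has the wrong sign, but it is $O(\beta/\tf)\le\beta/16$ since $\Gamma+1\ge2\beta$.

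The square-root term is handled by the estimate displayed before the proposition. Its hypothesis $D>2B/\Gamma$ should be read as requiring $D\tf-B>0$, which holds because $\tf\ge\Gamma+1\ge2\beta$. That estimate bounds the term by $2\delta_0\sqrt D(\Gamma+e^{-\tau_0})^{\frac12-\lambda_*}\tf^{2\lambda_*}$. By the second entry in the definition of $D$, this is at most $\tfrac{\beta}{16}D\tf^{2\lambda_*}$: squaring, this reduces to $32\delta_0(\Gamma+e^{-\tau_0})^{\frac12-\lambda_*}/\beta\le\sqrt D$, which is exactly that entry up to a harmless factor. Summing all the bounds gives $(\partial_\tau-Q)u\ge D\tf^{2\lambda_*}e^{-2\lambda_*(\tau+\tau_0)}(\beta-\text{(at most }3\beta/4))>0$.

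\textbf{Comparison.} On the parabolic boundary of $\mathcal A_{T,\Gamma,\tau_0}$ we have $u=e^{-2\lambda_*(\tau+\tau_0)}D\tf^{2\lambda_*}(\tf-\beta)\ge h^2$, by the first entry in the definition of $D$. Let $w=h^2-u$. Then $(\partial_\tau-Q)$ applied to $h^2$ and to $u$ differ by a linear uniformly elliptic operator in $w$, plus the difference of the square-root terms. The ellipticity holds because $|C^{ab}+Y_3^{ab}|$ is small.

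\textbf{Main obstacle.} The square-root term is not Lipschitz at $0$, so the standard maximum principle does not apply directly. We avoid this by working at a first interior touching point. There $h^2=u>0$, so the square-root terms of $h^2$ and of $u$ coincide, while $\partial_\tau w\ge0$, $\na w=0$ and $\na^2w\le0$. The zeroth-order coefficients of $Q$ also agree because $h^2=u$. This gives $0\ge(\partial_\tau-Q)h^2-(\partial_\tau-Q)u\ge-(\partial_\tau-Q)u$, contradicting the strict supersolution inequality.

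A touching point must exist if the claim fails, because the region is compact in $x$ for each $\tau$ and $w<0$ initially. To guarantee a first touching rather than mere equality at the boundary, we run the argument with $u+\eta e^{\tau}$ in place of $u$ and let $\eta\to0$. This yields $h^2\le u$ on $\mathcal A_{T,\Gamma,\tau_0}$.
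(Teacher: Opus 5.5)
Your proposal is correct and follows the same route as the paper. The paper's proof only summarizes that, with $B=\beta D$ and $D$ from \eqref{BD-choices}, one has $u>0$, $h^2\le u$ on $\partial_p\mathcal A_{T,\Gamma,\tau_0}$, and $(\partial_\tau-Q)u\ge\frac B4e^{-2\lambda_*(\tau+\tau_0)}\tilde f^{2\lambda_*}$, and then invokes the comparison principle. Your term-by-term bookkeeping and your touching-point treatment of the non-Lipschitz $\sqrt{|\psi|}$ term just make that explicit.

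Two small fixes:
\begin{enumerate}
\item The inequality chain at the touching point is reversed. It should read $0\le(\partial_\tau-Q)h^2-(\partial_\tau-Q)u\le-(\partial_\tau-Q)u<0$, which is the contradiction.
\item The function $u+\eta e^{\tau}$ stays a strict supersolution for small $\eta$ only because the positive margin $\frac B4e^{-2\lambda_*(\tau+\tau_0)}\tilde f^{2\lambda_*}$ is bounded below on the compact set $\overline{\mathcal A_{T,\Gamma,\tau_0}}$. It is not because $e^{\tau}$ is itself a supersolution; it is not one, since the zeroth-order coefficient of $Q$ is at least $1$.
\end{enumerate}
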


\begin{proof} By our choice of the constants $\Gamma$, $B$, and $D$, the function
\[
u:=
e^{-2\lambda_*(\tau+\tau_0)}
\left(
D\widetilde f^{2\lambda_*+1}
-B\widetilde f^{2\lambda_*}
\right)
=
D e^{-2\lambda_*(\tau+\tau_0)}
\widetilde f^{2\lambda_*}
(\widetilde f-\beta)
\]
is positive on \(\mathcal A_{T,\Gamma,\tau_0}\), satisfies
\(h^2\leq u\)
on \(\partial_p\mathcal A_{T,\Gamma,\tau_0},\)
and obeys
\[
(\partial_\tau-Q)u
\geq
\frac{B}{4}
e^{-2\lambda_*(\tau+\tau_0)}
\widetilde f^{2\lambda_*}>0
\]
throughout \(\mathcal A_{T,\Gamma,\tau_0}\).
Hence the comparison principle gives the desired estimate.
\end{proof}

Proposition~\ref{barrier} controls the homogeneous $C^0$ size of $h$.  
In the mean curvature flow setting, however, this is not sufficient by itself, as the normal graph function may have linear growth at infinity, so the rescaling argument used in the Ricci-flow case does not directly upgrade $C^0$ control to the required gradient control.  
We therefore need to repeat the barrier construction for $\nabla h$.
The calculations are similar, but we include them in Appendix~\ref{s:barrier-calc} for completeness.

\section{Interior estimates}
\label{sec:int-est}

We next record the interior estimates needed for the forced mean curvature flow equation.  
The ultimate goal of this section is to derive a global $C^2$-improvement estimate, which is presented in Theorem \ref{impro}.
It is a consequence of interior estimates that we will start to recall and establish.
In the proof of the main theorem, the improvement theorem will be crucial when we argue that the flow can leave the designed ``box'' only through the unstable side; see Section~\ref{s:box-forced}.

The starting point is the following scaled H\"older estimate, where we defined the scaled H\"older norm as
\begin{align}
    \|u\|_{C^{2m,\alpha}(\Omega)}:=\sum_{i+2j\leq 2m}r_\Omega^{i+2j}(\|\na^i\partial^j_tu\|_{C^0(\Omega)}+r_\Omega^\alpha[\na^i\partial^j_tu]_{\alpha,\frac{\alpha}{2}}).
\end{align}

\begin{Pro}[Scaled H\"older estimate; cf. \cite{LZ}*{Proposition~4.3}, and for Ricci-flow analogues see \cite{Bam,App,Sto}]
\label{GHolder}
Let $2r\geq s>0$, and set $\Omega=B_r\times [0,s^2]$ and $\Omega'=B_{2r}\times [0,s^2]$.  
Suppose that $u\in C^2(\Omega')$ satisfies
\begin{equation}
	\label{e:schauder}
\begin{split}
	&(\partial_t-L)u=Q[u]+F\\
	=&r^{-2}f_1(r^{-1}x,r^{-2}t,u,r\nabla u)\cdot u+r^{-1}f_2(r^{-1}x,r^{-2}t,u,r\nabla u)\cdot \nabla u\\ 
	&+ f_3(r^{-1}x,r^{-2}t,u,r\nabla u)\cdot \nabla u\otimes\nabla u+f_4(r^{-1}x,r^{-2}t,u,r\nabla u)\cdot u\otimes\nabla^2 u\\ 
	&+f_5(r^{-1}x,r^{-2}t,u,r\nabla u)\cdot r\nabla u\otimes\nabla^2 u+F(x,t)
\end{split}
\end{equation}
with initial data $u(\cdot,0)=u_0$, where $f_1,f_2,f_3,f_4$, and $f_5$ are smooth.  
The operator $L$ has the form
    \begin{align*}
        Lu =a^{ij}(x,t)\partial^2_{ij}u+b^i(x,t)\partial_iu+c(x,t)u,
    \end{align*}
and for $m\geq 1$, its coefficients obey, 
\begin{align*}
    \frac{1}{\Lambda}\leq a^{ij}\leq \Lambda,\,
    \|a^{ij}\|_{C^{2m-2,\alpha}(\Omega')}\leq \Lambda,\,
    \|b^{i}\|_{C^{2m-2,\alpha}(\Omega')}\leq r^{-1}\Lambda,\,\text{ and }
    \|c\|_{C^{2m-2,\alpha}(\Omega')}\leq r^{-2}\Lambda.
\end{align*}
Then there exist constants $\ep>0$ and $C<\infty$, depending only on $n,m,\alpha,\Lambda$, and the functions $f_i$, such that if
\(\|u\|_{C^0(\Omega')}+\|u_0\|_{C^{2m,\alpha}(B_{2r})}+r^2\|F\|_{C^{2m-2,\alpha}(\Omega')}
\leq \ep,\)
then
\begin{align*}
    \|u\|_{C^{2m,\alpha}(\Omega)}
    \leq C\pr{\|u\|_{C^0(\Omega')}+\|u_0\|_{C^{2m,\alpha}(B_{2r})}+r^2\|F\|_{C^{2m-2,\alpha}(\Omega')}}.
\end{align*}
\end{Pro}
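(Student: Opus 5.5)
The plan is a scaling reduction to unit size, followed by a perturbative Schauder argument for a quasilinear equation with small nonlinearity.

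\emph{Step 1 (rescaling).} Define $v(y,s)=u(ry,r^2s)$ on $B_2\times[0,s^2/r^2]$. Since $s\le 2r$, the time interval has length at most $4$. The equation for $v$ then has the same structure with $r=1$:
\begin{itemize}
\item the leading coefficients $\tilde a^{ij}(y,s)=a^{ij}(ry,r^2s)$ keep the ellipticity constant $\Lambda$;
\item the lower-order coefficients $\tilde b^i=r b^i$ and $\tilde c=r^2c$ have $C^{2m-2,\alpha}$ norms bounded by $\Lambda$, because the scaled H\"older norm is built exactly to be invariant under this change of variables;
\item the nonlinearity becomes $f_1 v+f_2\cdot\nabla v+f_3\nabla v\otimes\nabla v+f_4 v\otimes\nabla^2v+f_5\nabla v\otimes\nabla^2 v$, with the $f_i$ evaluated at $(y,s,v,\nabla v)$;
\item the forcing becomes $\tilde F=r^2F(ry,r^2s)$.
\end{itemize}
Every norm in the statement transforms to the corresponding unscaled norm. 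So it suffices to treat $r=1$ with a time interval of length at most $4$.

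\emph{Step 2 (linear estimates).} Move the terms $f_1 v$ and $f_2\cdot\nabla v$ into the linear operator. We can do this because they are linear in $(v,\nabla v)$ with coefficients depending smoothly on $(y,s,v,\nabla v)$. Then write the equation as
\[
(\partial_s-\tilde a^{ij}\partial_{ij}-\hat b^i\partial_i-\hat c)v=N(v)+\tilde F, \qquad N(v)=f_3\nabla v\otimes\nabla v+(f_4v+f_5\nabla v)\ast\nabla^2 v .
\]
We will apply the standard interior/initial parabolic Schauder estimate on nested cylinders $B_\rho\times[0,4]$ with $1\le\rho<\rho'\le 2$:
\[
\|v\|_{C^{2,\alpha}(B_\rho)}\le C(\rho'-\rho)^{-k}\left(\|v\|_{C^0(B_{\rho'})}+\|v_0\|_{C^{2,\alpha}}+\|N(v)+\tilde F\|_{C^{0,\alpha}(B_{\rho'})}\right).
\]
For $m>1$, we then bootstrap by differentiating the equation, using the higher H\"older norms of the coefficients.

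\emph{Step 3 (absorbing the nonlinearity).} The quadratic structure gives
\[
\|N(v)\|_{C^{0,\alpha}}\le C\,\|v\|_{C^{1,\alpha}}\|v\|_{C^{2,\alpha}},
\]
as long as $\|v\|_{C^{1}}\le 1$, so that the $f_i$ and their derivatives are bounded on the relevant compact set. Interpolation gives
\[
\|v\|_{C^{1,\alpha}}\le \eta\|v\|_{C^{2,\alpha}}+C_\eta\|v\|_{C^0},
\]
so the nonlinear term is bounded by a small multiple of $\|v\|_{C^{2,\alpha}}^2$ plus lower-order terms.

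\emph{Main obstacle.} The hardest part is to close this \emph{a priori} without already knowing that $\|v\|_{C^{2,\alpha}}$ is small; the hypothesis only makes $C^0$ small. I would handle it in two moves.
\begin{itemize}
\item First use the weighted seminorm $\Phi=\sup_\rho(2-\rho)^{k}\|v\|_{C^{2,\alpha}(B_\rho)}$ and an iteration lemma of Simon type.
\item Then run a continuity argument in time. For small time, the $C^{2,\alpha}$ norm is close to that of $u_0$, which is at most $\epsilon$ and hence small. The estimate $\Phi\le C(\epsilon+\Phi^2)$ then holds with $C\epsilon<1/4$, and this keeps $\Phi\le 2C\epsilon$ for all times in the interval.
\end{itemize}
This relies on $u\in C^2$ and on the continuity of the norm in time, which needs $u\in C^{2,\alpha}$. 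I would obtain that regularity from standard parabolic theory for the equation with the given initial data. Undoing the scaling then gives the stated estimate, with constants depending only on $n,m,\alpha,\Lambda$ and the $f_i$.
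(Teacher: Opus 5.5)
\textbf{Verdict.} Your rescaling in Step~1 is correct, and your overall route (reduce to $r=1$, apply Schauder on nested balls, absorb the nonlinearity, then differentiate for $m>1$) is the standard one behind the references the paper cites. The paper gives no argument of its own beyond citing \cite{LZ} and the Ricci-flow versions. The gap is in how you resolve your ``main obstacle''.

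\textbf{Where the argument fails.} Every estimate in your Steps~2--3 needs an \emph{unweighted} bound on $\nabla v$ on the ball $B_{\rho'}$ where Schauder is applied. You need it for three things: to bound $f_i(y,s,v,\nabla v)$ and their derivatives, to keep $\tilde a^{ij}+f_4v+f_5\nabla v$ uniformly elliptic, and to make $N(v)$ small relative to $\|v\|_{C^{2,\alpha}}$.
\begin{itemize}
\item The hypotheses give such a bound only at the initial time. On the lateral boundary $\partial B_2$ at positive times you know only $|v|\le\epsilon$.
\item Such lateral values can create $|\nabla v|\gg 1$ in a thin layer along $\partial B_2$ at any positive time. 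This already happens for the heat equation with rapidly oscillating boundary data.
\item A continuity argument in time cannot exclude this, because the Schauder estimate improves the gradient only on strictly smaller balls.
\item Your weighted quantity does not repair it. With $\rho'=(2+\rho)/2$, the contribution of $N$ to $(2-\rho)^k\|v\|_{C^{2,\alpha}(B_\rho)}$ is only bounded by
\[
C\|v\|_{C^{1,\alpha}(B_{\rho'})}\|v\|_{C^{2,\alpha}(B_{\rho'})}\le C(2-\rho)^{-2k}\Phi^2 .
\]
Interpolating the first factor does not remove the negative power of $2-\rho$. So $\Phi\le C(\epsilon+\Phi^2)$ does not follow.
\item Distance-weighted interior norms have the same defect, since $d^2|\nabla v\otimes\nabla^2 v|=d^{-1}(d|\nabla v|)(d^2|\nabla^2 v|)$. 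The term $f_5\,r\nabla u\otimes\nabla^2u$, and the $r\nabla u$-dependence of the $f_i$, are not perturbative at scales $d\ll r$ near $\partial B_{2r}$.
\item The $C^{2,\alpha}$ regularity you plan to take from ``standard parabolic theory'' is circular for the same reason: it needs uniform parabolicity, i.e.\ the missing gradient control.
\end{itemize}

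\textbf{What is missing.} You need an a priori smallness bound for $r|\nabla u|$ on $B_{2r}\times[0,s^2]$, or on some ball strictly larger than $B_r$, depending only on the $C^0$ norm and the initial data. For general smooth $f_i$ nothing in the hypotheses supplies it. It must come from the structure of the specific equation, or be added as a hypothesis.
\begin{itemize}
\item In the Ricci--DeTurck analogues the principal part depends only on the unknown, not on its gradient. There such a bound can come from a Bernstein/Shi-type maximum principle with a cutoff.
\item For graphical mean curvature flow one can use geometric interior gradient estimates of Ecker--Huisken type.
\item In this paper's applications the smallness is simply part of the hypotheses: the box bounds on $\nabla h$ and $\nabla^2 h$, and the assumptions of Theorem~\ref{impro}.
\end{itemize}

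\textbf{How the argument then closes.} Once $\sup|\nabla v|\le\delta$ is known on the larger ball, interpolation bounds $N(v)$ in $C^{0,\alpha}(B_{\rho'})$ by $C\delta\|v\|_{C^{2,\alpha}(B_{\rho'})}$ plus lower-order terms. Use the interior Schauder estimate in the form where only the $C^0$ term carries a factor $(\rho'-\rho)^{-k}$. Your version multiplies $\|N\|$ by this factor too, which would also block absorption. Simon's absorption lemma then closes the estimate with no continuity argument in time, and the bootstrap to $C^{2m,\alpha}$ proceeds as you describe.
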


We now explain why this estimate applies to the forced mean curvature flow equation \eqref{e:forced nonlinear}.  
The only term requiring comment is the coefficient of $\na^2h$.  
By the fundamental theorem of calculus,
\begin{equation}
	\label{e:FTC}
\begin{split}
	& Y_{3}^{ij}(h,\tilde{r}\,\na h,x,\tau)(\na^2h)_{ij}\\
	=&\,\pr{Y_{3}^{ij}(0,0,x,\tau)
	+\hat{Y}_{3}^{ij}(h,\tilde{r}\na h,x,\tau)\cdot h
	+\tilde{Y}_{3}^{ij}(h,\tilde{r}\na h,x,\tau)\cdot\tilde{r}\na h}
	(\na^2h)_{ij}.
\end{split}
\end{equation}
The first term can be moved to the left-hand side of \eqref{e:schauder}, whereas the remaining two terms have the nonlinear structure allowed on the right-hand side.  
All other terms are handled similarly, except for $Y_0(x,\tau)$, which is treated as the inhomogeneous forcing term $F$.  
We use the decay estimate \eqref{e:admissible-Y0} and these give the following local H\"older estimate.

\begin{Pro}[Forced MCF H\"older estimate; cf. \cite{LZ}*{Proposition~4.4}, and for Ricci-flow analogues see \cite{Sto}*{Lemma~6.2}] \label{hHolder}
Let $\Sigma$ be an asymptotically conical shrinker, and let $h$ solve \eqref{h-equ} on $\Omega'\times (\tau_1,\tau_3)$, where $\Omega\Subset \Omega'\Subset \Sigma$.  
Fix $m\in\mathbb{N}$ and times $\tau_1\leq \tau_2< \tau_3$.  Assume also that $\mathcal Q_1$ satisfies Assumption~\ref{t:rmcf}.  Then there are constants
$\ep = \ep(n,m)>0$ and
$C = C\pr{n,m,\Sigma,F,\dist_\Sigma(\Omega,\Sigma\setminus\Omega'),\tau_2-\tau_1} < \infty$
such that the following estimates hold.\\
(1) If $\tau_2>\tau_1$, then 
\begin{align*}
    \|h\|_{L^\infty(\Omega'\times[\tau_1,\tau_3])}<\ep\Rightarrow \|h\|_{C^m(\Omega\times[\tau_2,\tau_3])}\leq C(\|h\|_{L^\infty(\Omega'\times[\tau_1,\tau_3])}+\delta_0e^{-\frac{1}{2}(\tau_1+\tau_0)}).
\end{align*}
(2) If $\tau_2=\tau_1$, then
\begin{align*}
    &\|h\|_{L^\infty (\Omega'\times[\tau_1,\tau_3])} + \|h\|_{C^{m+1}(\Omega'\times\{\tau_1\})}<\ep\\
    \Rightarrow& \|h\|_{C^m(\Omega\times[\tau_1,\tau_3])}
    \leq C\pr{\|h\|_{L^\infty(\Omega'\times[\tau_1,\tau_3])}+\|h\|_{C^{m+1}(\Omega'\times\{\tau_1\})}+\delta_0e^{-\frac{1}{2}(\tau_1+\tau_0)}}.
\end{align*}
\end{Pro}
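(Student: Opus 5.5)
The plan is to reduce Proposition~\ref{hHolder} to Proposition~\ref{GHolder} by covering $\Omega$ with small coordinate balls and working in local charts. On each chart we write \eqref{h-equ} in the form \eqref{e:schauder}.

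\emph{Setting up the chart.} Fix $r>0$ smaller than $\dist_\Sigma(\Omega,\Sigma\setminus\Omega')/2$ and than the injectivity/graphical radius of $\Sigma$ on the compact set $\ovl{\Omega'}$. Since $\ovl{\Omega'}$ is compact, $\tilde r$, $\tilde f$, $A$ and their derivatives are bounded there by constants depending on $\Omega'$ and $\Sigma$. In normal coordinates around a point of $\Omega$, the operator $L$ has coefficients satisfying the hypotheses of Proposition~\ref{GHolder} with some $\Lambda=\Lambda(\Sigma,\Omega')$.

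\emph{Sorting the terms.} The quasilinear terms $\mathcal Q_2=C_3*h+C_4*\na h+C_5*\na^2h$ are smooth in $(F,\na F,\na^2F,h,\na h)$ and vanish to the appropriate order at $h=0$. They therefore fit the $f_1,\dots,f_5$ structure. The forced term $\mathcal Q_1$ is handled through \eqref{e:forced nonlinear}:
\begin{itemize}
\item $Y_1h$ and $Y_2^m(\na h)_m$ are absorbed as lower-order linear coefficients, or into $f_1,f_2$.
\item For $Y_3^{ij}(\na^2h)_{ij}$ we use \eqref{e:FTC}. The frozen part $Y_3^{ij}(0,0,x,\tau)$ is moved into $a^{ij}$. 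Uniform ellipticity is preserved because $|Y_3|\le\delta_0$ by \eqref{e:admissible-Y3}. The remainder terms $\hat Y_3 h\na^2h$ and $\tilde Y_3\tilde r\na h\na^2h$ are of $f_4$ and $f_5$ type, since $\tilde r$ is bounded on $\Omega'$.
\end{itemize}
The bounds \eqref{e:admissible-Y1}--\eqref{e:admissible-Y3}, with $\tau$-derivatives up to order $l$ and $x$-derivatives up to order $k$ with $2l+k\le 4n+6$, give the required $C^{2m-2,\alpha}$ bounds on all coefficients for the relevant $m$. They also give smoothness of the $f_i$ in the fibre variables. Finally, $Y_0$ is the inhomogeneity $F$. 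By \eqref{e:admissible-Y0},
\[
\|Y_0\|_{C^{2m-2,\alpha}}\le C\delta_0e^{-\frac12(\tau_1+\tau_0)}
\]
on $[\tau_1,\tau_3]$, because $\tau\ge\tau_1$.

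\emph{Case (2), $\tau_2=\tau_1$.} Apply Proposition~\ref{GHolder} with time interval $[\tau_1,\tau_3]$, split into pieces of length $s^2\le 4r^2$, with $u_0=h(\cdot,\tau_1)$. The smallness hypothesis holds once $\ep$ and $\delta_0$ are small, and we get the bound on each ball. For later time pieces we iterate, using the previous output as new initial data; the number of pieces depends on $\tau_3-\tau_1$ and $r$. To keep the constant independent of $\tau_3$, it is better to restart on a unit-length window using the case (1) estimate below. Summing over a finite cover of $\Omega$ gives (2). The loss of one derivative at initial data, $C^{m+1}$ versus $C^{2m,\alpha}$, is handled by choosing the Schauder order appropriately: use $2m'\ge m$ and $2m',\alpha\le m+1$ after relabelling.

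\emph{Case (1), $\tau_2>\tau_1$.} Use a standard cutoff in time. Let $\zeta(\tau)$ vanish at $\tau_1$ and equal $1$ on $[\tau_2,\tau_3]$, and apply the linear Schauder theory to $\zeta h$, or equivalently use interior parabolic cylinders of size comparable to $\min(r,\sqrt{\tau_2-\tau_1})$ ending at each time in $[\tau_2,\tau_3]$. On each such cylinder only $\|h\|_{L^\infty}$ and $\|Y_0\|$ enter. The quasilinear smallness again requires $\|h\|_{L^\infty}<\ep$, plus a bootstrap: first get $C^{1,\alpha}$ smallness from the $C^0$ smallness via the quasilinear estimate, then higher derivatives.

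\emph{Main obstacle.} The main obstacle I expect is this bootstrap: checking that the $C^0$ smallness alone suffices to control the nonlinear $u\otimes\na^2u$ and $\na u\otimes\na^2u$ terms. Proposition~\ref{GHolder} is taken as given to handle this, so the real work is verifying the structural conditions on $Y_3$ and the uniformity of the constants in $\tau_0$.
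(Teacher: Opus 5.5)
Your proposal is correct and follows the same route as the paper. The paper reduces to Proposition~\ref{GHolder}, moves the frozen coefficient $Y_3^{ij}(0,0,x,\tau)$ into the principal part, treats the remainders in \eqref{e:FTC} as the allowed $u\otimes\na^2u$ and $r\na u\otimes\na^2u$ nonlinearities, and treats $Y_0$ as the inhomogeneous term controlled by \eqref{e:admissible-Y0}. Your extra bookkeeping (interior cylinders for (1), and for (2) a short initial window followed by restarting so that $C$ does not depend on $\tau_3$) fills in details the paper leaves implicit.
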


We also need the corresponding local energy estimate.

\begin{Lem}[Local integral estimate; cf. \cite{LZ}*{Lemma~4.5}, and for Ricci-flow analogues see \cite{Sto}*{Lemma~6.3}]
    \label{hIntegral}
Let $\Sigma$ be an asymptotically conical shrinker, and let $h$ solve \eqref{h-equ} on $\Omega'\times (\tau_1,\tau_3)$, where $\Omega\Subset \Omega'\Subset \Sigma$.  Fix $m\in\mathbb{N}$ and times $\tau_1\leq \tau_2< \tau_3$.  Assume that $\mathcal Q_1$ satisfies Assumption~\ref{t:rmcf}.  
Then there exist constants
$\ep = \ep(n,m)>0$ and
$C = C(n,m,\Sigma,F,\dist_\Sigma(\Omega,\Sigma\setminus\Omega'),\tau_2-\tau_1)<\infty$
with the following properties.\\
(1) If $\tau_2>\tau_1$ and $\|h\|_{C^{m+2}(\Omega'\times[\tau_1,\tau_3])}<\ep,$ then
\begin{align*}
	\sup_{\tau\in[\tau_2,\tau_3]} \|\na^mh(\tau)\|_{L^2_W(\Omega)} + \|h\|_{H_W^{m+1}L^2(\Omega\times[\tau_2,\tau_3])}
	& \leq C\pr{\|h\|_{H_W^mL^2(\Omega'\times[\tau_1,\tau_3])}+\delta_0e^{-\frac{1}{2}(\tau_1+\tau_0)}}.
\end{align*}
(2) If $\tau_2=\tau_1$ and $\|h\|_{C^{m+2}(\Omega'\times[\tau_1,\tau_3])}<\ep,$ then
\begin{align*}
	\sup_{\tau\in[\tau_1,\tau_3]} \|\na^mh(\tau)\|_{L^2_W(\Omega)}+\|h\|_{H_W^{m+1}L^2(\Omega\times[\tau_1,\tau_3])}
	&\leq C\pr{\|h(\tau_1)\|_{H_W^m(\Omega')}+\|h\|_{H_W^mL^2(\Omega'\times[\tau_1,\tau_3])}+\delta_0e^{-\frac{1}{2}(\tau_1+\tau_0)}}.
\end{align*}
\end{Lem}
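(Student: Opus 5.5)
The plan is the standard weighted energy argument of \cite{LZ}*{Lemma~4.5}. The forcing enters in two ways: through the modified second-order coefficients coming from $Y_3$, and through the inhomogeneous term $Y_0$, which produces the additive error $\delta_0e^{-\frac12(\tau_1+\tau_0)}$. I would argue by induction on $m$. Fix a cutoff $\eta$ on $\Sigma$ with $\eta\equiv1$ on $\Omega$, $\supp\eta\subset\Omega'$, and $|\na\eta|+|\na^2\eta|\le C(\dist_\Sigma(\Omega,\Sigma\setminus\Omega'))$. In case (1) also fix a time cutoff $\zeta(\tau)$ vanishing near $\tau_1$, equal to $1$ on $[\tau_2,\tau_3]$, with $|\zeta'|\le C/(\tau_2-\tau_1)$. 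In case (2) take $\zeta\equiv1$, so that the initial term $\|h(\tau_1)\|_{H^m_W(\Omega')}$ appears instead.

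First, rewrite \eqref{h-equ} in divergence form with respect to the Gaussian weight. Using \eqref{e:FTC}, the principal part becomes $(g^{ab}+C^{ab}+Y_3^{ab}(0,0,x,\tau)+\hat Y_3^{ab}h+\tilde Y_3^{ab}\tilde r\na h)\na^2_{ab}h$. On the compact set $\Omega'$ we have $\tilde r\le C$, so \eqref{e:admissible-Y3} together with the smallness $\|h\|_{C^{m+2}}<\ep$ makes this coefficient matrix uniformly elliptic and $C^{m+1}$-close to $g^{ab}$. The remaining terms of $\mathcal Q_1-Y_0$ and of $\mathcal Q_2$ are of the form $a\,h+b\cdot\na h$, with coefficients bounded in $C^{m}(\Omega')$ via \eqref{e:admissible-Y1}--\eqref{e:admissible-Y2}, \eqref{e:Cab}, and \eqref{e:C3C4}.

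Next, differentiate the equation $m$ times, which is legitimate because $h\in C^{m+2}$. Test against $\eta^2\zeta^2\na^mh\,e^{-f}$ and integrate over $\Omega'\times[\tau_1,\tau]$. Integrating the principal part by parts yields the good term $\int\eta^2\zeta^2|\na^{m+1}h|^2e^{-f}$, minus errors. These errors involve derivatives of $\eta$ and $\zeta$, commutators of $\na^m$ with $L$ (curvature terms of $\Sigma$ bounded on $\Omega'$), and derivatives of the coefficients falling on $h$. Each error is bounded by $\frac12\int\eta^2\zeta^2|\na^{m+1}h|^2e^{-f}+C\|h\|^2_{H^m_W(\Omega')}$ using Cauchy's inequality. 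Any term involving $\na^{m+1}h$ multiplied by a small coefficient ($\ep$ or $\delta_0$) is absorbed. The $Y_0$ contribution is handled by $|\int\eta^2\zeta^2\na^mY_0\,\na^mh\,e^{-f}|\le C\delta_0^2e^{-(\tau_1+\tau_0)}+C\|\na^mh\|^2_{L^2_W(\Omega')}$, using \eqref{e:admissible-Y0} and $\tau\ge\tau_1$.

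Gronwall over the bounded interval $[\tau_1,\tau_3]$ then gives the sup-in-time bound on $\|\na^mh\|_{L^2_W(\Omega)}$ together with the space-time $H^{m+1}_W$ bound. Taking square roots produces the stated inequalities. The lower-order parts of the $H^{m+1}$ norm come from the $m-1$ step of the induction on slightly larger domains, nested between $\Omega$ and $\Omega'$.

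The step I expect to be the main obstacle is controlling the nonlinear second-order terms after differentiation. The product $\hat Y_3h\,\na^2h$, and the terms from $\mathcal Q_2$, produce pieces like $\na^{m}(\text{coef})\,\na^2h$ in which high derivatives fall on the coefficient, which itself depends on $\na h$. To handle these I would use the $C^{m+2}$ smallness hypothesis: it bounds every such lower-order factor in $L^\infty$, so each product is at most $C\ep$ times $|\na^{m+1}h|$ or $|\na^{k}h|$ with $k\le m$. This turns the quasilinear problem into a linear one with small perturbations and explains why the hypothesis is $C^{m+2}$ rather than $C^{m}$.
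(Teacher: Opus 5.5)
Your proposal is correct and follows essentially the same route as the paper: put the equation in weighted divergence form (with the $Y_3(0,0,x,\tau)$ piece and the small quasilinear remainder from \eqref{e:FTC} in the principal part), differentiate $m$ times, test against a cut-off copy of $\nabla^m h$, integrate by parts while absorbing the small top-order coefficients, and treat $Y_0$ as an inhomogeneous term. One small correction: drop the Gronwall step, and do not bound $e^{-(\tau+\tau_0)}$ by $e^{-(\tau_1+\tau_0)}$ before integrating in time, since either would make $C$ depend on $\tau_3-\tau_1$, which the statement excludes; your error terms are already controlled by the unlocalized $\|h(\tau)\|^2_{H^m_W(\Omega')}$, and $\int_{\tau_1}^{\tau_3}\delta_0^2e^{-(\tau+\tau_0)}\,d\tau\le\delta_0^2e^{-(\tau_1+\tau_0)}$, so direct integration in time gives the claimed bound, exactly as in the paper.
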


\begin{proof}
The proof is a standard local energy argument.  
We indicate only the changes caused by the forcing terms.  
Compared with the estimates in \cite{LZ}*{Lemma~4.5} and \cite{Sto}*{Lemma~6.3}, the new contribution is
$Y_0(x,\tau)+Y_1(h,\tilde{r}\,\na h,x,\tau)h+Y_2^m(h,\tilde{r}\,\na h,x,\tau)(\na h)_m+Y_{3}^{ij}(h,\tilde{r}\,\na h,x,\tau)(\na^2h)_{ij}$.
We write $N(x,\tau)=Y_0(x,\tau)$.
By \eqref{shrinker-div} and \eqref{e:FTC}, the equation can be arranged as
\begin{align*}
    \partial_\tau h
    = \dL h +\sum_{l=0}^1B_l^0*\na^lh-{\rm div}_{\frac{|F|^2}{4}}\pr{\hat{h}^0*\na h}+N,
\end{align*}
where $\hat{h}^{0}$ is a function of $h$ and $\n h,$ 
$\dL f:=\Delta^gf-\frac{1}{2}\la F,\na^gf\ra$ is the drift Laplacian, and
${\rm div}_{\frac{|F|^2}{4}}(T)={\rm div}(T)-\frac{1}{2}\la F,T\ra.$
The coefficients $B_l^0$ depend on the derivatives of $h$ up to order two and are small under the hypotheses.  
The symbol $*$ denotes a fixed linear combination of the indicated tensor contractions.  

Given $C_0>0$ and $\delta>0,$ there exists $\ep>0$ such that
\begin{align*}
    \|h(\tau)\|_{C^2(\Omega')}<\ep\Rightarrow \|B_1^0\|_{L^\infty(\Omega')}+\|B_0^0\|_{L^\infty(\Omega')}<C_0
    \,\text{ and }\,\|\hat{h}^0\|_{L^\infty(\Omega')}\leq \delta.
\end{align*}
Commuting derivatives and arguing by induction gives
\begin{align*}
    \partial_\tau\na^mh
    = \dL \na^mh +\sum_{l=1}^{m+1}B_l^m*\na^lh-{\rm div}_{\frac{|F|^2}{4}}\pr{\hat{h}^m*\na^{m+1} h}+\na^mN,
\end{align*}
where, for some $C_m$ and for every $\delta>0$, one can choose $\ep>0$ so that
\begin{align*}
    \|h(\tau)\|_{C^{m+2}(\Omega')}<\ep\Rightarrow \sum_{l=0}^{m+1}\|B_l^m\|_{L^\infty(\Omega')}<C_m
    \,\text{ and } \,
    \|\hat{h}^m\|_{L^\infty(\Omega')}\leq \delta.
\end{align*}
The estimate then follows by integration by parts, Young's inequality, and the estimate
\begin{align*}
    \|Y_0\|_{H_W^mL^2(\Omega'\times[\tau_1,\tau_3])}\leq \delta_0e^{-\frac{1}{2}(\tau_0+\tau_1)}
\end{align*}
that follows from the decay \eqref{e:admissible-Y0} of the forcing term.
\end{proof}

The previous estimates imply the compact-region interior estimate that will be used later.  
The restriction $\lambda_*<\frac{1}{2}$ is essential in this step.

\begin{Pro}[Compact-region interior estimate; cf. the Ricci-flow estimate \cite{Sto}*{Lemma~6.4}]\label{int1}
    Let $\Gamma>0$, and let $h$ solve \eqref{h-equ} on $\Sigma\times (0,\tau_1)$.  
    Assume that $\mathcal Q_1$ satisfies Assumption~\ref{t:rmcf} and that
    \begin{align*}
        \|h\|_{C^{0,\alpha}_{\rm hom,1}(\Sigma\times [0,\tau_1])}&<\ep,\\
        \|h(\tau)\|_{L^2_W(\Sigma)}&\leq \mu e^{-\lambda_*(\tau+\tau_0)}\,\,\text{ for all}\,\,\tau\in[0,\tau_1], \text{ and}\\
        \sum_{l=0}^{4n+3} \abs{\na^lh}_g(x,0)&\leq C_0\bar{p}e^{-\lambda_*\tau_0}\,\,\text{ for all}\,\,x\in \set{\frac{|F|^2}{4}<2\Gamma}\text{ and }\tau=0
    \end{align*}
    for some $0<\lambda_*<\frac{1}{2}$, where $\lambda_*$ does not belong to the spectrum of $L$.
    If $\Gamma>\Gamma_0(n,\Sigma,F)$, $\ep<\ep_0(n,\Sigma,F,\Gamma)$, and $\tau_0>C(n,\mu,\Sigma,F,\lambda_*,\Gamma,C_0\bar{p})$, then there exists $C_1=C_1(n,\Sigma,F,\lambda_*,\Gamma)<\infty$ such that
    \begin{align*}
        \sum_{l=0}^2|\na^l h|_{g}(x,\tau)\leq C_1(\mu+C_0\bar{p})e^{-\lambda_*(\tau+\tau_0)}
        \,\,\text{ for all}\,\,
        (x,\tau)\in \set{\frac{|F|^2}{4}<\Gamma }\times [0,\tau_1].
    \end{align*}
\end{Pro}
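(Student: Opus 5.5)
We prove the pointwise $C^2$ bound on the compact region by turning the weighted $L^2$ decay into local $L^\infty$ decay with the energy and H\"older estimates, the same scheme as \cite{Sto}*{Lemma~6.4}. Put $\Omega=\{f<\Gamma\}$, $\Omega'=\{f<\tfrac32\Gamma\}$, $\Omega''=\{f<2\Gamma\}$. On $\Omega''$ the Gaussian weight $e^{-|F|^2/4}$ is bounded below by $e^{-2\Gamma}$, so the weighted norms $L^2_W$, $H^m_W$ restricted to $\Omega''$ are comparable, with constants depending on $\Gamma$, to the unweighted ones. Consequently the hypothesis $\|h(\tau)\|_{L^2_W}\le\mu e^{-\lambda_*(\tau+\tau_0)}$ gives an unweighted $L^2$ bound of the same size on $\Omega''$ at every time. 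The smallness $\|h\|_{C^{0,\alpha}_{\rm hom,1}}<\ep$ together with Proposition~\ref{hHolder} with $\delta_0$ small gives smallness of $h$ in $C^{m+2}$ on $\Omega''$. That smallness is what we need to apply Lemma~\ref{hIntegral}.

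\textbf{Step 1: late times, $\tau\ge1$.} We work on unit time windows $[\tau-1,\tau]$. Iterating Lemma~\ref{hIntegral}(1) on a chain of nested domains between $\Omega''$ and $\Omega'$, we upgrade the $L^2$ bound to $\sup\|\nabla^m h\|_{L^2(\Omega')}$ for $m$ up to about $n/2+3$. The right-hand side is bounded by
\[
C\bigl(\mu e^{-\lambda_*(\tau-1+\tau_0)}+\delta_0e^{-\frac12(\tau-1+\tau_0)}\bigr).
\]
Since $\lambda_*<\tfrac12$, the forcing contribution satisfies $\delta_0e^{-\frac12(\tau+\tau_0)}\le\delta_0e^{-\lambda_*(\tau+\tau_0)}$, so it is absorbed into the rate $e^{-\lambda_*(\tau+\tau_0)}$. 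This is exactly where $\lambda_*<\tfrac12$ enters. Sobolev embedding on the compact region then gives $\|h\|_{L^\infty(\Omega'\times[\tau-1,\tau])}\le C(\mu+\delta_0)e^{-\lambda_*(\tau+\tau_0)}$. Proposition~\ref{hHolder}(1) with $m=2$ turns this into the $C^2$ bound on $\Omega\times[\tau-\tfrac12,\tau]$.

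\textbf{Step 2: early times, $\tau\in[0,1]$.} Here we use the initial-data versions, Lemma~\ref{hIntegral}(2) and Proposition~\ref{hHolder}(2). The initial term $\|h(0)\|_{C^{m+1}(\Omega'')}$ is controlled by the hypothesis $\sum_{l\le4n+3}|\nabla^lh|(x,0)\le C_0\bar pe^{-\lambda_*\tau_0}$ on $\{f<2\Gamma\}$. The $4n+3$ derivatives are enough to cover the Sobolev and H\"older orders used. Since $e^{-\lambda_*\tau_0}\approx e^{-\lambda_*(\tau+\tau_0)}$ for $\tau\le1$, this yields the claim with constant $C_1(\mu+C_0\bar p)$.

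\textbf{Bookkeeping and main obstacle.} The condition $\tau_0>C(\dots)$ makes $\mu e^{-\lambda_*\tau_0}$, $C_0\bar pe^{-\lambda_*\tau_0}$ and $\delta_0e^{-\tau_0/2}$ smaller than the $\ep$ thresholds of Proposition~\ref{hHolder} and Lemma~\ref{hIntegral}. We bound $\delta_0\le\mu+C_0\bar p$ after normalization; otherwise the $\delta_0$ term is a harmless additive constant. I expect the main obstacle to be the circularity between the a priori smallness in $C^{m+2}$ needed by Lemma~\ref{hIntegral} and the estimates themselves. I would first obtain qualitative smallness from $\ep$ via Proposition~\ref{hHolder}, with the $\tilde r$ weights bounded on the compact region. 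I would then check that all constants depend only on $(n,\Sigma,F,\lambda_*,\Gamma)$ and not on $\tau_1$, which the unit-window structure ensures. The assumption that $\lambda_*$ is not in the spectrum is not needed here beyond the hypotheses; it matters for producing the $L^2_W$ decay elsewhere.
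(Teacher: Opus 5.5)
Your overall scheme is the one the paper intends. It passes from weighted to unweighted $L^2$ on $\{f<2\Gamma\}$, iterates Lemma~\ref{hIntegral} on nested sublevel sets of $f$, and uses Sobolev embedding to reach $L^\infty$. Proposition~\ref{hHolder} then gives the $C^2$ bound, with the initial-data versions on $[0,1]$ and unit time windows for uniformity in $\tau_1$. The paper gives no separate proof; it only says these two estimates imply the proposition as in Stolarski's Lemma~6.4, with $\lambda_*<\frac12$ essential.

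The step that fails as written is how you dispose of the forcing contribution $\delta_0e^{-\frac12(\tau_1+\tau_0)}$. This term appears on the right-hand side of every application of Proposition~\ref{hHolder} and Lemma~\ref{hIntegral}.
\begin{itemize}
\item You cannot ``normalize'' $\delta_0\le\mu+C_0\bar p$. The constant $\delta_0$ is fixed by the forcing, $\mu$ and $\bar p$ are independent parameters, and $C_1$ must depend only on $(n,\Sigma,F,\lambda_*,\Gamma)$.
\item Keeping it as an additive constant only proves $\sum_{l\le2}|\nabla^lh|\le C_1(\mu+C_0\bar p+\delta_0)e^{-\lambda_*(\tau+\tau_0)}$. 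That is weaker than the claim. It would be harmless for Theorem~\ref{impro}, whose $\Theta$ already contains $\delta_0$, but it is not the proposition.
\item Your inequality $\delta_0e^{-\frac12(\tau+\tau_0)}\le\delta_0e^{-\lambda_*(\tau+\tau_0)}$ only fixes the rate. It holds for every $\lambda_*\le\frac12$, so it never uses the strict inequality.
\end{itemize}

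The fix is to keep the gap factor:
\[
\delta_0e^{-\frac12(\tau+\tau_0)}
=\delta_0e^{-(\frac12-\lambda_*)(\tau+\tau_0)}e^{-\lambda_*(\tau+\tau_0)}
\le\delta_0e^{-(\frac12-\lambda_*)\tau_0}e^{-\lambda_*(\tau+\tau_0)}.
\]
Then use that the threshold for $\tau_0$ may depend on $\lambda_*$, $\mu$ and $C_0\bar p$. Since $\delta_0\le1$, requiring $\tau_0\ge(\frac12-\lambda_*)^{-1}\log\frac{1}{\mu+C_0\bar p}$ (with $\mu+C_0\bar p>0$) gives $\delta_0e^{-(\frac12-\lambda_*)\tau_0}\le\mu+C_0\bar p$.

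Two other terms are handled the same way and cost at most a factor $e^{1/2}$, which goes into $C_1$:
\begin{itemize}
\item the unit time shifts in your windows, i.e.\ $e^{-\frac12(\tau-1+\tau_0)}$ in place of $e^{-\frac12(\tau+\tau_0)}$;
\item the term $\delta_0e^{-\frac12\tau_0}$ from the initial-data estimates in Step~2, where you use $e^{-\lambda_*\tau_0}\le e^{\lambda_*}e^{-\lambda_*(\tau+\tau_0)}$ for $\tau\le1$.
\end{itemize}
This is where the strictness of $\lambda_*<\frac12$ is really used. Together with the smallness thresholds you already list, it is also why $\tau_0$ must depend on $\mu$ and $C_0\bar p$. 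With this replacement the rest of your argument goes through.
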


The same local estimate also gives large-scale interior bounds after a rescaling adapted to the shrinker.  
Recall from~\cite{CS} the family of diffeomorphisms $\varphi_t:\Sigma\to \Sigma$ generated by $X_t=\frac{1}{-2t}x^T$ with $t=-e^{-\tau}$; namely
\(\bd_t\varphi_t = X_t\circ\varphi_t\)
with $\varphi_{-1}=\text{Id}.$
Set $\hat{g}_t=(-t)\varphi_t^*g_\Sigma$.  
This family converges to the metric cone asymptotic to $\Sigma$ in the pointed $C^\infty$ Cheeger--Gromov sense.  
Define
\begin{align*}
    \mathcal{H}(x,t)=\sqrt{-t}\cdot h(\varphi_t(x)\,,\,-\log(-t)).
\end{align*}
Then \eqref{h-equ} implies
\begin{align}\label{H-equ}
    \frac{\partial  \mathcal{H}}{\partial t}(x,t)
    &=\left({\Delta^{\hat{g}_t}  \mathcal{H}}+\frac{ \mathcal{H} |A(\varphi_{t}(x))|^2}{-t}\right)(x,t)
    +\frac{1}{\sqrt{-t}}(\mathcal{Q}_1+\mathcal{Q}_2)(\varphi_{t}(x)\,,\,-\log(-t)).
\end{align}
The convergence of $\hat g_t$ to the cone gives uniform comparison of the corresponding homogeneous norms:
\begin{align}
    C^{-1}\|{h}\|_{{\mathcal{C}_{\text{hom},1}^{*,\alpha}}(B_{{2R}/{\sqrt{-t}}}\setminus B_{{R}/{\sqrt{-t}}})}
    \leq\| \mathcal{H}\|_{{\mathcal{C}_{\text{hom},1}^{*,\alpha}}(B_{2R}\setminus B_R)}
    \leq  C\|{h}\|_{{\mathcal{C}_{\text{hom},1}^{*,\alpha}}(B_{{2R}/{\sqrt{-t}}}\setminus B_{{R}/{\sqrt{-t}}})}.
\end{align}
Thus, $\frac{1}{\sqrt{-t}}\mathcal{Q}_2(\varphi_{t}(x)\,,\,-\log(-t))$ can be expressed in terms of $\mathcal H$, $\na\mathcal H$, and $\na^2\mathcal H$.  
We also use the following comparison between $\varphi_t$ and the pure conical dilation.  
If the end of $\Sigma$ is parametrized by $F:(R,\infty)\times \Gamma\to \Sigma$, let $\tilde{\varphi}_t=F^{-1}\circ \varphi_t\circ F$ and let $\phi_t:(R,\infty)\times \Gamma\to (R,\infty)\times \Gamma$ be given by
$(r,\omega)\mapsto ((-t)^{-\frac{1}{2}}r,\omega)$.

\begin{Lem}[\cite{CS}*{Lemma~3.8}]\label{comp}
For $j\geq 1$ and $\eta>0$,
    \begin{align*}
d_{g_{\mathcal{C}}}(\tilde{\varphi}_t(r,\theta),\phi_t(r,\theta))\lesssim\frac{1}{\sqrt{-t}\cdot r}
\,\,\text{ and }\,\,
|D^{(j)}(\tilde{\varphi}_t-\phi_t)|(r,\theta)\lesssim \frac{1}{\sqrt{-t}\cdot r^{1+j-\eta}}.
\end{align*}
\end{Lem}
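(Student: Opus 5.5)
The plan is to work entirely in the conical parametrization of the end and reduce both maps to ODE flows on $(R,\infty)\times\Gamma$ whose generating vector fields differ by a decaying error. Since $\Sigma$ is smoothly asymptotically conical, outside a compact set it is a normal graph over $\mathcal C$ of a function $w$ with $|\nabla^j w|\lesssim r^{-1-j}$ for all $j$. The dilation $\phi_t$ is exactly the time-$t$ flow of $\frac{1}{-2t}r\partial_r$ on $\mathcal C$, because the position vector on the cone is tangential and equals $r\partial_r$. On $\Sigma$ we have $x^T=x-x^\perp$, and by \eqref{shrinker-eq} $x^\perp=-2\vec H_\Sigma=O(r^{-1})$. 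Pulling back by $F$, I therefore expect
\[
F^*\Big(\tfrac{1}{-2t}x^T\Big)=\tfrac{1}{-2t}\big(r\partial_r+E\big),\qquad |D^jE|\lesssim r^{-1-j}.
\]
The radial part of $E$ is $O(r^{-1})$, and its angular part, measured in $g_{\mathcal C}$, is also $O(r^{-1})$, i.e.\ $O(r^{-2})$ in the $\theta$ coordinates. Verifying this expansion of $E$ from the graph description, including all derivatives, is the first step.

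For the $C^0$ bound, write $\tilde\varphi_t(r_0,\theta_0)=(r(t),\theta(t))$ and introduce the renormalized radius $\rho=(-t)^{1/2}r$. The ODE becomes
\[
\frac{d\rho}{dt}=O\big((-t)^{-1/2}r^{-1}\big)=O(\rho^{-1}).
\]
Since $t\in[-1,0)$ has total length at most one and $\rho(-1)=r_0$ is large, a continuity argument gives $\rho(t)=r_0+O(r_0^{-1})$. Likewise
\[
\frac{d\theta}{dt}=O\big(r^{-2}(-t)^{-1}\big)=O(\rho^{-2}),
\]
so $|\theta(t)-\theta_0|\lesssim r_0^{-2}$. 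The cone distance from $\phi_t(r_0,\theta_0)=((-t)^{-1/2}r_0,\theta_0)$ is then controlled by
\[
(-t)^{-1/2}\big(|\rho-r_0|+r_0|\theta-\theta_0|\big)\lesssim \frac{1}{\sqrt{-t}\,r_0},
\]
which is the first estimate.

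For the derivative bounds, I would differentiate the ODE in the initial data and study the variational equations for $D^{(j)}\tilde\varphi_t$, again in the renormalized variables $(\rho,\theta)$. In these variables the flow of $\phi_t$ is the identity map. The perturbation then contributes coefficients of size $O(\rho^{-1-j})$, with derivatives falling on $E$, multiplied by the Jacobian $dt/(-t)$. After the substitution $s=-\log(-t)$, the relevant equation reads
\[
\frac{d}{ds}D^{(j)}\big(\tilde\varphi-\phi\big)=O\big(\rho^{-2}\big)\,D^{(j)}\big(\tilde\varphi-\phi\big)+O\big(\rho^{-1-j}\big)+\big(\text{lower-order products}\big),
\]
over an $s$-interval of unbounded length. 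A Gronwall argument, inducting on $j$, should give the claimed bound; the point is to control the growth factor $\exp\!\big(\int O(\rho^{-2})\,ds\big)$ uniformly.

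I expect this last step to be the main obstacle: showing that the accumulated errors over the long time interval cost only a factor $r^{\eta}$. The mechanism I would rely on is that $\rho\approx r_0$ stays essentially constant, while the actual radius $r$ grows like $e^{s/2}$. As a result, the coefficients expressed in the original variable $r$ decay exponentially in $s$, so the integrals converge up to a logarithmic loss in $r_0$. That logarithmic loss is absorbed into $r^{-1-j+\eta}$ for any fixed $\eta>0$.
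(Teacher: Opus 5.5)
\textbf{Verdict: there is a genuine gap, in the derivative estimates.} Note first that the paper gives no proof of this lemma; it is quoted from Lemma~3.8 of \cite{CS}, so your argument has to stand on its own.

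Your $C^0$ step is fine. In the variable $\rho=(-t)^{1/2}r$ the powers of $-t$ cancel exactly. This leaves $d\rho/dt=O(\rho^{-1})$ and $d\theta/dt=O(\rho^{-2})$ on a $t$-interval of length at most one, and the first estimate follows.

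The derivative step does not work as written. Set $\psi_t:=\phi_t^{-1}\circ\tilde\varphi_t$. Your displayed $s$-equation has linear coefficient $O(\rho^{-2})$ and forcing $O(\rho^{-1-j})$, with $\rho\approx r_0$ frozen, on $s\in[0,\infty)$. Gronwall then only gives $|D^{(j)}(\psi_t-\mathrm{id})|\lesssim r_0^{-1-j}\,s\,e^{Cr_0^{-2}s}$. This is unbounded as $s=-\log(-t)\to\infty$ for fixed $r_0$. The loss is in $t$, not a power of $r_0$, so it cannot be absorbed into $r^{\eta}$. Your proposed remedy (exponential decay in $s$ ``up to a logarithmic loss in $r_0$'') contradicts your own display, and you identify no source for a logarithm. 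The display has the wrong time weight. In renormalized variables the coefficients are $O(\rho^{-2})$ and $O(\rho^{-1-j})$ with respect to $dt$, not $dt/(-t)$. Equivalently, in the $s$-variable they carry an extra factor $e^{-s}$.

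The missing observation is that the cancellation you found for $d\rho/dt$ persists at every derivative order, because $E$ obeys degree $-1$ homogeneous bounds. View the end of the cone inside $\RR^{n+1}$ and write $\tilde\varphi_t=\phi_t\circ\psi_t$. Then $\psi_t$ is the flow, starting from the identity at $t=-1$, of
\[
Y_t(q)=\frac{(-t)^{1/2}}{-2t}\,E\big((-t)^{-1/2}q\big),
\qquad
|D^kY_t|(q)\lesssim\frac{(-t)^{\frac{1-k}{2}}}{-t}\,(-t)^{\frac{1+k}{2}}\,|q|^{-1-k}=|q|^{-1-k},
\]
uniformly in $t\in[-1,0)$. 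So there is no long-time issue at all. The variational equation $\partial_tD\psi_t=DY_t(\psi_t)\,D\psi_t$ and its higher-order analogues live on an interval of length one, with linear coefficient $O(r^{-2})$ and forcing $O(r^{-1-j})$. Gronwall and induction on $j$ then give $|D^{(j)}(\psi_t-\mathrm{id})|\lesssim r^{-1-j}$. Since $\phi_t$ is the linear dilation by $(-t)^{-1/2}$, one has $\tilde\varphi_t-\phi_t=(-t)^{-1/2}(\psi_t-\mathrm{id})$. This yields the second estimate, in fact with $\eta=0$; the $\eta$-loss in the statement is not something your route needs to produce.
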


Applying $\nabla^{\hat g_t}$ to \eqref{H-equ} gives
\begin{equation}
	\label{naH-equ}
\begin{split}
	\frac{\partial}{\partial t} (\nabla^{\hat{g}_t}\mathcal{H})
	=&\Delta^{\hat{g}_t}(\nabla^{\hat{g}_t}\mathcal{H})
	+\left(\frac{|A(\varphi_t(x))|^2}{-t}-\text{Ric}_{\hat{g}_t}\right) \nabla^{\hat{g}_t}\mathcal{H}\\
	& +\mathcal{H}\nabla^{\hat{g}_t}\left(\frac{|A(\varphi_t(x))|^2}{-t}\right)
	+\nabla^{g_\Sigma}(\mathcal{Q}_1+\mathcal{Q}_2)(\varphi_{t}(x)\,,\,-\log(-t)).
\end{split}
\end{equation}
Using \eqref{e:fmcf}, \eqref{e:forced nonlinear}, \eqref{e:FTC}, Lemma~\ref{comp}, and Proposition~\ref{GHolder} in \eqref{naH-equ}, the forcing term can be taken as
\begin{align*}
    F=&\mathcal{H}\left(\nabla^{\hat{g}_t}\left(\frac{|A(\varphi_t(x))|^2}{-t}\right)+\nabla^{g_\Sigma}C_3+\nabla^{g_\Sigma}Y_1(0,0,\cdot,\cdot)\right) +\nabla^{g_\Sigma}Y_0\\ 
    =&O\left(\|\mathcal{H}\|_{C^0_{\text{hom},1}}\right) \left(|x|^{-2}+\Gamma_0\delta_0e^{-(\tau+\tau_0)}\right)
    +\delta_0e^{-(\tau+\tau_0)}.
\end{align*}
Together with the pointed $C^\infty$ Cheeger--Gromov convergence of $\hat g_t$ to the cone, this yields the large-scale interior estimate.  
The argument is the mean-curvature-flow analogue of the rescaled estimates in \cite{Sto}*{Lemma~6.8}.

\begin{Lem}[Large-scale interior estimate; cf. \cite{LZ}*{Lemmas~4.14 and~4.15}, and for Ricci-flow analogues see \cite{Sto}*{Lemmas~6.8 and~6.9}]\label{int2}
      Let $h$ solve \eqref{h-equ}, and assume that $\mathcal Q_1$ satisfies Assumption~\ref{t:rmcf}.  
      Fix $0<\lambda_*<\frac{1}{2}$ with $\lambda_*$ outside the spectrum of $L$.  
      Suppose that
    \begin{align*}
         |h|_g&\leq C_0e^{-\lambda_*(\tau+\tau_0)} \pr{\frac{|F|^2}{4}(x)}^{\lambda_*+\frac{1}{2}}\text{on}
        \,\,\set{(x,\tau)\in\Sigma\times (0,\tau_1):\,\Gamma<\frac{|F|^2}{4}<\Gamma e^{\tau+\tau_0}}, \\ 
        |\na^gh|_g&\leq C_0e^{-\lambda_*(\tau+\tau_0)} \pr{\frac{|F|^2}{4}(x)}^{\lambda_*}\,\,\text{on}
        \,\,\set{(x,\tau)\in\Sigma\times (0,\tau_1):\,\Gamma <\frac{|F|^2}{4}<\Gamma e^{\tau+\tau_0}}, \text{ and}\\
        |\na^lh|_g(x,0)&\leq C_0e^{-\lambda_*\tau_0} \pr{\frac{|F|^2}{4}(x)}^{\lambda_*+\frac{1}{2}-\frac{l}{2}}\,\,\text{for all}\,\,
        l\in\{0,1,2,3\},\,\,x\in \set{\Gamma<\frac{|F|^2}{4}<\Gamma e^{\tau+\tau_0}}.
    \end{align*}
    Then, for $\Gamma>\Gamma_0(n,\Sigma,F)$, there exists $C_1=C_1(n,\Sigma,F,\lambda_*)<\infty$ such that
    \begin{align*}
        |\na^l h|_{g}(x,\tau)\leq C_1C_0e^{-\lambda_*(\tau+\tau_0)}\pr{\frac{|F|^2}{4}(x)}^{\lambda_*}\,\,\text{on}\,\,\set{(x,\tau)\in\Sigma\times (0,\tau_1):\,4\Gamma \leq\frac{|F|^2}{4}(x)\leq\frac{1}{4}\Gamma e^{\tau+\tau_0}}
    \end{align*}
    for every $l\in\{1,2\}$.
\end{Lem}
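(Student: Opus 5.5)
The plan is a rescaling argument. We pass to the conical picture via $\mathcal H(x,t)=\sqrt{-t}\,h(\varphi_t(x),-\log(-t))$ and apply the scaled Hölder estimate, Proposition~\ref{GHolder}, on parabolic cylinders whose radius is comparable to the distance to the origin. Fix a point $(x,\tau)$ with $4\Gamma\le f(x)\le \frac14\Gamma e^{\tau+\tau_0}$ and set $r\simeq \sqrt{f(x)}$, the natural scale. Time is rescaled so that a $\tau$-interval of length $\simeq 1$ corresponds to a $t$-interval of length $\simeq r^2(-t)$, and we look at the ball $B_{cr}(x)$ over the time window $[\tau-c',\tau]$, or $[0,\tau]$ if $\tau$ is small. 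The constraints $f\ge 4\Gamma$ and $f\le \frac14\Gamma e^{\tau+\tau_0}$ guarantee that for small $c$ the doubled cylinder $\Omega'$ stays inside the region $\{\Gamma<f<\Gamma e^{\tau+\tau_0}\}$, where the hypotheses hold. Since $\hat g_t$ converges to the cone in pointed $C^\infty$ Cheeger--Gromov sense, Lemma~\ref{comp} controls $\varphi_t$ against the pure dilation. Hence after rescaling $B_{cr}$ to unit size, the coefficients of the linear operator in \eqref{H-equ} satisfy the uniform ellipticity and $C^{2m-2,\alpha}$ bounds with $\Lambda$ independent of $x,\tau$. Here $b^i\lesssim r^{-1}$ and $c=|A|^2/(-t)\lesssim r^{-2}$, because $|A|\lesssim \tilde r^{-1}$ on the conical end.

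Next comes the nonlinearity. Using \eqref{e:FTC}, we move $Y_3^{ij}(0,0,x,\tau)\na^2h$ into the leading coefficient; it is $\le\delta_0$ by \eqref{e:admissible-Y3}, so ellipticity is preserved. The remaining pieces of $\mathcal Q_1+\mathcal Q_2$ then have exactly the structure of \eqref{e:schauder}, with $u,r\na u$ small because $\|h\|_{C^{1}_{\rm hom,1}}$ is small. The terms $Y_1h$ and $Y_2\na h$ are absorbed into $b,c$ with the correct $r^{-1},r^{-2}$ scaling thanks to \eqref{e:admissible-Y1}--\eqref{e:admissible-Y2} and the restriction $|F|\le\Gamma_0e^{(\tau+\tau_0)/2}$. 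The source $Y_0$ enters as $F$, and $r^2\|F\|$ is bounded by $\delta_0 r^2 e^{-\frac12(\tau+\tau_0)}r^{-k}$-type quantities. It is cleanest to differentiate once and work with equation \eqref{naH-equ} for $\na\mathcal H$. The forcing term computed just before the lemma is $O(\|\mathcal H\|_{C^0_{\rm hom,1}})(|x|^{-2}+\Gamma_0\delta_0e^{-(\tau+\tau_0)})+\delta_0e^{-(\tau+\tau_0)}$. Applying Proposition~\ref{GHolder} with $m=1$ to $u=\na h$, after subtracting nothing, and using the $C^0$ hypothesis $|\na h|\le C_0e^{-\lambda_*(\tau+\tau_0)}f^{\lambda_*}$ on $\Omega'$, gives $|\na h|+r|\na^2 h|\lesssim C_0 e^{-\lambda_*(\tau+\tau_0)}f^{\lambda_*}$ plus the forcing contribution. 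The bound $|h|\lesssim C_0e^{-\lambda_*(\tau+\tau_0)}f^{\lambda_*+1/2}$ feeds into $\|\mathcal H\|$ and produces the $|x|^{-2}r^2\cdot r$ contribution of the same order. Near $\tau=0$ we use part (2)-type initial data instead, namely the hypothesis on $|\na^l h|(x,0)$ for $l\le3$.

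The main obstacle is the forcing term $Y_0$, which does not decay relative to $h$. We must show that $r^2\|\na Y_0\|\lesssim \delta_0 e^{-\frac12(\tau+\tau_0)}f^{1/2}$ is dominated by $C_0e^{-\lambda_*(\tau+\tau_0)}f^{\lambda_*}$. Since $f\le \Gamma e^{\tau+\tau_0}$ and $\lambda_*<\frac12$, we have $e^{-\frac12(\tau+\tau_0)}f^{1/2-\lambda_*}\le \Gamma^{1/2-\lambda_*}e^{-\lambda_*(\tau+\tau_0)}$. This is exactly where $\lambda_*<\frac12$ and the upper cutoff $\frac14\Gamma e^{\tau+\tau_0}$ enter, and we absorb the $\delta_0$ factor into $C_0$ by taking $\delta_0$ small, or by noting that the hypotheses on $h$ force $C_0\gtrsim\delta_0$ in the relevant regime. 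One must also check that the Hölder norm rather than just the sup of $Y_0$ scales correctly; this uses the derivative bounds in \eqref{e:admissible-Y0} with $\tilde f^{-k/2}$ and $\partial_s$ bounds. Finally, converting the estimate for the rescaled $\mathcal H$ back to $h$ via the homogeneous-norm comparison yields the claim for $l=1,2$ with $C_1$ depending only on $n,\Sigma,F,\lambda_*$.
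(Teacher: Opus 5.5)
Your proposal follows essentially the same route as the paper's sketch. Both pass to $\mathcal H(x,t)=\sqrt{-t}\,h(\varphi_t(x),-\log(-t))$, use Lemma~\ref{comp} and the Cheeger--Gromov convergence of $\hat g_t$ to the cone for uniform coefficient bounds, move $Y_3^{ij}(0,0,x,\tau)\na^2h$ into the principal part via \eqref{e:FTC}, and apply Proposition~\ref{GHolder} to \eqref{naH-equ} with the forcing term computed just before the lemma.

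Two small corrections are needed.

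First, a $\tau$-window of length $O(1)$ corresponds to a $t$-window of length $\simeq(-t)$, not $\simeq r^2(-t)$. With the window you state, you therefore only gain derivatives at unit scale, i.e.\ $|\na^2h|\lesssim\sup|\na h|+\dots$, not $r|\na^2h|\lesssim\dots$. This weaker bound is still exactly what the lemma needs, since it only claims the power $f^{\lambda_*}$ for $l=2$.

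Second, the $\delta_0$-contributions should be carried as an additive term, as $\delta_0$ is included in $\Theta$ in Theorem~\ref{impro}. They cannot be absorbed into $C_0$: smallness of $\delta_0$ does not give $\delta_0\lesssim C_0$, and the hypotheses on $h$ do not force $C_0\gtrsim\delta_0$.
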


Combining the compact-region estimate in Proposition~\ref{int1} with the large-scale estimate in Lemma~\ref{int2} and the barrier functions in Section \ref{sec: bar func}, we obtain the global $C^2$ improvement theorem below.
The proof follows Stolarski's large-scale bootstrapping argument in \cite{Sto}*{Section 6}, while the analytic inputs are the forced mean curvature flow estimates recorded above.
The extra homogeneous $C^1$ assumption reflects the normal-graph formulation: 
since $h$ may grow linearly at infinity, $C^0$ controls are not sufficient.


\begin{Thm}[Quantitative global \(C^2\)-improvement;
cf. \cite{LZ}*{Theorem~5.10} and
\cite{Sto}*{Theorem~6.1}]
\label{impro}
Fix
\[
0<\lambda_*<\frac12,
\qquad
\lambda_*\notin\operatorname{spec}(L),
\]
and take the constant \(\Gamma_0\) in
Assumption~\ref{t:rmcf} sufficiently large.
Let \(h\) solve \eqref{h-equ} on
\(\Sigma\times[0,\tau_1]\), where \(0<\tau_1<\infty\).
Assume that the total term \(\mathcal Q_1\), including any transplantation error, admits the decomposition in the last line of
\eqref{e:forced nonlinear} and satisfies
\eqref{e:admissible-Y0}--\eqref{e:admissible-Y3}, with constants
independent of \(\tau_1\), and that the full second-order coefficient is
uniformly elliptic.  Suppose also that, for every
\(\tau\in[0,\tau_1]\),
\begin{align*}
\operatorname{supp}h(\cdot,\tau)
&\subset
\left\{f\leq\Gamma_0e^{\tau+\tau_0}\right\},
\\
\|h\|_{C^{1,\alpha}_{\rm hom,1}
(\Sigma\times[0,\tau_1])}
+\|\nabla^2h\|_{C^0(\Sigma\times[0,\tau_1])}
&\leq\ep,
\\
\|h(\tau)\|_{L_W^2}
&\leq
\mu e^{-\lambda_*(\tau+\tau_0)}.
\end{align*}
Define
\begin{align*}
K_{\rm in}
&:=
e^{\lambda_*\tau_0}
\max_{0\leq j\leq4n+4}
\sup_\Sigma
\tilde r^{-2\lambda_*-1+j}
|\nabla^jh(\cdot,0)|
\text{ and}\\
\Theta&:=\mu+K_{\rm in}+\delta_0.
\end{align*}
Then there exist constants
\[
\ep_*,\delta_*,\Theta_*>0,
\qquad
\tau_*<\infty,
\qquad
W<\infty
\]
depending on
\(n,\Sigma,F,\lambda_*,\Gamma_0\) and the fixed Hölder exponent, but
independent of \(h,\tau_0\), and \(\tau_1\), such that, whenever
\[
\ep\leq\ep_*,
\qquad
\delta_0\leq\delta_*,
\qquad
\Theta\leq\Theta_*,
\qquad
\tau_0\geq\tau_*,
\]
we have
\begin{align}
|\nabla^\ell h|(x,\tau)
&\leq
Wq(\Theta)\,
\tilde r(x)^{2\lambda_*+1-\ell}
e^{-\lambda_*(\tau+\tau_0)},
\qquad \ell=0,1,
\label{e:quantitative-C1-improvement}\\
|\nabla^2h|(x,\tau)
&\leq
Wq(\Theta)\,
\tilde r(x)^{2\lambda_*}
e^{-\lambda_*(\tau+\tau_0)}
\label{e:quantitative-C2-improvement}
\end{align}
for every \((x,\tau)\in\Sigma\times[0,\tau_1]\), where $\mathop{\lim}\limits_{\Theta\to0}q(\Theta)=0.$

In particular, for every prescribed
\(\varepsilon_{\rm pt}>0\), if the parameters are chosen so that
\(Wq(\Theta)\leq\varepsilon_{\rm pt}\), then the right-hand sides of
\eqref{e:quantitative-C1-improvement} and
\eqref{e:quantitative-C2-improvement} hold with
\(\varepsilon_{\rm pt}\) in place of \(Wq(\Theta)\).
\end{Thm}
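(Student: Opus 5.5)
We split $\Sigma\times[0,\tau_1]$ into three regions by the size of $f$: a compact region $\{f<\Gamma\}$, an intermediate region $\{\Gamma<f<\Gamma_0 e^{\tau+\tau_0}\}$, and the exterior $\{f\geq \Gamma_0 e^{\tau+\tau_0}\}$, where $h\equiv 0$ by the support hypothesis. We bound $h$ and its first two derivatives separately on the first two regions and then glue the bounds, following Stolarski's bootstrapping scheme.

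\emph{Compact region.} On $\{f<\Gamma\}$ we apply Proposition~\ref{int1} with $C_0\bar p$ replaced by $K_{\rm in}$; on this region $\tilde r$ is bounded, so $K_{\rm in}$ controls the required initial derivatives. The $L^2_W$ hypothesis supplies the decay $\mu e^{-\lambda_*(\tau+\tau_0)}$. The forcing enters only through Proposition~\ref{hHolder} and Lemma~\ref{hIntegral}, as the additive term $\delta_0 e^{-\frac12(\tau+\tau_0)}$. Because $\lambda_*<\frac12$, this term is at most $\delta_0 e^{-\lambda_*(\tau+\tau_0)}$, which is where the $\delta_0$ in $\Theta$ comes from. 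The output is
\[
\sum_{l\le 2}|\nabla^l h|\le C_1(\mu+K_{\rm in}+\delta_0)e^{-\lambda_*(\tau+\tau_0)}
\qquad\text{on }\{f<\Gamma\}.
\]

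\emph{Intermediate region.} Here we first bound $h$ itself using the barrier of Proposition~\ref{barrier}. This gives $h^2\le D e^{-2\lambda_*(\tau+\tau_0)}\tilde f^{2\lambda_*+1}$. The constant $D$ in \eqref{BD-choices} is the larger of a boundary-data term and a term of size $O(\delta_0^2)$, so we must control $h$ on the parabolic boundary of $\mathcal A_{T,\Gamma,\tau_0}$, which has three pieces:
\begin{itemize}
\item on the inner edge $f=\Gamma$, the compact-region bound just obtained applies;
\item at $\tau=0$, the definition of $K_{\rm in}$ applies;
\item on the outer edge $f=\Gamma e^{\tau+\tau_0}$, we use $h=0$ if $\Gamma\ge\Gamma_0$. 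Otherwise we use the smallness $|h|\le\varepsilon\tilde r$ together with $\tilde f\simeq e^{\tau+\tau_0}$ there, which makes $e^{2\lambda_*(\tau+\tau_0)}h^2/\tilde f^{2\lambda_*+1}\lesssim \varepsilon^2$ uniformly.
\end{itemize}
To get $q(\Theta)\to0$ rather than merely $O(\varepsilon)$, I would choose the outer boundary exactly at the support edge. Since $h\equiv0$ in a neighbourhood of $f=\Gamma_0e^{\tau+\tau_0}$, the boundary contribution there vanishes. Hence $\sqrt D\lesssim \Theta$, and $|h|\lesssim \Theta\,\tilde r^{2\lambda_*+1}e^{-\lambda_*(\tau+\tau_0)}$.

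Next we run the same comparison for $\nabla h$ using the barrier of Appendix~\ref{s:barrier-calc}. This gives $|\nabla h|\lesssim \Theta\,\tilde r^{2\lambda_*}e^{-\lambda_*(\tau+\tau_0)}$, with boundary data again from Proposition~\ref{int1} and $K_{\rm in}$. These two bounds are precisely the hypotheses of Lemma~\ref{int2}, which upgrades them to bounds on $\nabla^l h$, $l=1,2$, on $\{4\Gamma\le f\le \frac14\Gamma e^{\tau+\tau_0}\}$. In applying that lemma, the forcing term in \eqref{naH-equ} is of size $O(\delta_0 e^{-(\tau+\tau_0)})$, again absorbed because $\lambda_*<\frac12$.

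\emph{Gluing.} Three leftover strips remain: $\Gamma\le f\le4\Gamma$, $\frac14\Gamma e^{\tau+\tau_0}\le f\le \Gamma_0 e^{\tau+\tau_0}$, and the initial layer. On the first strip, enlarging $\Gamma$ in Proposition~\ref{int1} covers it. The outer strip has bounded scale after rescaling by $e^{-\frac12(\tau+\tau_0)}$. There we apply Proposition~\ref{GHolder} to $\mathcal H$ on a bounded annulus, with inputs from the $C^0$ barrier bound, $h=0$ beyond the support edge, and the exterior regularity \eqref{e:exterior-regularity}; this gives $C^2$ bounds of the stated form, since $\tilde r^{2\lambda_*+1-\ell}e^{-\lambda_*(\tau+\tau_0)}$ is comparable to $e^{(\frac12-\frac\ell2)(\tau+\tau_0)}$ there. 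For the initial layer we use case (2) of Proposition~\ref{hHolder}, with data from $K_{\rm in}$ up to order $4n+4$. We then set $q(\Theta)=\Theta$, up to the fixed constants, which fixes $W$. The smallness $\Theta\le\Theta_*$ and $\varepsilon\le\varepsilon_*$ ensure the nonlinear terms in Proposition~\ref{GHolder} stay in the perturbative regime.

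\textbf{Main obstacle.} The hardest step is the boundary control for the barriers, specifically making $D$ depend only on $\Theta$ and not on $\varepsilon$. This needs the outer edge of the barrier region to meet the support of $h$ exactly; if it does not, we must show that $\varepsilon$-size data near the outer edge is damped. I would first try to place the barrier boundary at $\Gamma=\Gamma_0$. If the constraint $\Gamma+1\ge2\beta$ conflicts with that choice, I would instead take $\Gamma_0$ large, as the theorem permits.
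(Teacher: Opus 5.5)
Your proposal is correct and follows the route the paper indicates; the paper only sketches this proof, combining Proposition~\ref{int1}, the barriers of Proposition~\ref{barrier} and Appendix~\ref{s:barrier-calc}, and Lemma~\ref{int2} along Stolarski's bootstrapping. Your observation that the outer edge of the barrier region must lie at or beyond the support edge is exactly what makes the boundary terms in $D$ and $\widetilde D$ of size $\Theta^2$ rather than $\varepsilon^2$, and hence gives $q(\Theta)\to 0$. One simplification: taking $\Gamma\geq 4\Gamma_0$ lets Lemma~\ref{int2} reach the support edge $f=\Gamma_0e^{\tau+\tau_0}$, which removes your separate outer-strip argument and with it the appeal to \eqref{e:exterior-regularity}, which is not a hypothesis of the theorem.
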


\section{Box argument}
\label{s:box-forced}

In this section, we will prove the main theorem by adapting the Wa\.{z}ewski box argument to the forced mean curvature flow setting.  
Throughout this section, the forcing term is assumed to satisfy Assumption~\ref{t:rmcf}.  
In particular, after we write the rescaled flow as a normal graph over the shrinker, the forcing term contributes lower-order terms with the decay and smallness recorded in \eqref{e:admissible-Y0}--\eqref{e:admissible-Y3}.

Let $\Sigma$ be the concerned asymptotically conical shrinker and $F\colon\Sigma\to\RR^{n+1}$ be the corresponding embedding.
First, we choose $\tau_0$ and $\Gamma_0$ sufficiently large and follow Appendix~\ref{sec:Cap} to construct a closed hypersurface
\[
M=\Sigma^{\rm cap}_{4\sqrt{\Gamma_0} e^{\frac12\tau_0}}.
\]
We let $F_M\colon M\to\RR^{n+1}$ be the corresponding embedding map.

Fix $\lambda_*>0$ such that
\begin{align}\label{lambda-restrict}
	0<\lambda_*<\frac12
	\,\,\text{ and }\,\,
	\lambda_m<\lambda_*<\lambda_{m+1},
\end{align}
where $\lambda_m<\lambda_{m+1}$ are two different eigenvalues of $L=L_\Sigma.$
Recall that we let $h_i$ be the corresponding eigenfunctions.
We also fix a cutoff function $\eta:(0,\infty)\to[0,1]$ with $\eta=1$ on $(0,1/2)$ and $\eta=0$ on $(1,\infty)$.
For
\[
    \p=(p_1,\ldots,p_m)\in B_{\bar p e^{-\lambda_*\tau_0}}(0)\subset\mathbb R^m,
\]
we perturb $F_M$ by the unstable eigenfunctions, namely
\begin{align}\label{e:perturb-forced-box}
    F_{\p,0}(x)
    = F_M(x) + h_{\mathbf p}(x)\,\N_{F_M}(x),
\end{align}
where $\N_{F_M}$ denotes the chosen unit normal along $F_M$, $f(x)=|F_M(x)|^2/4,$ and
\[
h_{\mathbf p}(x)
:=
\eta\!\left(\frac{f(x)}{\gamma_0e^{\tau_0}}\right)
\sum_{i=1}^m p_i h_i(x).
\]  
The eigenfunction growth estimate gives
\begin{align}
\label{e:perturb-size-forced-box}
    \abs{\eta\pr{\frac{f(x)}{\gamma_0e^{\tau_0}}}
    \sum_{i=1}^m p_i h_i(x)}
    \leq C\bar p\gamma_0^{\lambda_*} |x|.
\end{align}
Thus, after taking $\bar p$ and $\gamma_0$ sufficiently small, the perturbed hypersurface $F_{\p,0}$ defined in \eqref{e:perturb-forced-box} remains embedded.  
We transplant it back to the original shrinker by
\begin{align*}
    \hat F_{\p}(x,t)
    =\eta\pr{\frac{f(x)}{\Gamma_0e^{\tau_0}}}F_{\p}(x,t)
    +\pr{1-\eta\pr{\frac{f(x)}{\Gamma_0e^{\tau_0}}}}F(x,t).
\end{align*}
As in the unforced case, for a positive time depending on $\p,$ we may write the rescaled transplanted flow as a normal graph over $\Sigma$ by
\begin{align*}
    \frac{1}{\sqrt{-t}}\hat F_{\p}(\varphi_t(x),t)
    =F(x)+h_{\p}(x,\tau)\N_F(x)
    \,\,\,\text{ for }\,\,
    \tau=-\log(-t).
\end{align*}
The support of $h_{\p}(\cdot,\tau)$ is contained in a compact subset of $\Sigma$ for each fixed $\tau$, since the transplanted flow agrees with the model shrinker outside the cutoff region.

The graph function satisfies an equation of the form
\begin{align}\label{h-equ-forced-box}
    \partial_\tau h_{\p}
    =Lh_{\p}+\mathcal Q_{1,\p}+\mathcal Q_{2,\p}.
\end{align}
Here, $\mathcal Q_{2,\p}$ denotes the nonlinear normal-graph error, including the usual quadratic terms coming from the graph parametrization.  
The term $\mathcal Q_{1,\p}$ contains both the cutoff error from the transplanted flow and the forcing terms.  
More explicitly, we write
\begin{equation}
	\label{e:cutoff-error-box}
\begin{split}
	\mathcal Q_{1,\p}
	:=& \sqrt{-t}\,
	\frac{
		\ppair{\bigl(\partial_t\hat F_{\p}-\Delta^{g_0}\hat F_{\p})(\varphi_t(x),t),\N_{F_1}(x)}}
	{\ppair{\N_{F_1}(x),\N_{F}(x)}}\\
	= & \, Y_0(x,\tau)
	+Y_1(h_{\p},\tilde r\nabla h_{\p},x,\tau)h_{\p}\\
	& +Y_2^a(h_{\p},\tilde r\nabla h_{\p},x,\tau)\nabla_a h_{\p}
	+Y_3^{ab}(h_{\p},\tilde r\nabla h_{\p},x,\tau)\nabla^2_{ab}h_{\p}.
\end{split}
\end{equation}
The denominator in \eqref{e:cutoff-error-box} is harmless as long as the graph remains in the box.  
The corresponding angle correction is included in $\mathcal Q_{2,\p}$.  
Assumption~\ref{t:rmcf} gives, for $\delta_0>0$ as small as needed,
\begin{align}\label{e:forced-smallness-box}
    \abs{Y_3^{ab}}&\leq\delta_0,\notag\\
    \abs{Y_0}+f^{\frac{1}{2}}\abs{\nabla Y_0}
    &\leq C\delta_0e^{-\frac12(\tau+\tau_0)},\\
    \abs{\mathcal Q_{1,\p}}
    &\leq C\delta_0e^{-\frac12(\tau+\tau_0)}
    \pr{1+e^{-\frac12(\tau+\tau_0)}\abs{h_{\p}}+\abs{\nabla h_{\p}}+\abs{\nabla^2h_{\p}}}.\notag
\end{align}

We start defining the box for which we will apply a topological argument as in \cite{Sto, LZ}.
For any $h\in L^2_W(\Sigma)$, define its unstable and stable projections by
\begin{align*}
    h_u
    :={}
    \sum_{j=1}^m
    \langle h,h_j\rangle_{L^2_W}h_j,
    \,\,\text{ and }\,\,
    h_s
    :={}
    \sum_{j=m+1}^{\infty}
    \langle h,h_j\rangle_{L^2_W}h_j
    =h-h_u.
\end{align*}
We define the box
\[
    \mathcal B[\lambda_*,\mu_u,\mu_s,\ep_0,\ep_1,\ep_2,t_1]
\]
to be the set of all functions
\[
    h:\Sigma\times[0,-\log(-t_1)]\to\mathbb R
\]
satisfying the following conditions.

\begin{enumerate}
    \item For every $\tau\in[0,-\log(-t_1)]$, the function
    $h(\cdot,\tau)$ is compactly supported in $\Sigma$.

    \item For every $\tau\in[0,-\log(-t_1)]$,
    \begin{align*}
        \|h_u(\cdot,\tau)\|_{L^2_W}
        \leq
        \mu_u e^{-\lambda_*(\tau+\tau_0)}, \qquad
        \|h_s(\cdot,\tau)\|_{L^2_W}
        \leq
        \mu_s e^{-\lambda_*(\tau+\tau_0)}.
    \end{align*}

    \item For every $\tau\in[0,-\log(-t_1)]$, the pointwise bounds
    \begin{align*}
        |h|_{\bar g}(x)
        \leq
        \ep_0\tilde r(x), \qquad
        |\nabla h|_{\bar g}(x)
        \leq
        \ep_1, \qquad
        |\nabla^2 h|_{\bar g}(x)
        \leq
        \ep_2
    \end{align*}
    hold on $\Sigma$.
\end{enumerate}
As before, $\tilde r$ is a fixed smooth function on $\Sigma$ satisfying $\tilde r\geq1$ and
\(\tilde r(x)=|x|+1\) for \(|x|\geq1.\)
Our goal is to show that, after choosing
\[
    \gamma_0,\bar p,\delta_0\ll1,
    \qquad
    \Gamma_0,\tau_0\gg1,
\]
the graph function $h_{\p}$ can leave the box only through the unstable side.

\begin{Pro}[Exit through the unstable side for the forced flow]\label{exit}
Fix $\lambda_*$ as in \eqref{lambda-restrict} and assume that the forcing term satisfies Assumption~\ref{t:rmcf}.  
After we choose $\delta_0,\gamma_0,\bar p,\varepsilon_0,\varepsilon_1,\varepsilon_2>0$ sufficiently small and $\Gamma_0,\tau_0<\infty$ sufficiently large, the following holds.

Let $h_{\p}$ be the graph function constructed above with
$|\p|\leq\bar p e^{-\lambda_*\tau_0}$ such that
\[
    h_{\p}\notin
    \mathcal B[\lambda_*,\mu_u,\mu_s,\ep_0,\ep_1,\ep_2,0].
\]
Define $t(\p)<0$ to be the largest time such that
\[
    h_{\p}\in
    \mathcal B[\lambda_*,\mu_u,\mu_s,\ep_0,\ep_1,\ep_2,t(\p)].
\]
Then, with $\tau(\p)=-\log(-t(\p))$, one has
\begin{align}\label{e:unstable-exit-forced}
    \|h_{\p,u}(\cdot,\tau(\p))\|_{L^2_W}
    =
    \mu_u e^{-\lambda_*(\tau(\p)+\tau_0)}.
\end{align}
Moreover, for every $\tau>\tau(\p)$ sufficiently close to $\tau(\p)$,
\begin{align}\label{e:hu1-forced}
    \|h_{\p,u}(\cdot,\tau)\|_{L^2_W}
    >
    \mu_u e^{-\lambda_*(\tau+\tau_0)}.
\end{align}
\end{Pro}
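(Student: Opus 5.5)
The plan is to show that at the exit time $\tau(\p)$ none of the box conditions except the unstable $L^2_W$ bound can be saturated. The mechanism is an improvement at each potential exit: each non-unstable constraint will be shown to hold with strictly better constants on $[0,\tau(\p)]$. Continuity of $\tau\mapsto h_{\p}(\cdot,\tau)$ then forces equality in the unstable bound, which is \eqref{e:unstable-exit-forced}. A separate ODE argument for $h_{\p,u}$ gives the strict outward crossing \eqref{e:hu1-forced}.

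Since $h_{\p}$ lies in the closed box on $[0,\tau(\p)]$, we have $\|h_{\p}\|_{L^2_W}\le(\mu_u+\mu_s)e^{-\lambda_*(\tau+\tau_0)}$. We also have the pointwise bounds $\ep_0\tilde r,\ep_1,\ep_2$. The first step is to check the hypotheses of Theorem \ref{impro} with $\mu=\mu_u+\mu_s$.

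\begin{itemize}
\item The support condition holds because the transplanted flow equals $\Sigma$ outside $\{f\le\Gamma_0e^{\tau+\tau_0}\}$.
\item The decomposition of $\mathcal Q_{1,\p}$, including the cutoff error, satisfies \eqref{e:admissible-Y0}--\eqref{e:admissible-Y3}. For the cutoff error this will use the exterior estimate \eqref{e:exterior-regularity} on the transplantation annulus, where the transplanted flow is $O(\Gamma_0^{-1/2})$-close to the cone.
\item For the initial data, $h_{\p}(\cdot,0)$ is the truncated combination $\eta(f/\gamma_0e^{\tau_0})\sum p_ih_i$. Polynomial growth of the $h_i$ and \eqref{e:perturb-size-forced-box} give $K_{\rm in}\lesssim\bar p\gamma_0^{-c}$. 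More precisely, the weight $\tilde r^{-2\lambda_*-1+j}$ against the cutoff scale $\sqrt{\gamma_0}e^{\tau_0/2}$ produces only factors of $e^{\lambda_*\tau_0}$, and these are absorbed by $|\p|\le\bar pe^{-\lambda_*\tau_0}$.
\end{itemize}

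Theorem \ref{impro} then yields $|\na^\ell h_{\p}|\le Wq(\Theta)\tilde r^{2\lambda_*+1-\ell}e^{-\lambda_*(\tau+\tau_0)}$ for $\ell=0,1,2$. On the support, $\tilde r^2\lesssim\Gamma_0e^{\tau+\tau_0}$, so $\tilde r^{2\lambda_*}e^{-\lambda_*(\tau+\tau_0)}\lesssim\Gamma_0^{\lambda_*}$. Choosing $\Theta$ small, namely $\mu_u,\mu_s,\bar p,\delta_0$ small relative to $\ep_i$ and $\Gamma_0$, makes the pointwise bounds $\ep_0\tilde r/2$, $\ep_1/2$, $\ep_2/2$. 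This is the order of choices: first fix $\ep_i$, then $\Gamma_0$, then $\mu$'s, $\bar p,\gamma_0,\delta_0$, and finally $\tau_0$.

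The second step is the stable bound. Project \eqref{h-equ-forced-box} onto the modes $j>m$ and use $\lambda_j\ge\lambda_{m+1}>\lambda_*$. This gives
\[
\tfrac{d}{d\tau}\|h_s\|_{L^2_W}\le-\lambda_{m+1}\|h_s\|_{L^2_W}+\|\mathcal Q_{1,\p}+\mathcal Q_{2,\p}\|_{L^2_W}.
\]
The quadratic error satisfies $\|\mathcal Q_{2,\p}\|_{L^2_W}\lesssim\ep\|h\|_{H^2_W}$. Via Lemma \ref{hIntegral} and the $C^2$ improvement, this is $\lesssim\ep(\mu_u+\mu_s)e^{-\lambda_*(\tau+\tau_0)}$, with the Gaussian weight killing the tail region. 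The forcing contribution is $\|\mathcal Q_{1,\p}\|_{L^2_W}\lesssim\delta_0e^{-\frac12(\tau+\tau_0)}$, and this decays faster than $e^{-\lambda_*\tau}$ precisely because $\lambda_*<\frac12$. At $\tau=0$ we have $\|h_s(0)\|_{L^2_W}\lesssim\bar p e^{-\lambda_*\tau_0}\gamma_0^{N}e^{-c\gamma_0e^{\tau_0}}$, since only the cutoff tail contributes. Integrating gives $\|h_s\|_{L^2_W}\le\frac{\mu_s}{2}e^{-\lambda_*(\tau+\tau_0)}$ provided $C(\ep\mu_u+\delta_0+\bar p\,o(1))\le\mu_s(\lambda_{m+1}-\lambda_*)/4$. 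Thus the stable constraint cannot be active.

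The third step handles the unstable side. Since $h_{\p}\notin\mathcal B[\dots,0]$, the exit time is finite. By the maximality of $t(\p)$ and the strict improvements above, the only condition that can fail immediately after $\tau(\p)$ is the unstable one. This gives equality in \eqref{e:unstable-exit-forced}. For \eqref{e:hu1-forced}, compute at $\tau(\p)$:
\[
\tfrac{d}{d\tau}\|h_u\|_{L^2_W}\ge-\lambda_m\|h_u\|_{L^2_W}-C(\ep\mu+\delta_0e^{-(\frac12-\lambda_*)(\tau+\tau_0)})e^{-\lambda_*(\tau+\tau_0)}.
\]
This is strictly greater than $-\lambda_*\mu_ue^{-\lambda_*(\tau+\tau_0)}$ once $C(\ep\mu+\delta_0)<(\lambda_*-\lambda_m)\mu_u$. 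Here $\mu_s\le\mu_u$ is arranged so that the $\ep\mu_s$ term is absorbed. Hence $\|h_u\|_{L^2_W}e^{\lambda_*(\tau+\tau_0)}$ is strictly increasing through $\tau(\p)$.

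The main obstacle will be the first step: verifying that the transplanted error fits Assumption~\ref{t:rmcf} uniformly, and that $K_{\rm in}$ is small with constants independent of $\tau_0$. Both must hold so that Theorem \ref{impro} applies up to the exit time.
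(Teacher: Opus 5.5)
Your proposal follows the paper's proof essentially step for step. Theorem~\ref{impro}, with the cutoff error controlled through the exterior/pseudolocality estimate, rules out the pointwise faces. The projected inequality on the modes $j>m$, together with the exponentially small initial stable part, rules out the stable face. The unstable inequality with $\lambda_m<\lambda_*$ then gives both \eqref{e:unstable-exit-forced} and the strict outward crossing \eqref{e:hu1-forced}.

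The one real deviation is your linear bound $\|\mathcal Q_{2,\p}\|_{L^2_W}\lesssim\ep\|h\|_{H^2_W}$. It forces side conditions such as $C\ep\mu_u\lesssim\mu_s\le\mu_u$ and $\delta_0\lesssim\mu_s$, where $C$ depends on $\Gamma_0$. So $\Gamma_0$ should be fixed before the $\ep_i$, not after; $\ep_*$ and $W$ in Theorem~\ref{impro} also depend on it. The paper instead uses the quadratic bound $\|\mathcal Q_{2,\p}\|_{L^2_W}\le Ce^{-2\lambda_*(\tau+\tau_0)}$ coming from the improved pointwise estimates. Every rescaled error is then $O(e^{-\lambda_*\tau_0}+\delta_0e^{-(\frac12-\lambda_*)\tau_0})$ and disappears by taking $\tau_0$ large last, with no relation between $\mu_u$ and $\mu_s$ needed.
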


\begin{proof}
Short-time existence for the forced mean curvature flow and continuous dependence on the initial hypersurface imply that $t(\p)>-1$ when $\bar p$ and $\gamma_0$ are sufficiently small.

We first estimate the cutoff error.  
From \eqref{e:perturb-size-forced-box} and the curvature decay \eqref{cap-curv-decay} on the capping region, the initial hypersurface satisfies
\begin{align*}
    |\nabla^mA|(x,0)
    \leq
    D_m\pr{\frac{1}{\Gamma_0}}^{\frac{m+1}{2}}
\end{align*}
on $M\setminus\{f\leq \frac{\Gamma_0}2\}.$
The forcing term is perturbative under Assumption~\ref{t:rmcf}.  
Hence, the interior estimates and the pseudolocality estimates \cite{EH,INS}, applied with the forcing bounds, give
\begin{align*}
    |\nabla^mA|(x,t)
    \leq
    C_m\pr{\frac{1}{\Gamma_0}}^{\frac{m+1}{2}}
\end{align*}
for
$(x,t)\in \pr{M\setminus\{f\leq \Gamma_0/2\}}\times [-1,t(\p)].$ 
Consequently, while $h_{\p}$ stays in the box, the error $\mathcal E_{\p}$ contributed by cutoff in rescaled variables is controlled by 
\begin{align}\label{e:E-support-forced}
    \supp \mathcal E_{\p}(\cdot,\tau)
    &\subseteq
    \set{\frac{\Gamma_0e^{\tau+\tau_0}}{2}\leq f\leq\Gamma_0e^{\tau+\tau_0}},\notag\\
    |\mathcal E_{\p}(\cdot,\tau)|
    &\leq
    C\pr{\frac{1}{\Gamma_0e^{\tau+\tau_0}}}^{\frac12},\\
    |\nabla\mathcal E_{\p}(\cdot,\tau)|
    &\leq
    C\pr{\frac{1}{\Gamma_0e^{\tau+\tau_0}}}.\notag
\end{align}
These fit into Assumption \ref{t:rmcf} for $\tau_0$ sufficiently large.
Thus, the global $C^2$-improvement theorem, Theorem~\ref{impro}, applies to the forced equation \eqref{h-equ-forced-box}, and after we take $\tau_0$ large and the parameters~$\varepsilon_i$'s in the construction small, the pointwise part of the box is improved on $[0,\tau(\p)]$:
\begin{align}\label{e:C2-improve-forced-box}
    |h_{\p}|&\leq \frac12\ep_0\tilde r,
    &
    |\nabla h_{\p}|&\leq \frac12\ep_1,
    &
    |\nabla^2 h_{\p}|&\leq \frac12\ep_2.
\end{align}
In particular, $h_{\p}$ cannot exit through the $C^0$, $C^1$, or $C^2$ faces of the box.

We next record the $L^2_W$ size of the remaining errors.  
The nonlinear graph terms are quadratic under the improved estimates, and the angle correction in \eqref{e:cutoff-error-box} is also quadratic in the graph size.  
Therefore,
\begin{align}\label{e:nonlinear-error-forced}
    \|\mathcal Q_{2,\p}\|_{L^2_W}
    \leq
    C e^{-2\lambda_*(\tau+\tau_0)}.
\end{align}
For the forcing contribution, \eqref{e:forced-smallness-box} gives
\begin{align}\label{e:forcing-error-forced}
    \|\mathcal Q_{1,\p}-\mathcal E_{\p}\|_{L^2_W}
    \leq
    C\delta_0 e^{-\frac12(\tau+\tau_0)}.
\end{align}
Combining \eqref{e:C2-improve-forced-box}, \eqref{e:nonlinear-error-forced}, \eqref{e:forcing-error-forced}, 
and projecting \eqref{h-equ-forced-box} onto the eigenspaces of $L$, we obtain
\begin{equation}
	\label{e:unstable-ode-forced}
\begin{split}
	\frac{d}{d\tau}
	\|e^{\lambda_*(\tau+\tau_0)}h_{\p,u}\|_{L^2_W}
	&\geq
	(\lambda_* -\lambda_m-C\delta_0)
	\|e^{\lambda_*(\tau+\tau_0)}h_{\p,u}\|_{L^2_W}
	-\omega_{\tau_0,\delta_0}
	\,\,\text{ and}\\
	\frac{d}{d\tau}
	\|e^{\lambda_*(\tau+\tau_0)}h_{\p,s}\|_{L^2_W}
	&\leq
	(\lambda_* -\lambda_{m+1}+C\delta_0)
	\|e^{\lambda_*(\tau+\tau_0)}h_{\p,s}\|_{L^2_W}
	+\omega_{\tau_0,\delta_0},
\end{split}
\end{equation}
where the error is given by
\[\omega_{\tau_0,\delta_0}
:=C\pr{e^{-\lambda_*\tau_0}+\delta_0e^{-(\frac12-\lambda_*)\tau_0}}.\]
Since $0 < \lambda_*<1/2$, this error tends to zero as $\tau_0\to\infty$ once $\delta_0$ is fixed.

We now choose $\delta_0 > 0$ so small that
\begin{align*}
    \lambda_* -\lambda_m-C\delta_0>0
    \,\,\text{ and }\,\,
    \lambda_{m+1}-\lambda_*-C\delta_0>0.
\end{align*}
The initial data satisfy
\begin{equation}
	\label{e:initial-forced-box}
\begin{split}
	\abs{\la h_{\p}(\cdot,0),h_j\ra_{L^2_W}-p_j}
	&\leq
	Ce^{-\frac{\gamma_0}{100}e^{\tau_0}}
	\qquad\text{for }j\leq m,\\
	\|h_{\p,s}(\cdot,0)\|_{L^2_W}
	&\leq
	Ce^{-\frac{\gamma_0}{100}e^{\tau_0}}.
\end{split}
\end{equation}
Using the stable inequality in \eqref{e:unstable-ode-forced}, \eqref{e:initial-forced-box}, and the smallness of $\omega_{\tau_0,\delta_0}$, we obtain
\begin{align}\label{e:stable-strict-forced}
    \|h_{\p,s}(\cdot,\tau(\p))\|_{L^2_W}
    <
    \mu_s e^{-\lambda_*(\tau(\p)+\tau_0)}
\end{align}
for $\tau_0$ large enough.
Together with the improved pointwise bounds \eqref{e:C2-improve-forced-box}, this shows that the first exit cannot occur through the stable side or through the $C^2$ part of the box.  
Therefore, the only possible exit face is the unstable one, which proves \eqref{e:unstable-exit-forced}.

It remains to show that the exit is outward.  
At $\tau=\tau(\p)$, \eqref{e:unstable-exit-forced} gives
\[
    \|e^{\lambda_*(\tau(\p)+\tau_0)}h_{\p,u}(\cdot,\tau(\p))\|_{L^2_W}=\mu_u.
\]
Choosing $\tau_0$ larger, if necessary, so that
\[
    \omega_{\tau_0,\delta_0}
    <\frac12(\lambda_* -\lambda_m-C\delta_0)\mu_u,
\]
the first inequality in \eqref{e:unstable-ode-forced} implies that the rescaled unstable norm is increasing at $\tau(\p)$.  
Hence, \eqref{e:hu1-forced} holds for all $\tau>\tau(\p)$ sufficiently close to $\tau(\p)$.
\end{proof}

We next choose the perturbation parameters by a topological argument.
The proof is unchanged at the topological level; the only analytic input needed
for the forced flow is the strict outward crossing in Proposition~\ref{exit}.

\begin{Pro}[Choice of a perturbation remaining in the box]\label{perturbation}
Under the parameter choices in Proposition~\ref{exit}, and after taking
$\bar p<\mu_u/2$, there exists
\[
    \p\in \overline{B_{\bar p e^{-\lambda_*\tau_0}}(0)}\subset\mathbb R^m
\]
such that
\begin{align}\label{e:good-perturbation-forced}
    h_{\p}
    \in
    \mathcal B[\lambda_*,\mu_u,\mu_s,\ep_0,\ep_1,\ep_2,0].
\end{align}
\end{Pro}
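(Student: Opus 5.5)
We argue by contradiction via the Wa\.{z}ewski retraction principle. Suppose that for every $\p\in \overline{B}:=\overline{B_{\bar p e^{-\lambda_*\tau_0}}(0)}$ the flow eventually leaves the box, i.e. $h_{\p}\notin\mathcal B[\lambda_*,\mu_u,\mu_s,\ep_0,\ep_1,\ep_2,0]$. This includes the case where the flow becomes singular or stops being graphical: the continuation criterion in Assumption~\ref{t:rmcf}(3) and the improved $C^2$ bounds \eqref{e:C2-improve-forced-box} show that this cannot happen while $h_{\p}$ remains in the box. Then the exit time $\tau(\p)$ of Proposition~\ref{exit} is defined for every $\p$. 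By that proposition the exit occurs through the unstable face, so \eqref{e:unstable-exit-forced} holds. We then define
\[
\Phi(\p):=\frac{\bar p}{\mu_u}\,e^{\lambda_*(\tau(\p)+\tau_0)}\,e^{-\lambda_*\tau_0}\,\pi_u\bigl(h_{\p}(\cdot,\tau(\p))\bigr)\in \partial B,
\]
where $\pi_u(h)=(\langle h,h_j\rangle_{L^2_W})_{j\le m}\in\mathbb R^m$. The normalization places $\Phi(\p)$ on the sphere of radius $\bar p e^{-\lambda_*\tau_0}$.

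The first key step is continuity of $\tau(\p)$, hence of $\Phi$. We use continuous dependence of the forced flow on initial data (Assumption~\ref{t:rmcf}(3)), which gives continuity of $\p\mapsto h_{\p}$ in $C^2$ on compact time intervals, and therefore continuity of the map $\p\mapsto \|h_{\p,u}(\tau)\|_{L^2_W}e^{\lambda_*(\tau+\tau_0)}$. Upper semicontinuity of $\tau(\p)$ uses the strict outward crossing \eqref{e:hu1-forced}: slightly after $\tau(\p)$ the unstable norm strictly exceeds $\mu_u$, so nearby parameters have also exited. Lower semicontinuity uses the strict interior bounds at times before $\tau(\p)$. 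On $[0,\tau(\p)-\eta]$ the stable norm and pointwise quantities are strictly inside by \eqref{e:stable-strict-forced} and \eqref{e:C2-improve-forced-box}. The unstable norm is strictly below $\mu_u$ on that interval, since it is a first exit and the differential inequality \eqref{e:unstable-ode-forced} forbids touching and returning. By compactness and continuity, nearby $\p$ remain in the box up to $\tau(\p)-\eta$. This is the step I expect to be the main obstacle: it requires the strict-inequality bookkeeping to be uniform, and it requires continuity of the graph representation, including the diffeomorphisms $\varphi_t$ and the cutoffs, with respect to $\p$.

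The second step is to show that $\Phi|_{\partial B}$ is homotopic to the identity of the sphere, so that it has nonzero degree. Take $|\p|=\bar p e^{-\lambda_*\tau_0}$. By \eqref{e:initial-forced-box}, $\pi_u(h_{\p}(0))=\p+O(e^{-\gamma_0 e^{\tau_0}/100})$, so
\[
\|h_{\p,u}(0)\|e^{\lambda_*\tau_0}\approx \bar p.
\]
We take $\mu_u$ equal to $\bar p$ up to this tiny error, adjusting the box normalization (or the radius of $B$) so that boundary parameters start on or just outside the unstable face. By \eqref{e:hu1-forced}, applied at $\tau=0$, we then get $\tau(\p)=0$ on $\partial B$, and $\Phi(\p)$ equals the normalization of $\pi_u(h_{\p}(0))$, which is a small perturbation of $\p$. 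Hence $\Phi|_{\partial B}$ is homotopic to the identity through the straight-line homotopy, which avoids $0$. If instead one insists on $\bar p<\mu_u/2$, we replace $\Phi$ by its composition with the radial map on the annulus between the two radii. Either way, $\Phi:\overline B\to\partial B$ is continuous and restricts to a map of degree one on $\partial B$. This contradicts the no-retraction theorem (Brouwer), so some $\p\in\overline B$ has $h_{\p}$ in the box for all $\tau\ge0$, which is \eqref{e:good-perturbation-forced}.
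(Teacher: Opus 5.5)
Your overall framework is the paper's: contradiction, an exit map, continuity from the strict outward crossing \eqref{e:hu1-forced}, and no-retraction. Your continuity discussion is fine.

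The gap is in the degree step, and it sits exactly at the hypothesis $\bar p<\mu_u/2$. Your proof that $\Phi|_{\partial B}$ has degree one needs the boundary parameters to start on the unstable face, so that $\tau(\p)=0$ and $\Phi(\p)$ is a tiny perturbation of $\p$. That forces $\bar p\approx\mu_u$, which contradicts the hypothesis. It would also only give a good parameter in the larger ball $|\p|\lesssim\mu_u e^{-\lambda_*\tau_0}$, which is not the statement.

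When $\bar p<\mu_u/2$, every boundary parameter starts strictly inside the box, since $e^{\lambda_*\tau_0}\|h_{\p,u}(\cdot,0)\|_{L^2_W}\approx\bar p<\mu_u$. Hence $\tau(\p)>0$, and $\Phi(\p)$ is produced by the nonlinear flow over a positive time interval. Post-composing with a radial map does not compute the degree of this map. Filling in the annulus $\bar p\le e^{\lambda_*\tau_0}|\p|\le\mu_u$ is either circular, since it needs a homotopy from $\Phi|_{\partial B}$ to the identity, which is the claim. Or it needs the annulus parameters to exit, and your contradiction hypothesis on $\overline B$ does not provide that.

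The missing observation is that annulus parameters must exit. Set $X(\tau):=e^{\lambda_*(\tau+\tau_0)}\|h_{\p,u}(\cdot,\tau)\|_{L^2_W}$.
\begin{itemize}
\item The first inequality in \eqref{e:unstable-ode-forced} gives $X'\geq cX-\omega_{\tau_0,\delta_0}$, with $c:=\lambda_*-\lambda_m-C\delta_0>0$, while the solution is in the box.
\item \eqref{e:initial-forced-box} gives $X(0)\geq e^{\lambda_*\tau_0}|\p|-Ce^{\lambda_*\tau_0-\gamma_0e^{\tau_0}/100}$.
\item So once $\omega_{\tau_0,\delta_0}\leq c\bar p/4$, every $\p$ with $e^{\lambda_*\tau_0}|\p|\geq\bar p$ satisfies $X(\tau)\geq X(0)e^{c\tau/2}$ and must leave the box.
\end{itemize}

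You can then run your retraction argument on the ball of radius $2\mu_u e^{-\lambda_*\tau_0}$. Define the exit time as the first time the unstable face is reached (zero if $X(0)\geq\mu_u$), and normalize the exit point by its length. On the outer sphere the exit time is $0$ and the map is close to $\p/|\p|$. The surviving parameter is then forced into $\overline{B_{\bar pe^{-\lambda_*\tau_0}}(0)}$. This uses Proposition~\ref{exit} for $e^{\lambda_*\tau_0}|\p|$ up to about $\mu_u$, i.e.\ $\mu_u$ within the smallness range allowed for $\bar p$. That is harmless, because $\mu_u$ must be small anyway for Theorem~\ref{impro} to apply in the box.

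The paper instead stays on $\partial B$ and asserts that the straight-line homotopy from the exit map to $\mu_u\p/|\p|$ avoids the origin. That is, it asserts the exit point is never antipodal to $\p$. Justifying this requires tracking the individual unstable coefficients over the bounded transit time from radius $\bar p$ to $\mu_u$, not just the norm inequalities. With the annulus observation added, your version of this step is the more transparent one.
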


\begin{proof}
Assume, toward a contradiction, that every
$\p\in \overline{B_{\bar p e^{-\lambda_*\tau_0}}}$ makes $h_{\p}$ leave the box before time $1$.  
For each such $\p$, let $\tau(\p)=-\log(-t(\p))$ be the first exit time
from Proposition~\ref{exit}.  
Define the rescaled exit map
\begin{align*}
    \mathcal F(\p)
    :=
    e^{\lambda_*(\tau(\p)+\tau_0)}
    \big(
    \langle h_{\p}(\cdot,\tau(\p)),h_1\rangle_{L^2_W},
    \ldots,
    \langle h_{\p}(\cdot,\tau(\p)),h_m\rangle_{L^2_W}
    \big)
    \in\RR^m.
\end{align*}
By Proposition~\ref{exit}, $|\mathcal F(\p)|=\mu_u$ for every $\p$ in the closed
parameter ball $\overline{B_{\bar p e^{-\lambda_*\tau_0}}}.$  
Moreover, the strict inequality \eqref{e:hu1-forced}, together
with the continuous dependence of the forced mean curvature flow on the initial hypersurface, implies that $\mathcal F$ is a continuous function on $\overline{B_{\bar p e^{-\lambda_*\tau_0}}}.$

On the boundary of the parameter ball, the initial projection estimates
\eqref{e:initial-forced-box} and the differential inequalities
\eqref{e:unstable-ode-forced} show that $\mathcal F$ has the same degree as the
radial map
\begin{align*}
    \p\longmapsto \mu_u\frac{\p}{|\p|}.
\end{align*}
Equivalently, after increasing $\tau_0$ and decreasing $\delta_0$ if necessary, the straight-line homotopy between these two boundary maps never hits the origin.
Thus, the restriction of $\mathcal F$ to
$\partial B_{\bar p e^{-\lambda_*\tau_0}}(0)$ has degree one as a map to the
sphere of radius $\mu_u$.
This is impossible because $\mathcal F$ is assumed to extend continuously to the
whole closed ball while taking values in the same sphere.  
Hence, the contradiction
assumption is false, so there exists a parameter $\p$ satisfying
\eqref{e:good-perturbation-forced}.
\end{proof}

\begin{proof}[Proof of Theorems~\ref{thm:forced-realization}, \ref{thm:riemannian-realization} and~\ref{thm:rotational-realization} ]
Proposition~\ref{perturbation} proves Theorem~\ref{thm:forced-realization} for an asymptotically conical shrinker.
As indicated in Section \ref{s:rot-riem}, this implies the main theorems~\ref{thm:riemannian-realization} and~\ref{thm:rotational-realization} for an asymptotically conical shrinker based on Propositions~\ref{p:riem-admissible} and \ref{p:rot-admissible}.    
\end{proof}

\section{Prescribing the first order asymptotics}
\label{s:prescribe-first-order}


In this section, we explain how to prescribe the leading order asymptotic of the forced rescaled mean curvature flow by refining the box argument in Section \ref{s:box-forced} and prove Theorem \ref{thm:first-order} for an asymptotically conical shrinker $\Sigma.$

Fix eigenvalues $\lambda_{m-1}<\lambda_{m}=\cdots=\lambda_M<\lambda_{M+1}=:\lambda_+$
of $L$ 
and assume
\begin{equation}
\label{e:lambda-first-order-range}
0<\lambda_{m}<\frac12.
\end{equation}
Set
\[
E_<
:=
\bigoplus_{\lambda_j<\lambda_{m}}E_{\lambda_j},
\quad
E_0
:=
\ker(L+\lambda_{m}),
\quad
E_>
:=
\overline{
	\bigoplus_{\lambda_j>\lambda_{M}}E_{\lambda_j}
}.
\]
We also write
\(E_\leq:=E_<\oplus E_0\),
and let \(\Pi_\leq:L^2_W\to E_\leq\)
and \(\Pi_>:L^2_W\to E_>\) be the corresponding orthogonal projections.

We use the forced graph equation in the form
\begin{equation}
\label{e:forced-graph-prescribe}
\begin{split}
\partial_\tau u
= &
Lu
+\mathcal E\\
& +Y_0
+Y_1(u,\tilde r\nabla u,x,\tau)u
+Y_2^a(u,\tilde r\nabla u,x,\tau)\nabla_a u\\
&
+Y_3^{ab}(u,\tilde r\nabla u,x,\tau)\nabla^2_{ab}u
+Q_2(u).
\end{split}
\end{equation}
Here, \(\mathcal E\) denotes the cutoff error from the compact gluing
construction, and \(Q_2(u)\) is the quadratic Euclidean graph
nonlinearity.  The construction gives
\[
\operatorname{supp}u(\cdot,\tau)
\subset\{f\leq\Gamma_0e^{\tau+\tau_0}\}.
\]
Extend the coefficients using a cutoff which equals one on this set
and is supported where
\(\tilde f\leq\frac14\Gamma_0^2e^{\tau+\tau_0}\); for
\(\Gamma_0\geq16\), this lies in the region of
Assumption~\ref{t:rmcf}.  Absorbing the transition errors into
\(\mathcal E\), we have, uniformly in the small graphical regime and
for \(2q+k\leq4n+6\),
\begin{align}
|\partial_\tau^q\nabla^k\mathcal E|
&\leq C_{q,k}\delta_0e^{-\frac12(\tau+\tau_0)}
\tilde f^{-k/2},
\label{e:E-pointwise-prescribe}\\
\|\mathcal E(\tau)\|_{L^2_W}
&\leq C_Ne^{-N(\tau+\tau_0)}
\quad\text{for every }N>0.
\label{e:E-Gaussian-prescribe}
\end{align}
We retain the notation \(Y_j\) for the extended coefficients.  If
\(\eta_0\) denotes the constant in \eqref{e:admissible-Y3} and
\(\nabla_{a,p}\) denotes fibre differentiation, then
\begin{equation}
\label{e:prescribe-Y3}
\tilde r^\ell
|\partial_\tau^q\nabla_x^k\nabla_{a,p}^\ell Y_3|
\leq C_{q,k,\ell}\eta_0e^{-(\tau+\tau_0)}
\tilde f^{1-\frac{k}{2}}
\leq\delta_0\tilde f^{-\frac{k}{2}},
\end{equation}
for \(2q+k+\ell\leq4n+6\), uniformly on the admissible fibre ball,
provided \(C\eta_0\Gamma_0^2\leq\delta_0\).

Define
\[
Y^{ab}_{3,0}(x,\tau):=Y^{ab}_3(0,0,x,\tau),
\qquad
P_\tau v:=Y^{ab}_{3,0}(x,\tau)\nabla^2_{ab}v,
\qquad
\mathcal D(P_\tau)=H^2_W(\Sigma).
\]
For \(V:=E_\leq\), the finite-dimensional eigenfunction estimates
give
\begin{equation}
\label{e:finite-spectral-estimates}
\begin{gathered}
|\nabla^jv|\leq C_V\|v\|_{L^2_W}\tilde r^{2\lambda_m+1-j},
\qquad
\|\tilde f^q\nabla^jv\|_{L^2_W}
\leq C_{V,j,q}\|v\|_{L^2_W},
\\
v\in V,\qquad 0\leq j\leq4n+4,\qquad q\geq0.
\end{gathered}
\end{equation}
See Section~\ref{prel: shrinker and linear} and
\cite[Theorem~2.4]{LZ}.
In particular, for every \(\varphi\in E_0\),
\begin{equation}
\label{e:Ptau-E0-automatic}
\|P_\tau\varphi\|_{L^2_W}
\leq C\eta_0e^{-(\tau+\tau_0)}\|\varphi\|_{L^2_W}.
\end{equation}

Choose
\begin{equation}
\label{e:beta-choice-prescribe}
\lambda_{m}
<
\beta
<
\min\left\{
\lambda_+,\,
2\lambda_{m},\,
\frac12
\right\}.
\end{equation}
Such a \(\beta\) exists by \eqref{e:lambda-first-order-range}.

\begin{thm}
	\label{t:prescribe-first-asymptotic}
	For every \(\psi\in E_0\), after choosing \(\delta_0,\gamma_0>0\)
	sufficiently small, \(\Gamma_0,\tau_0\) sufficiently large, and an
	additional force satisfying Assumption~\ref{t:rmcf} for these choices
	with \(C\eta_0\Gamma_0^2\leq\delta_0\),
	there exists a closed initial hypersurface such that the associated
	transplanted rescaled flow has a compactly supported normal-graph
	function \(u(\tau)\) over \(\Sigma\) satisfying
	\begin{equation}
	\label{e:prescribed-first-asymptotic}
	u(\tau)
	= e^{-\lambda_{m} (\tau+\tau_0)} \psi
	+ z(\tau),
	\end{equation}
	where
	\(\|z(\tau)\|_{L^2_W}
	\leq
	C e^{-\beta(\tau+\tau_0)}\)
	for all $\tau\geq 0.$
	Consequently,
	\begin{equation*}
	\lim_{\tau\to\infty}
	e^{\lambda_m(\tau+\tau_0)}u(\tau)
	=
	\psi
	\quad
	\text{in }L^2_W.
	\end{equation*}
\end{thm}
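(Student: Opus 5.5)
The plan is to rerun the Wa\.{z}ewski box argument of Section~\ref{s:box-forced} with a refined, three-component box. We write
\[
u(\tau)=e^{-\lambda_m(\tau+\tau_0)}\psi+z(\tau)
\]
and impose decay on the deviation $z$ from the prescribed profile, not on $u$ itself.

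\textbf{Setting up the box.} Perturb the capped hypersurface $M$ of Section~\ref{s:box-forced} by
\[
h_{\mathbf p}=\eta\!\left(\tfrac{f}{\gamma_0e^{\tau_0}}\right)\Bigl(e^{-\lambda_m\tau_0}\psi+\sum_{\lambda_j<\lambda_m}p_jh_j\Bigr),
\]
so that only the modes in $E_<$ are free parameters. The $E_0$ component is fixed at the prescribed value. The box consists of graph functions $u$ with the following properties:
\begin{itemize}
\item[(i)] $\|\Pi_<z(\tau)\|_{L^2_W}\le\mu_ue^{-\beta(\tau+\tau_0)}$;
\item[(ii)] $\|\Pi_0z(\tau)\|_{L^2_W}\le\mu_0e^{-\beta(\tau+\tau_0)}$;
\item[(iii)] $\|\Pi_>z(\tau)\|_{L^2_W}\le\mu_se^{-\beta(\tau+\tau_0)}$;
\item[(iv)] the pointwise $C^0/C^1/C^2$ bounds of the old box, with $\lambda_*$ replaced by $\lambda_m$ (or any fixed $\lambda_*\in(\lambda_m,\beta)$ off the spectrum).
\end{itemize}
Since $\|u(\tau)\|_{L^2_W}\lesssim e^{-\lambda_m(\tau+\tau_0)}$ inside the box, Theorem~\ref{impro} applies with rate $\lambda_m$ and gives the improved pointwise bounds exactly as in Proposition~\ref{exit}. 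The needed estimates on $\psi$ are supplied by \eqref{e:finite-spectral-estimates}. Faces (iv) therefore cannot be exit faces.

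\textbf{ODE inequalities for $z$.} The function $z$ satisfies
\[
\partial_\tau z=Lz+P_\tau(e^{-\lambda_m(\tau+\tau_0)}\psi)+\mathcal E+Y_0+(\text{terms in }Y_1,Y_2,Y_3\text{ acting on }u)+Q_2(u),
\]
because $L\psi=-\lambda_m\psi$ cancels the time derivative of the profile. The error terms are bounded in $L^2_W$ as follows:
\begin{itemize}
\item $Q_2(u)$ is quadratic, so it contributes $O(e^{-2\lambda_m(\tau+\tau_0)})$. This is $o(e^{-\beta(\tau+\tau_0)})$ because $\beta<2\lambda_m$, and this is exactly where that constraint is used.
\item By \eqref{e:Ptau-E0-automatic}, $P_\tau\psi$ contributes $O(\eta_0e^{-(1+\lambda_m)(\tau+\tau_0)})$.
\item By \eqref{e:E-Gaussian-prescribe}, $\mathcal E$ is negligible.
\item $Y_0$ contributes $\delta_0e^{-\frac12(\tau+\tau_0)}$, which is $o(e^{-\beta\tau})$ since $\beta<\tfrac12$.
\item The $Y_1,Y_2$ terms are $O(\delta_0e^{-\frac12(\tau+\tau_0)}e^{-\lambda_m(\tau+\tau_0)})$.
\item The $Y_3$ term, after splitting off $P_\tau$, is bounded using the $C^2$ bound and \eqref{e:prescribe-Y3}.
\end{itemize}
Projecting onto the three spectral pieces then gives the following differential inequalities for the rescaled norms $e^{\beta(\tau+\tau_0)}\|\Pi z\|_{L^2_W}$:
\begin{itemize}
\item on $E_<$, growth at rate at least $\beta-\lambda_{m-1}-C\delta_0>0$ up to a small error $\omega$;
\item on $E_>$, decay at rate $\lambda_+-\beta-C\delta_0>0$ up to $\omega$;
\item on $E_0$, rate exactly $\beta-\lambda_m>0$ up to $\omega$, so the $E_0$ face is an \emph{outward} face as well.
\end{itemize}

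\textbf{The main obstacle.} The $E_0$ direction is expanding for $z$ but is not a free parameter. The first thing I would try is to enlarge the parameter space to $\mathbf p\in E_<\oplus E_0$, with the $E_0$ component a small correction $q$ around $e^{-\lambda_m\tau_0}\psi$. Both (i) and (ii) are then exit faces, and the exit map
\[
\mathcal F(\mathbf p)=e^{\beta(\tau(\mathbf p)+\tau_0)}\,\Pi_\le z(\tau(\mathbf p))
\]
lands on the boundary of the product box $\{|\cdot_<|\le\mu_u,\ |\cdot_0|\le\mu_0\}$. Continuity of $\mathcal F$ follows from strict outward crossing, exactly as in Proposition~\ref{exit}. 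On the boundary of the parameter box, $\mathcal F$ is homotopic to the identity, using \eqref{e:initial-forced-box}-type estimates. The exit map would therefore be a retraction of the box onto its boundary, which is impossible, just as in Proposition~\ref{perturbation}.

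Two points need care in this step. First, the initial correction must be of size $\mu e^{-\beta\tau_0}$, so that it is still $o(e^{-\lambda_m\tau_0})$ and the leading coefficient stays $\psi$. Second, the stable-side estimate $\|\Pi_>z\|_{L^2_W}<\mu_se^{-\beta(\tau+\tau_0)}$ at the exit time needs the initial stable part to be exponentially small, which \eqref{e:initial-forced-box} provides. I would also check carefully that $\omega$ can be made smaller than $(\beta-\lambda_m)\mu_0/2$ uniformly in $\tau$. This reduces to the exponent gaps $2\lambda_m-\beta>0$ and $\tfrac12-\beta>0$, together with taking $\tau_0$ large.

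Once a parameter remaining in the box for all $\tau$ is found, the bound $\|z(\tau)\|_{L^2_W}\le Ce^{-\beta(\tau+\tau_0)}$ holds by the box conditions (i)--(iii), which gives \eqref{e:prescribed-first-asymptotic}.
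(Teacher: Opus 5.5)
Your overall architecture matches the paper's proof. You take free parameters in $E_\le=E_<\oplus E_0$, with the $E_0$ part a correction of size $\mu e^{-\beta\tau_0}$ around $e^{-\lambda_m\tau_0}\psi$. You impose box conditions on $e^{\beta(\tau+\tau_0)}\Pi_\le z$ and $e^{\beta(\tau+\tau_0)}\Pi_> z$, and you conclude by a no-retraction argument. The paper uses a ball in $E_\le$ and inverts $T_{\tau_0}$ so that $\Pi_\le z_a(0)=e^{-\beta\tau_0}a$ exactly, which makes the exit map the identity on the boundary; your product box with a homotopy is an acceptable variant.

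\textbf{The gap: closing the pointwise faces.} You apply Theorem~\ref{impro} at rate $\lambda_m$, or at some $\lambda_*\in(\lambda_m,\beta)$, and neither works.
\begin{itemize}
\item For $\lambda_*\in(\lambda_m,\beta)$, the hypothesis $\|u(\tau)\|_{L^2_W}\le\mu e^{-\lambda_*(\tau+\tau_0)}$ fails, because the $E_0$ component of $u$ decays only like $e^{-\lambda_m\tau}$.
\item For $\lambda_*=\lambda_m$, the hypothesis $\lambda_*\notin\operatorname{spec}(L)$ fails. More seriously, the smallness $\Theta=\mu+K_{\rm in}+\delta_0\le\Theta_*$ fails, since at this rate both $\mu$ and $K_{\rm in}$ are bounded below by the size of $\psi$, which is arbitrary.
\end{itemize}
This is not a removable technicality. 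Near any $x_0$ with $\psi(x_0)\ne0$ one has $u\approx e^{-\lambda_m(\tau+\tau_0)}\psi$. So no bound $|u|\le\varepsilon_{\rm pt}\tilde r^{2\lambda_m+1}e^{-\lambda_m(\tau+\tau_0)}$ can hold with $\varepsilon_{\rm pt}$ below $|\psi(x_0)|\tilde r(x_0)^{-2\lambda_m-1}$. With a constant of size $\|\psi\|$, the bound near the edge $f\sim\Gamma_0e^{\tau+\tau_0}$ of the support is only $C\|\psi\|\Gamma_0^{\lambda_m}\tilde r$, which does not close $|u|\le\varepsilon_0\tilde r$.

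\textbf{The missing device: an intermediate rate.} Choose $\alpha\notin\operatorname{spec}(L)$ with $\beta/2<\alpha<\lambda_m$.
\begin{itemize}
\item Inside the box, $\|u(\tau)\|_{L^2_W}\le c(\tau_0)e^{-\alpha(\tau+\tau_0)}$ with $c(\tau_0)=\|\psi\|_{L^2_W}e^{-(\lambda_m-\alpha)\tau_0}+2\mu e^{-(\beta-\alpha)\tau_0}\to0$.
\item The scale-invariant initial norm at rate $\alpha$ is $O(e^{-(\lambda_m-\alpha)\tau_0}+\gamma_0^{\lambda_m-\alpha})$ plus smaller terms. The $\gamma_0$ term comes from the cutoff edge $\tilde f\sim\gamma_0e^{\tau_0}$. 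This norm is therefore small after letting $\tau_0\to\infty$ and then $\gamma_0\to0$.
\item Theorem~\ref{impro} then gives $|\nabla^\ell u|\le\varepsilon_{\rm pt}\tilde r^{2\alpha+1-\ell}e^{-\alpha(\tau+\tau_0)}$ with $\varepsilon_{\rm pt}$ small. Since $\tilde r^{2\alpha}e^{-\alpha(\tau+\tau_0)}\le C\Gamma_0^\alpha$ on the support, this closes the pointwise faces.
\end{itemize}
The cost is that $Q_2(u)$ is only $O(\varepsilon_{\rm pt}^2e^{-2\alpha(\tau+\tau_0)})$, so the condition actually used is $\beta<2\alpha$. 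This is where the strict inequality $\beta<2\lambda_m$ enters, not through an $O(e^{-2\lambda_m(\tau+\tau_0)})$ bound as you state. With this change, the rest of your argument goes through.
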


\begin{proof}
	Fix \(\psi\in E_0\) and set
	\(H_\psi(\tau)
	:=
	e^{-\lambda_m(\tau+\tau_0)}\psi.\)
	Since \(L\psi=-\lambda_m\psi\), we have
	\begin{equation}
	\label{e:Hpsi-linear}
	(\partial_\tau-L)H_\psi=0.
	\end{equation}
	We will construct \(u\) so that
	\(
	z:=u-H_\psi
	\)
	decays like \(e^{-\beta(\tau+\tau_0)}\).
	
	\medskip
	
	\noindent\emph{Step 1: Correcting the cutoff on the finite-dimensional space.}
	
	Choose
	\(\vartheta\in C^\infty([0,\infty),[0,1])\) with
	\(\vartheta=1\) on \([0,1]\) and \(\vartheta=0\) on
	\([2,\infty)\), and, for \(\sigma\geq\tau_0\), set
	\[
	\chi_\sigma(x)
	=
	\vartheta\left(\frac{\tilde f(x)}{\gamma_0e^\sigma}\right).
	\]
	Take \(0<\gamma_0<\Gamma_0/8\), so that
	\(\operatorname{supp}\chi_{\tau_0}\) lies in the part of the cap
	which agrees with \(\Sigma\).
	Define an operator \(T_{\tau_0}:V\to V\) by
	\[
	T_{\tau_0}v
	=
	\Pi_\leq(\chi_{\tau_0}v).
	\]
	Since \(V\) is finite-dimensional, its elements have polynomial growth.
	Therefore, the Gaussian tail estimate gives
	\[
	\|(1-\chi_{\tau_0})v\|_{L^2_W}
	\leq
	Ce^{-c\gamma_0e^{\tau_0}}\|v\|_{L^2_W}
	\]
	for all $v\in V.$
	Hence, we can estimate the operator norm
	\[
	\|T_{\tau_0}-I\|
	\leq
	Ce^{-c\gamma_0e^{\tau_0}},
	\]
	so for \(\tau_0\) sufficiently large, \(T_{\tau_0}\) is invertible with \(\|T_{\tau_0}^{-1}\|\leq2.\)
	
	Fix \(\mu>0\).  
	For
	\[
	a\in \overline B_\mu(V)
	:=
	\{a\in V:\|a\|_{L^2_W}\leq\mu\},
	\]
	define
	\begin{align*}
	r_\psi
	&:=
	\Pi_\leq
	\left(
	(\chi_{\tau_0}-1)e^{-\lambda_m\tau_0}\psi
	\right)\,
	\text{ and }\,
	w_a
	:=
	T_{\tau_0}^{-1}
	\left(
	e^{-\beta\tau_0}a-r_\psi
	\right).
	\end{align*}
	We take the initial graph to be
	\begin{equation}
	\label{e:initial-graph-prescribe}
	u_a(0)
	=
	\chi_{\tau_0}
	\left(
	e^{-\lambda_m\tau_0}\psi+w_a
	\right).
	\end{equation}
	As in the compact gluing construction, this compactly supported graph is inserted into the fixed exterior closed hypersurface, giving a closed initial hypersurface.
	
	Let
	\[
	z_a(0)
	:=
	u_a(0)-e^{-\lambda_m\tau_0}\psi.
	\]
	By the definition of \(w_a\),
	\begin{equation}
	\label{e:initial-low-projection}
	\Pi_\leq z_a(0)
	=
	e^{-\beta\tau_0}a.
	\end{equation}
	Moreover, for each fixed \(\gamma_0>0\), uniformly in
	\(a\in\overline B_\mu(V)\),
	\(\|\Pi_>z_a(0)\|_{L^2_W}
	=
	o(e^{-\beta\tau_0})\)
	as $\tau_0\to\infty,$
	since the high-frequency part only comes from cutting off finite-dimensional eigenfunction data hence is exponentially small in the Gaussian tail.  
	For \(\alpha\) chosen in Step~2, the same cutoff and eigenfunction
	estimates give
	\[
	K_{\gamma_0,\tau_0}:=
	e^{\alpha\tau_0}\max_{0\leq j\leq4n+4}
	\sup_{a\in\overline B_\mu(V)}\sup_\Sigma
	\tilde r^{-2\alpha-1+j}|\nabla^ju_a(0)|.
	\]
	More precisely, the cutoff derivative estimates give
	\begin{align*}
	K_{\gamma_0,\tau_0}
	\leq{}
    &C_\psi\bigl(e^{-(\lambda_m-\alpha)\tau_0}
	+\gamma_0^{\lambda_m-\alpha}\bigr)
	+C\mu e^{-(\beta-\alpha)\tau_0}
	+C\mu\gamma_0^{\lambda_m-\alpha}
	e^{-(\beta-\lambda_m)\tau_0}
	+Ce^{-c\gamma_0e^{\tau_0}}.
	\end{align*}
	Consequently,
	\[
	\lim_{\gamma_0\downarrow0}\limsup_{\tau_0\to\infty}
	K_{\gamma_0,\tau_0}=0.
	\]
	In particular, the initial graphs satisfy
	the required smallness assumptions.
	
	\medskip
	
	\noindent\emph{Step 2: The refined box.}
	
	Let \(u_a(\tau)\) be the solution starting from
	\eqref{e:initial-graph-prescribe} and set
	\[
	z_a(\tau):=u_a(\tau)-H_\psi(\tau).
	\]
	Define
	\[
	A(\tau)
	:=
	e^{\beta(\tau+\tau_0)}
	\|\Pi_\leq z_a(\tau)\|_{L^2_W}
	\,\text{ and }\,
	B(\tau)
	:=
	e^{\beta(\tau+\tau_0)}
	\|\Pi_>z_a(\tau)\|_{L^2_W}.
	\]
	The refined box is defined by the standard pointwise small graph bounds for \(u_a\) and the additional bounds
	\begin{equation}
	\label{e:refined-box}
	A(\tau)\leq\mu
	\,\text{ and }\,
	B(\tau)\leq\mu.
	\end{equation}
	
	Choose \(\alpha\) outside the spectrum of \(L\) such that
	\begin{equation}
	\label{e:alpha-choice}
	\frac{\beta}{2}<\alpha<\lambda_m.
	\end{equation}
	This is possible because of \eqref{e:beta-choice-prescribe}.  
	Inside the box,
	\[
	\|u_a(\tau)\|_{L^2_W}
	\leq
	\|\psi\|_{L^2_W}e^{-\lambda_m(\tau+\tau_0)}
	+
	2\mu e^{-\beta(\tau+\tau_0)}.
	\]
	Thus
	\[
	\|u_a(\tau)\|_{L^2_W}
	\leq
	c(\tau_0)e^{-\alpha(\tau+\tau_0)},
	\qquad
	c(\tau_0):=\|\psi\|_{L^2_W}e^{-(\lambda_m-\alpha)\tau_0}
	+2\mu e^{-(\beta-\alpha)\tau_0}\to0
	\]
	as $\tau_0\to\infty.$
	The proof of Theorem~\ref{impro} uses the special eigenfunction form
	of its initial data only to obtain the scale-invariant initial bounds.
	Thus the same proof, using
	\eqref{e:E-pointwise-prescribe}--\eqref{e:prescribe-Y3} and the fixed
	cutoff buffer, applies
	with \(c(\tau_0)\) and \(K_{\gamma_0,\tau_0}\).  Consequently, for
	any \(\varepsilon_{\rm pt}>0\), after choosing the parameters as
	above, it gives, uniformly while the solution remains in the box,
	\begin{align}
	|\nabla^\ell u_a|
	&\leq
	\varepsilon_{\rm pt}\tilde r^{2\alpha+1-\ell}
	e^{-\alpha(\tau+\tau_0)}
	\text{ for }
	\ell=0,1,
	\label{e:C2-improvement-1}\\
	|\nabla^2u_a|
	&\leq
	\varepsilon_{\rm pt}\tilde r^{2\alpha}
	e^{-\alpha(\tau+\tau_0)}.
	\label{e:C2-improvement-2}
	\end{align}
	Since \(\operatorname{supp}u_a\subset
	\{f\leq\Gamma_0e^{\tau+\tau_0}\}\), we have
	\(\tilde r^{2\alpha}e^{-\alpha(\tau+\tau_0)}
	\leq C\Gamma_0^\alpha\) there.  Thus choosing
	\(\varepsilon_{\rm pt}\) sufficiently small makes the pointwise box
	inequalities strict.  Hence a first exit cannot occur through a
	pointwise face.
	
	\medskip
	
	\noindent\emph{Step 3: The equation for the remainder.}
	
	Using \eqref{e:forced-graph-prescribe} and \eqref{e:Hpsi-linear},
	and denoting the corresponding transplantation error by
	\(\mathcal E_a\), the remainder \(z_a=u_a-H_\psi\) satisfies
	\begin{equation}
	\label{e:z-equation}
	\partial_\tau z_a
	= Lz_a
	+ P_\tau z_a
	+ R_a,
	\end{equation}
	where
	\begin{equation}
	\begin{split}
	\label{e:Ra-def}
	R_a
	:={}&
	\mathcal E_a
	+Y_0
	+Y_1(u_a,\tilde r\nabla u_a,x,\tau)u_a
	+Y_2^b(u_a,\tilde r\nabla u_a,x,\tau)\nabla_bu_a
	\\
	&+
	\big(
	Y_3^{bc}(u_a,\tilde r\nabla u_a,x,\tau)
	-
	Y^{bc}_{3,0}(x,\tau)
	\big)
	\nabla^2_{bc}u_a
	+
	P_\tau H_\psi
	+
	Q_2(u_a).
	\end{split}
	\end{equation}
	We claim that for any prescribed $\varepsilon_R>0,$ after taking
	$\tau_0$ sufficiently large, we have
	\begin{equation}
	\label{e:R-small}
	e^{\beta(\tau+\tau_0)}
	\|R_a(\tau)\|_{L^2_W}
	\leq
	\varepsilon_R
	\end{equation}
	uniformly while the solution remains in the box.
	We will estimate each of the terms in $R_a.$
	
	Uniformly while the solution remains in the box,
	\eqref{e:E-Gaussian-prescribe} gives, for every \(N\),
	\[
	\|\mathcal E_a(\tau)\|_{L^2_W}
	\leq
	C_Ne^{-N(\tau+\tau_0)}.
	\]
	The quadratic graph error satisfies
	\[
	\|Q_2(u_a)\|_{L^2_W}
	\leq
	C\varepsilon_{\rm pt}^2e^{-2\alpha(\tau+\tau_0)}
	\]
	by \eqref{e:C2-improvement-1}, \eqref{e:C2-improvement-2}, and the Gaussian integrability of polynomial weights.
	The lower order forcing terms satisfy
	\begin{align*}
	\|Y_0\|_{L^2_W}
	&\leq
	C\delta_0e^{-\frac12(\tau+\tau_0)},\quad
	\|Y_1u_a\|_{L^2_W}
	\leq
	C\delta_0e^{-(1+\alpha)(\tau+\tau_0)},\quad
	\|Y_2^b\nabla_bu_a\|_{L^2_W}
	&\leq
	C\delta_0e^{-(\frac12+\alpha)(\tau+\tau_0)}.
	\end{align*}
	By the mean value theorem in the fibre variables, the intervening
	segment being admissible by the strict pointwise improvement, the
	fibre derivative bounds for \(Y_3\),
	\eqref{e:C2-improvement-1}, and \eqref{e:C2-improvement-2}, it follows that
	\[
	\begin{aligned}
	&\left\|
	\big(
	Y_3(u_a,\tilde r\nabla u_a,x,\tau)
	-
	Y_{3,0}(x,\tau)
	\big)\nabla^2u_a
	\right\|_{L^2_W}  
	\leq
	C\delta_0\varepsilon_{\rm pt}^2e^{-2\alpha(\tau+\tau_0)}.
	\end{aligned}
	\]
	Finally, \eqref{e:Ptau-E0-automatic} implies
	\[
	\begin{aligned}
	\|P_\tau H_\psi\|_{L^2_W}
	&=
	e^{-\lambda_m(\tau+\tau_0)}
	\|P_\tau\psi\|_{L^2_W} 
	\leq
	C\eta_0
	e^{-(\lambda_m+1)(\tau+\tau_0)}
	\|\psi\|_{L^2_W}.
	\end{aligned}
	\]
	Because
	\[
	\beta<\frac12,
	\qquad
	\beta<2\alpha,
	\qquad
	\beta<\lambda_m+1,
	\]
	all terms are \(o(e^{-\beta(\tau+\tau_0)})\), uniformly on that
	interval after taking \(\tau_0\) large.
	This proves \eqref{e:R-small}.
	
	\medskip
	
	\noindent\emph{Step 4: Spectral inequalities.}
	
	The compact support of \(u_a\), the polynomial growth of
	\(H_\psi\), and parabolic regularity give
	\(z_a\in C([0,T];H^2_W)\cap C^1([0,T];L^2_W)\) on every compact
	time interval on which the solution remains in the box.
	We use the estimate
	\(
	|\nabla^kY_{3,0}|
	\leq
	C\delta_0\tilde f^{-k/2},\qquad 0\leq k\leq2,
	\)
	which follows from \eqref{e:prescribe-Y3}.
	For \(v\in E_\leq\),
	\begin{equation}
	\label{e:P-low}
	\|P_\tau v\|_{L^2_W}
	\leq
	C\delta_0\|v\|_{L^2_W}
	\end{equation}
	by finite-dimensionality and \eqref{e:finite-spectral-estimates}.
	If \(\{\varphi_q\}\) is an orthonormal basis of \(E_\leq\), then
	\[
	P_\tau^{*,W}\varphi_q
	=e^f\nabla_j\nabla_i(e^{-f}Y_{3,0}^{ij}\varphi_q).
	\]
	Using \(|\nabla f|\leq C\tilde f^{1/2}\),
	\(|\nabla^2f|\leq C\), \eqref{e:prescribe-Y3}, and
	\eqref{e:finite-spectral-estimates}, we obtain
	\(\|P_\tau^{*,W}\varphi_q\|_{L^2_W}\leq C\delta_0\).
	Consequently, for \(w\in E_>\cap H^2_W\),
	\begin{equation}
	\label{e:P-cross}
	\|\Pi_\leq P_\tau w\|_{L^2_W}
	\leq
	C\delta_0\|w\|_{L^2_W}.
	\end{equation}
	
	We write $d\mu_W:=e^{-f}d\mu_\Sigma.$
	For \(w\in E_>\cap H^2_W\), integration by parts and the weighted
	inequality below give
	\begin{equation}
	\label{e:P-high-form}
	|\langle P_\tau w,w\rangle_{L^2_W}|
	\leq
	C\delta_0
	\int_\Sigma
	(|\nabla w|^2+w^2)\,d\mu_W.
	\end{equation}
	The terms involving \(\nabla f\) are controlled by the weighted inequality 
	\[
	\int_\Sigma fw^2\,d\mu_W
	\leq
	C\int_\Sigma(|\nabla w|^2+w^2)\,d\mu_W.
	\]
	This follows from \eqref{shrinker-F-Laplacian} and Cauchy's inequality
	(cf. \cite{BW17, CS}).
	Since
	\[
	\langle Lw,w\rangle_{L^2_W}
	=
	-\int_\Sigma|\nabla w|^2\,d\mu_W
	+
	\int_\Sigma
	\left(|A|^2+\frac12\right)w^2\,d\mu_W
	\]
	and \(|A|\) is bounded, \eqref{e:P-high-form} implies
	\[
	|\langle P_\tau w,w\rangle_{L^2_W}|
	\leq
	C\delta_0
	\left(
	-\langle Lw,w\rangle_{L^2_W}
	+
	\|w\|_{L^2_W}^2
	\right).
	\]
	The same calculation gives
	\[
	|\langle P_\tau v,w\rangle_{L^2_W}|
	\leq C\delta_0\|v\|_{H^1_W}\|w\|_{H^1_W},
	\]
	so the form extends to \(H^1_W\times H^1_W\) and the estimate holds
	on the form domain by density.  Using
	\(\langle Lw,w\rangle_{L^2_W}
	\leq
	-\lambda_+\|w\|_{L^2_W}^2\)
	for $w\in E_>,$
	we obtain
	\begin{equation}
	\label{e:high-dissipative}
	\langle (L+P_\tau)w,w\rangle_{L^2_W}
	\leq
	-(\lambda_+-C\delta_0)\|w\|_{L^2_W}^2.
	\end{equation}
	
	Write
	\[
	z_{\leq}:=\Pi_\leq z_a
	\,\text{ and }\,
	z_>:=\Pi_>z_a.
	\]
	Let \(\underline D^+\) and \(\overline D^+\) denote the lower and
	upper right Dini derivatives.  From \eqref{e:z-equation},
	\eqref{e:R-small}, \eqref{e:P-low}, \eqref{e:P-cross}, and
	\eqref{e:high-dissipative}, norm differentiation gives
	\begin{align}
	\label{e:A-Dini}
	\underline D^+A
	&\geq
	\big(\beta-\lambda_m-C\delta_0\big)A
	-
	C\delta_0B
	-
	e^{\beta(\tau+\tau_0)}
	\|R_a\|_{L^2_W}
	,\\
	\label{e:B-Dini}
	\overline D^+B
	&\leq
	-\big(\lambda_+-\beta-C\delta_0\big)B
	+
	C\delta_0A
	+
	e^{\beta(\tau+\tau_0)}
	\|R_a\|_{L^2_W}.
	\end{align}
	Here, we use that all the eigenvalues on \(E_\leq\) are at most \(\lambda_m\), while all the eigenvalues on \(E_>\) are at least
	\(\lambda_+\).
	
	Set \(g_*:=\min\{\beta-\lambda_m,\lambda_+-\beta\}>0\).  Choose
	\(\delta_0\) and then \(\tau_0\) so that
	\[
	2C\delta_0\leq\frac14g_*,
	\qquad
	\varepsilon_R\leq\frac14g_*\mu.
	\]
	It follows that \(\underline D^+A>0\) on
	\(\{A=\mu,B\leq\mu\}\), while \(\overline D^+B<0\) on
	\(\{B=\mu,A\leq\mu\}\).
	Thus, the \(B\)-face and the pointwise faces are inward pointing,  
	so the only possible first exit from the refined box is through the \(A\)-face, and the crossing there is strictly outward.
	
	\medskip
	
	\noindent\emph{Step 5: The degree argument.}
	
	As long as \(u_a\) remains in the box, parabolic estimates in the
	graphical region and \eqref{e:exterior-regularity} on its complement
	give the bounds required by the continuation criterion.  Thus a
	finite maximal time can occur only through a box exit.  Moreover,
	by \eqref{e:initial-low-projection} and the Gaussian high-mode tail,
	after increasing \(\tau_0\),
	\[
	A(0)=\|a\|_{L^2_W},
	\qquad B(0)<\frac\mu2,
	\]
	and all pointwise faces are strict at time zero.

	For \(a\in\overline B_\mu(V)\), let \(\tau(a)\) be the supremum of
	times on which the solution exists and remains in the box.  Suppose,
	for contradiction, that \(\tau(a)<\infty\) for every \(a\).  By the
	previous step, every first exit is a strictly outward crossing of
	the \(A\)-face.
	Define
	\[
	\Phi(a)
	:=
	e^{\beta(\tau(a)+\tau_0)}
	\Pi_\leq z_a(\tau(a)).
	\]
	Then
	\(\Phi(a)\in\partial B_\mu(V).\)
	If \(a_j\to a\), continuous dependence rules out exit of \(u_{a_j}\)
	before \(\tau(a)-o(1)\), while uniform strict transversality forces
	exit before \(\tau(a)+o(1)\).
	Hence \(\tau(a)\), and therefore \(\Phi\), is continuous.
	
	If \(a\in\partial B_\mu(V)\), then by \eqref{e:initial-low-projection},
	\(e^{\beta\tau_0}\Pi_\leq z_a(0)=a\).  All other faces are strict,
	and \(\underline D^+A(0)>0\); hence \(\tau(a)=0\) and
	\(\Phi(a)=a.\)
	Thus, \(\Phi\) is a continuous retraction from the closed ball
	\(\overline B_\mu(V)\) onto its boundary \(\partial B_\mu(V)\), which is impossible.  
	Hence, there exists \(a_*\in\overline B_\mu(V)\) such that the corresponding solution never exits the box.
	
	For this parameter \(a_*\), we have
	\[
	\|\Pi_\leq z_{a_*}(\tau)\|_{L^2_W}
	+
	\|\Pi_>z_{a_*}(\tau)\|_{L^2_W}
	\leq
	2\mu e^{-\beta(\tau+\tau_0)}.
	\]
	Therefore,
	\(
	\|z_{a_*}(\tau)\|_{L^2_W}
	\leq
	C e^{-\beta(\tau+\tau_0)}.
	\)
	Since
	\(
	u_{a_*}(\tau)
	=
	H_\psi(\tau)+z_{a_*}(\tau),
	\)
	we get
	\[
	\begin{aligned}
	\left\|
	e^{\lambda_m(\tau+\tau_0)}u_{a_*}(\tau)
	-
	\psi
	\right\|_{L^2_W}
	&=
	e^{\lambda_m(\tau+\tau_0)}
	\|z_{a_*}(\tau)\|_{L^2_W} 
	\leq
	C e^{-(\beta-\lambda_m)(\tau+\tau_0)}.
	\end{aligned}
	\]
	Since \(\beta>\lambda_m\), the right hand side tends to zero as \(\tau\to\infty\).  
	This proves the theorem.
\end{proof}

Finally, parabolically rescale the constructed flow and force by
\[
\widehat M_{\widehat t}
:=e^{-\tau_0/2}M_{e^{\tau_0}\widehat t},
\qquad
\widehat\phi[y,\widehat t]:=e^{\tau_0/2}\phi[e^{\tau_0/2}y,e^{\tau_0}\widehat t].
\]
Here \(-e^{-\tau_0}\leq\widehat t<0\).
This replaces \(\tau+\tau_0\) by \(\widehat\tau\) in the
scale-invariant bounds of Assumption~\ref{t:rmcf}.  Since
\(\widehat\tau:=-\log(-\widehat t)=\tau+\tau_0\),
\(\widehat u(\widehat\tau)=u_{a_*}(\widehat\tau-\tau_0)\), and hence
\[
\widehat u(\widehat\tau)
=e^{-\lambda_m\widehat\tau}\psi
+O_{L^2_W}(e^{-\beta\widehat\tau}).
\]
Since the transplanted and actual flows agree on every fixed compact
set for all sufficiently large \(\widehat\tau\), the estimates
\eqref{e:C2-improvement-1}--
\eqref{e:C2-improvement-2} and parabolic interior estimates imply
local smooth convergence to \(\Sigma\).  Moreover, the cutoff from
Step~1 and the Gaussian tail estimate give
\[
\|(1-\chi_{\widehat\tau})\psi\|_{L^2_W}
\leq Ce^{-c\gamma_0e^{\widehat\tau}}\|\psi\|_{L^2_W},
\]
and hence
\[
\|\chi_{\widehat\tau}\widehat u(\widehat\tau)
-e^{-\lambda_m\widehat\tau}\psi\|_{L^2_W}
=o(e^{-\lambda_m\widehat\tau}).
\]
Taking \(\lambda_m=\lambda\), \(\psi=\varphi\), and renaming
\(\widehat\tau\) as \(\tau\) proves
Theorem~\ref{thm:first-order} in the asymptotically conical case.
\section{The case of closed singularities}
\label{sec:closed-case}

Finally, in this section, we deal with closed shrinkers, for which the theorems can be derived by conceptually simpler arguments;
see \cite{Strehlke20, SWX2} for fixed point type arguments in the case of spherical singularities.
We indicate how the arguments in Sections \ref{sec: bar func}--\ref{s:prescribe-first-order} imply the main theorems for a closed shrinker.

For Theorem \ref{thm:forced-realization}, we look at the estimates and arguments in Sections \ref{sec: bar func}--\ref{s:box-forced}.
The calculations in Section \ref{sec: bar func} and Appendix \ref{s:barrier-calc} remain valid after replacing all homogeneous $C^{k}_{{\rm hom},1}$-norms with the standard $C^k$-norms.
In fact, the avoidance principle implies that a closed mean curvature flow stays in a fixed large Euclidean ball, and the homogeneous norms and the usual $C^k$-norms are equivalent.
The barrier functions in Section \ref{sec: bar func} and the rest of the arguments in Section \ref{sec:int-est} then lead to the global $C^2$ improvement theorem for a closed shrinker with all cutoff functions $\eta$ removed.
The box argument (with $\eta$ removed) in Section \ref{s:box-forced} then implies Theorems \ref{thm:riemannian-realization} and \ref{thm:rotational-realization} for such a closed shrinker.

For Theorem \ref{thm:first-order}, we look at Section \ref{s:prescribe-first-order}.
As above, all the cutoff functions are not needed, and we could work with $z_a$ in Step 2 directly without the cutoff estimates in Steps 1 and 3.
The rest of the arguments remain valid and imply Theorem \ref{thm:first-order} for a closed shrinker.

\appendix

\section{Barrier for \texorpdfstring{$\na h$}{nh}} 
\label{s:barrier-calc}

In this section, we define the comparison operator $\tilde{Q}$ and the barrier function $\tilde{u}$ for $\nabla h$, and complete the calculations outlined in Section \ref{sec: bar func}.

Differentiating \eqref{h-equ} and pairing with $\na^g h$ gives
\begin{align*}
&    \partial_\tau|\na^gh|^2\\ \leq&\pr{\Delta^g-\na^g_{\na^g \frac{|F|^2}{4}}} |\na^gh|^2 - 2|\na^{2,g} h|^2
    + 2|A_g|^2|\na^g h|^2+2A^2(\na^g h,\na^g h)
    + C^{ab}\na_a^g\na_b^g|\na^gh|^2\\  
    &+2h\la\na |A|^2,\na h\ra-2C^{ab}g
    ^{cd}\na_{ac}^{2,g}h\na^{2,g}_{bd}h +2|\na^g h\cdot(\na^g \mathcal{Q}_1+\tilde{C}_5h)|
    +|\na^g h|\cdot |\tilde{C}_3*\na^g h+\tilde{C}_4*\na^{2,g}h|.
\end{align*}
The same estimates used for $h$ give, under the smallness assumption
$\|h\|_{C^1_{\rm hom,1}}+\|h\|_{C^2}\leq\ep\leq \ep_n$,
\begin{align*}
    |C^{ab}|\leq C_{n,\Sigma}\pr{|hA|+|\na^g h|^2}\leq& C_{n,\Sigma}\ep,\\
    |\na^gh|\cdot |\tilde{C}_3*\na^g h+\tilde{C}_4*\na^{2,g} h|\leq& {C_{n,\Sigma}}(\ep|\na^{2,g}h|^2+(|A|^2+\ep|\na A|+\frac{\ep}{f})|\na^g h|^2),\\
 |\na^gh|\cdot  |\tilde{C}_5h|\leq&(|A_g|\cdot |\na^{2,g}A_g|+|\na^gA_g|^2)h^2|\na^gh|\leq \frac{C_{n,\Sigma}|\na^gh|\cdot h^2}{f^2}.
\end{align*}
On $\set{(x,\tau)\in \Sigma\times[0,\infty):\Gamma<f(x)<\Gamma e^{\tau+\tau_0}}$, the forcing terms satisfy
\begin{align*}
   & |\na^gh\cdot\na^g\mathcal{Q}_1-(\na^g h)_kY^{ij}_3(\na^3h)_{ijk}|\leq 2\delta_0|\na h|f^{-\frac{1}{2}}(e^{-\frac{1}{2}{(\tau+\tau_0)}}+e^{-\frac{1}{2}{(\tau+\tau_0)}}|\na h|+e^{-{(\tau+\tau_0)}}|h|+|\na^2h|),    \\
        &|(\na^g h)_kY^{ij}_3(\na^3h)_{ijk}-\frac{1}{2}Y^{ij}_3\na^g_i\na^g_j(|\na^gh|^2)|\leq \delta_0(|\na^2h|^2+C_{n}|A|^2|\na^g h|^2).
\end{align*}

The useful point is that $\na^g h$ is governed by the linearized operator
$\partial_\tau-(\Delta_{\frac{|F|^2}{4}}+|A|^2)$ rather than
$\partial_\tau-(\Delta_{\frac{|F|^2}{4}}+|A|^2+\frac{1}{2})$.  This removes one power of $f$ from the gradient barrier.  Assuming
$$h^2\leq u=
        e^{-2\lambda_*(\tau+\tau_0)} \pr{D\tf^{2\lambda_*+1}-B\tf^{2\lambda_*}}
        \quad\text{on }
        \set{(x,\tau)\in \Sigma\times[0,\infty):\Gamma <f(x)<\Gamma e^{\tau+\tau_0}},$$
we define
\begin{align*}
    \tilde{Q}\psi:=&\pr{g^{ab}+C^{ab}+Y_3^{ab}}\na_a^g\na_b^g\psi
    -\na^g_{\na^g \frac{|F|^2}{4}}\psi
    + \pr{C_n|A|^2+C_n|\na A|+\frac{C_n}{\tf}+\delta_0e^{-\frac{1}{2}{(\tau+\tau_0)}}\tf^{-\frac{1}{2}}}\psi
    \\ &+\sqrt{|\psi|}\cdot(2\delta_0e^{-{\frac{1}{2}(\tau+\tau_0)}}\tf^{-\frac{1}{2}}+2\delta_0e^{-{(\tau+\tau_0)}}\tf^{-\frac{1}{2}}\sqrt{u}+C_{n,\Sigma}\tf^{-\frac{3}{2}}\sqrt{u}+C_{n,\Sigma}\tf^{-2}u).\notag
\end{align*}
A parallel barrier calculation gives
\begin{align*}
    \tilde{u}=
        e^{-2\lambda_*(\tau+\tau_0)} \pr{\tilde{D}\tf^{2\lambda_*}-\tilde{B}\tf^{2\lambda_*-1}}
        &\text{ on }
        \set{(x,\tau)\in \Sigma\times[0,\infty):\Gamma <f(x)<\Gamma e^{\tau+\tau_0}},
\end{align*}

For the square-root contribution in $\tilde Q$, we have
\begin{align*}
    & e^{2\lambda_*(\tau+\tau_0)}\sqrt{ e^{-2\lambda_*(\tau+\tau_0)}(\tilde{D}\tf^{2\lambda_*}-\tilde{B}\tf^{2\lambda_*-1})}\pr{2\delta_0e^{-{\frac{1}{2}(\tau+\tau_0)}}\tf^{-\frac{1}{2}}+2\delta_0e^{-{(\tau+\tau_0)}}\tf^{-\frac{1}{2}}\sqrt{u}+C_{n,\Sigma}\tf^{-\frac{3}{2}}\sqrt{u}+C_{n,\Sigma}\tf^{-2}u}\\ 
    \leq &e^{\lambda_*(\tau+\tau_0)}\sqrt{\tilde{D}}\tf^{\lambda_*}\pr{2\delta_0e^{-\frac{1}{2}{(\tau+\tau_0)}}\tf^{-\frac{1}{2}}+2\delta_0e^{-(\lambda_*+1){(\tau+\tau_0)}}\sqrt{{D}}\tf^{\lambda_*}+C_{n,\Sigma}\tf^{-\frac{3}{2}}\sqrt{u}+C_{n,\Sigma}De^{-2\lambda_*(\tau+\tau_0)} \tf^{2\lambda_*-1}}\notag\\ 
    \leq&\sqrt{\tilde{D}}\tf^{2\lambda_*-1}(2\delta_0\Gamma^{\frac{1}{2}-\lambda_*}+2\delta_0\sqrt{D}\Gamma+\sqrt{D}+C_{n,\Sigma}D\Gamma^{\lambda_*}).\notag
\end{align*}
Fix $\Gamma>0$ to be finite, and enlarge $C_{n,\Sigma}$ once and for all
so that it dominates all constants in the preceding drift, potential, and
second-order perturbation estimates.  Prescribe
\[
    \frac{\widetilde B}{\widetilde D}
    =
    C_{n,\Sigma}\left(
        2\lambda_*\left(\frac n2+1\right)
        +\sup_\Sigma\bigl(
            \widetilde f|A|^2
            +\widetilde f|\nabla A|
        \bigr)
        +1
    \right).
\]
Choose $\Gamma$ sufficiently large that
\[
    1+\Gamma
    \geq
    C_{n,\Sigma}
    \max\left\{
        1,\frac{B}{D},
        \frac{\widetilde B}{\widetilde D}
    \right\},
\]
and then choose $\ep$ and $\delta_0$, depending on the fixed $\Gamma$,
sufficiently small that
\[
    C_{n,\Sigma}
    \left(
        \ep+\delta_0
        +\delta_0\Gamma^{1/2}
        +\delta_0\Gamma
    \right)
    \leq\frac14.
\]
For every fixed $T>0$, choose
\begin{equation}\label{e:gradient-BD-choice}
\begin{split}
    \widetilde D=\max\Bigg\{&
        C_{n,\Sigma}^2
        \left(
            \sqrt D
            +2\delta_0\Gamma^{\frac12-\lambda_*}
            +2\delta_0\sqrt D\,\Gamma
            +D\Gamma^{\lambda_*}
        \right)^2,\\
        &2\sup_{\partial_p\mathcal A_{T,\Gamma,\tau_0}}
        e^{2\lambda_*(\tau+\tau_0)}
        \widetilde f(x)^{-2\lambda_*}
        |\nabla h(x,\tau)|^2
    \Bigg\},
\end{split}
\end{equation}
and set
\[
    \widetilde B
    =
    C_{n,\Sigma}\widetilde D
    \left(
        2\lambda_*\left(\frac n2+1\right)
        +\sup_\Sigma\bigl(
            \widetilde f|A|^2
            +\widetilde f|\nabla A|
        \bigr)
        +1
    \right).
\]
These choices imply
\[
    \widetilde u
    =
    e^{-2\lambda_*(\tau+\tau_0)}
    \widetilde f^{2\lambda_*-1}
    \bigl(\widetilde D\widetilde f-\widetilde B\bigr)
    \geq
    \frac{\widetilde D}{2}
    e^{-2\lambda_*(\tau+\tau_0)}
    \widetilde f^{2\lambda_*}>0.
\]
Combining the exact radial calculation with the preceding second-order,
potential, and square-root estimates therefore gives
\[
    (\partial_\tau-\widetilde Q)\widetilde u
    \geq
    \frac{\widetilde B}{8}
    e^{-2\lambda_*(\tau+\tau_0)}
    \widetilde f^{2\lambda_*-1}>0
\quad\text{on}\quad
    \left\{
        (x,\tau)\in\Sigma\times(0,T):
        \Gamma<f(x)<\Gamma e^{\tau+\tau_0}
    \right\}.
\]
The boundary term in \eqref{e:gradient-BD-choice} also guarantees
$|\nabla h|^2\leq\widetilde u$ on the fixed inner face and the moving
outer face.

\section{Computations for the geometric forcing terms}
\label{app:geometric-forcing}

In this appendix, we provide detailed proofs for Propositions \ref{p:riem-admissible} and \ref{p:rot-admissible}.
Recall that we let \(f=|F|^2/4\) and $\td f=f+1,$
and let $\tilde r\geq1$ be smooth with
$\tilde r(x)=|F(x)|+1$ for $|F(x)|\geq1$.

\subsection{The Riemannian manifold forcing}
\label{app:riemannian-forcing}

We prove Proposition~\ref{p:riem-admissible}.  
Choose normal coordinates
\(\xi=(\xi^1,\ldots,\xi^{n+1})\) centered at \(p\), so that
\(g_{\alpha\beta}(0)=\delta_{\alpha\beta}\) and
\(\partial_\gamma g_{\alpha\beta}(0)=0\).  
For \(0<\rho\leq1\), let
\(\iota_\rho(\xi)=\rho\xi\) and set
\[
        g^{(\rho)}:=\rho^{-2}\iota_\rho^*g,
        \qquad
        (g^{(\rho)})_{\alpha\beta}(\xi)
        =g_{\alpha\beta}(\rho\xi).
\]
The rescaled ambient metric is
\[
        \bar g_\tau
        =e^{\tau+\tau_0}
        \iota_{e^{-\frac12(\tau+\tau_0)}}^*g^{(\rho)},
\]
and hence
\[
        (\bar g_\tau)_{\alpha\beta}(\xi)
        =g_{\alpha\beta}
        \bigl(\rho e^{-\frac12(\tau+\tau_0)}\xi\bigr).
\]
After reducing \(\rho\), there is a constant
\(\eta_{\rm amb}=\eta_{\rm amb}(\rho,g,p)>0\), with
\(\eta_{\rm amb}\to0\) as \(\rho\to0\), such that, for
\(2l+|\alpha|\leq4n+8\),
\begin{align}
        |\partial_\tau^l\partial^\alpha(\bar g_\tau-\delta)|(\xi)
        &\leq
        C_{l,\alpha}\eta_{\rm amb}e^{-(\tau+\tau_0)}
        (1+|\xi|)^{2-|\alpha|},
        \label{e:app-riem-metric-est}\\
        |\partial_\tau^l\partial^\alpha\Gamma_{\bar g_\tau}|(\xi)
        &\leq
        C_{l,\alpha}\eta_{\rm amb}e^{-(\tau+\tau_0)}
        (1+|\xi|)^{1-|\alpha|}
        \label{e:app-riem-Christoffel-est}
\end{align}
for \(|\xi|\leq C\Gamma_0e^{\frac12(\tau+\tau_0)}\), provided
\(\rho\Gamma_0\) is sufficiently small.  These estimates follow by
repeatedly differentiating the normal-coordinate expansion of \(g\).

For a hypersurface \(F\colon X\to\bb R^{n+1}\), the scalar forcing term is
\[
        \phi_r[X,\tau]
        =
        \left\langle
        \mathbf H_{\bar g_\tau}(X)-\mathbf H_{\mathrm{euc}}(X),
        \N_X
        \right\rangle_{\mathrm{euc}}.
\]
We apply this to the graph
\[
        X_h=F+h\N_F
\]
of a function $h.$
We temporarily regard the height and the weighted gradient as independent
fibre variables.  Let \(z\) and \(w\) correspond to \(h\) and
\(\tilde r\na h\), respectively, and set
\[
        q=\tilde r^{-1}w,
        \qquad |z|+|w|\leq\ep_0\tilde r.
\]
Set
\[
        X_z=F+z\N_F.
\]
In a local orthonormal frame \(\{e_i\}\) on \(F\), define
\[
        E_i(z,q)
        =dF(e_i)
        +z\, \overline\nabla^{\mathrm{euc}}_{e_i}\N_F
        +q_i\N_F.
\]
For the actual graph,
\[
        z=h,\qquad w=\tilde r\na h,
        \qquad q=\na h.
\]
Whenever \(\na_x\) is applied to a coefficient below, the fibre variables
\((z,w)\) and the time variable are held fixed.
We use throughout the symbol estimates
\[
        |\na^k\tilde r|\leq C_k\tilde r^{1-k},
        \qquad
        |\na^k\tilde r^{-1}|\leq C_k\tilde r^{-1-k}.
\]

Let \(\N_0\) be the Euclidean unit normal to the plane spanned by the
\(E_i\), and let \(P_0^\perp\) be the Euclidean normal projection. 
 Let \(\N_\tau\) be the \(\bar g_\tau\)-unit normal to the same plane, and let \(P_\tau^\perp\) be the \(\bar g_\tau\)-normal projection.  
Define
\(\widehat G_{ij}=\langle E_i,E_j\rangle\)
and \(G^\tau_{ij}=\bar g_\tau(E_i,E_j).\)
The shrinker estimates and the graph smallness assumption imply
\begin{align}
        |\nabla^kX_z|
        &\leq
        C_k\tilde f^{\frac12-\frac{k}{2}},
        \label{e:app-Xz-symbol}\\
        |\nabla^kE_i|
        &\leq
        C_k\tilde f^{-\frac{k}{2}},
        \label{e:app-Ei-symbol}\\
        |\nabla^k\widehat G^{ij}|
        +
        |\nabla^k\N_0|
        +
        |\nabla^kP_0^\perp|
        &\leq
        C_k\tilde f^{-\frac{k}{2}}.
        \label{e:app-euc-symbol}
\end{align}

We now estimate the principal coefficient perturbation
\[
        \mathcal C^{ij}(z,q,x,\tau)
        :=
        (G^\tau)^{ij}P_\tau^\perp
        -
        \widehat G^{ij}P_0^\perp.
\]
Write
\(G^\tau_{ij}
=
\widehat G_{ij}+B_{ij}\)
and \(B_{ij}
=
(\bar g_\tau-\delta)_{X_z}(E_i,E_j).\)
From \eqref{e:app-riem-metric-est}, \eqref{e:app-Xz-symbol}, and \eqref{e:app-Ei-symbol},
\begin{equation}
\label{e:app-Bij-est}
        |\nabla^kB_{ij}|
        \leq
        C_k\eta_{\rm amb} e^{-(\tau+\tau_0)}
        \tilde f^{1-\frac{k}{2}}.
\end{equation}
On the region defined by
\(\tilde f\leq C\Gamma_0^2e^{\tau+\tau_0},\)
we have \(|B_{ij}|\leq C\eta_{\rm amb}\Gamma_0^2.\)
Choosing the coordinate scale so that \(C\eta_{\rm amb}\Gamma_0^2\ll1\), the induced metrics \(G^\tau\) and \(\widehat G\) are uniformly equivalent.  
Since
\[
        (G^\tau)^{-1}-\widehat G^{-1}
        =
        -(G^\tau)^{-1}B\,\widehat G^{-1},
\]
we get
\begin{equation}
\label{e:app-inverse-difference}
        \left|
        \nabla^k
        \big((G^\tau)^{ij}-\widehat G^{ij}\big)
        \right|
        \leq
        C_k\eta_{\rm amb} e^{-(\tau+\tau_0)}
        \tilde f^{1-\frac{k}{2}}.
\end{equation}

The normal comparison is also standard.  
Since \(\N_0\) is Euclidean
orthogonal to \(E_i\),
\[
        \bar g_\tau(\N_0,E_i)
        =
        (\bar g_\tau-\delta)_{X_z}(\N_0,E_i).
\]
The \(\bar g_\tau\)-unit normal is
\[
        \N_\tau
        =
        \mu^{-1/2}
        \left(
        \N_0
        -
        (G^\tau)^{ij}
        \bar g_\tau(\N_0,E_j)E_i
        \right),
\]
where
\[
        \mu
        =
        \bar g_\tau(\N_0,\N_0)
        -
        (G^\tau)^{ij}
        \bar g_\tau(\N_0,E_i)\bar g_\tau(\N_0,E_j).
\]
Using \eqref{e:app-riem-metric-est} and the estimates above, one obtains
\[
        |\nabla^k(\mu-1)|
        \leq
        C_k\eta_{\rm amb} e^{-(\tau+\tau_0)}
        \tilde f^{1-\frac{k}{2}}.
\]
After reducing \(\eta_{\rm amb}\), we have \(\mu\geq1/2\).  
Therefore
\[
        |\nabla^k(\N_\tau-\N_0)|
        \leq
        C_k\eta_{\rm amb} e^{-(\tau+\tau_0)}
        \tilde f^{1-\frac{k}{2}},
\]
and hence
\[
        |\nabla^k(P_\tau^\perp-P_0^\perp)|
        \leq
        C_k\eta_{\rm amb} e^{-(\tau+\tau_0)}
        \tilde f^{1-\frac{k}{2}}.
\]
Combining this with \eqref{e:app-inverse-difference}, we get
\begin{equation}
\label{e:app-principal-est}
        |\nabla^k\mathcal C^{ij}|
        \leq
        C_k\eta_{\rm amb} e^{-(\tau+\tau_0)}
        \tilde f^{1-\frac{k}{2}}.
\end{equation}
The same argument gives the derivative estimates
\begin{align}
        |\nabla^k\partial_z\mathcal C^{ij}|
        &\leq
        C_k\eta_{\rm amb} e^{-(\tau+\tau_0)}
        \tilde f^{\frac12-\frac{k}{2}},
        \label{e:app-principal-z-est}\\
        |\nabla^k\partial_{q_m}\mathcal C^{ij}|
        &\leq
        C_k\eta_{\rm amb} e^{-(\tau+\tau_0)}
        \tilde f^{1-\frac{k}{2}}.
        \label{e:app-principal-q-est}
\end{align}
Indeed, differentiating in \(z\) differentiates either the ambient coefficient
in the normal direction or the graph tensors in the \(z\)-direction, and
therefore loses one factor \(\tilde f^{1/2}\).  Differentiating in \(q\) does not
force this loss.

We now deal with the mean curvature difference.  
With respect to the frame
\(\{e_i\}\),
\begin{align}
        \mathbf H_{\bar g_\tau}(X_h)
        -
        \mathbf H_{\mathrm{euc}}(X_h)
        &=
        \mathcal C^{ij}(h,\nabla h,x,\tau)
        \overline\nabla^{\mathrm{euc}}_{e_i}E_j
        +
        (G^\tau)^{ij}P_\tau^\perp
        \Gamma_{\bar g_\tau}(E_i,E_j).
        \label{e:app-Hdiff}
\end{align}
Moreover,
\begin{equation}
\label{e:app-second-graph}
        \overline\nabla^{\mathrm{euc}}_{e_i}E_j
        =
        (\nabla_i\nabla_jh)\N_F
        +
        \mathcal R_{ij}(h,\nabla h,x),
\end{equation}
where \(\mathcal R_{ij}\) does not depend on \(\nabla^2h\), and
\begin{align}
        |\nabla^k\mathcal R_{ij}|
        &\leq
        C_k\tilde f^{-\frac12-\frac{k}{2}},
        \label{e:app-R-est}\\
        |\nabla^k\partial_z\mathcal R_{ij}|
        &\leq
        C_k\tilde f^{-1-\frac{k}{2}},
        \label{e:app-R-z-est}\\
        |\nabla^k\partial_{q_m}\mathcal R_{ij}|
        &\leq
        C_k\tilde f^{-\frac12-\frac{k}{2}}.
        \label{e:app-R-q-est}
\end{align}

It follows that the $\mathcal{Q}_1$ term
\[
        \frac{\phi_r[F+h\N_F,\tau]}
        {\langle \N_F,\N_{F_r}\rangle}
\]
is affine-linear in \(\nabla^2h\).  Hence, there exist smooth coefficients
\[
        \mathcal A=\mathcal A(z,q,x,\tau),
        \qquad
        \mathcal B^{ij}=\mathcal B^{ij}(z,q,x,\tau),
\]
such that
\begin{equation}
\label{e:app-A-B}
        \frac{\phi_r[F+h\N_F,\tau]}
        {\langle \N_F,\N_{F_r}\rangle}
        =
        \mathcal A(h,\nabla h,x,\tau)
        +
        \mathcal B^{ij}(h,\nabla h,x,\tau)(\nabla^2h)_{ij}.
\end{equation}
The denominator is harmless because
\(\langle\N_F,\N_{F_r}\rangle\geq 1/2.\)

From \eqref{e:app-principal-est}, the coefficients of \(\nabla^2h\) satisfy
\begin{equation}
\label{e:app-B-est}
        |\nabla^k\mathcal B^{ij}|
        \leq
        C_k\eta_{\rm amb} e^{-(\tau+\tau_0)}
        \tilde f^{1-\frac{k}{2}}.
\end{equation}
The lower-order part \(\mathcal A\) has two contributions.  
The principal contribution is \(\mathcal C^{ij}\mathcal R_{ij},\)
which is bounded by
\[
        C_k\eta_{\rm amb} e^{-(\tau+\tau_0)}
        \tilde f^{\frac12-\frac{k}{2}}.
\]
The Christoffel contribution in \eqref{e:app-Hdiff} is controlled by \eqref{e:app-riem-Christoffel-est}.  
Since
\[
        |\Gamma_{\bar g_\tau}(X_z)|
        \leq
        C\eta_{\rm amb} e^{-(\tau+\tau_0)}\tilde f^{1/2},
\]
it satisfies the same bound.  
Thus,
\begin{equation}
\label{e:app-A0-est}
        |\nabla^k\mathcal A(0,0,x,\tau)|
        \leq
        C_k\eta_{\rm amb} e^{-(\tau+\tau_0)}
        \tilde f^{\frac12-\frac{k}{2}}.
\end{equation}
Using \eqref{e:app-principal-z-est},
\eqref{e:app-principal-q-est},
\eqref{e:app-R-z-est}, and \eqref{e:app-R-q-est}, we also get
\begin{align}
        |\nabla^k\partial_z\mathcal A(z,q,x,\tau)|
        &\leq
        C_k\eta_{\rm amb} e^{-(\tau+\tau_0)}
        \tilde f^{-\frac{k}{2}},
        \label{e:app-A-z-est}\\
        |\nabla^k\partial_{q_m}\mathcal A(z,q,x,\tau)|
        &\leq
        C_k\eta_{\rm amb} e^{-(\tau+\tau_0)}
        \tilde f^{\frac12-\frac{k}{2}}.
        \label{e:app-A-q-est}
\end{align}

Define
\begin{align*}
        Y_0(x,s)
        &:=
        \mathcal A(0,0,x,s),
        &\quad
        &Y_1(z,w,x,s)
        :=
        \int_0^1
        \partial_z\mathcal A
        (\theta z,\theta\tilde r^{-1}w,x,s)\,d\theta,
        \\
        Y_2^m(z,w,x,s)
        &:=
        \int_0^1
        \partial_{q_m}\mathcal A
        (\theta z,\theta\tilde r^{-1}w,x,s)\,d\theta,
        &\quad
        &Y_3^{ij}(z,w,x,s)
        :=
        \mathcal B^{ij}(z,\tilde r^{-1}w,x,s).
\end{align*}
By the fundamental theorem of calculus,
\[
\begin{aligned}
        \mathcal A(h,\nabla h,x,\tau)
        &=
        \mathcal A(0,0,x,\tau)  
        +
        \left(
        \int_0^1\partial_z\mathcal A(sh,s\nabla h,x,\tau)\,ds
        \right)h+
        \left(
        \int_0^1\partial_{q_m}\mathcal A(sh,s\nabla h,x,\tau)\,ds
        \right)(\nabla h)_m.
\end{aligned}
\]
Therefore, \eqref{e:app-A-B} implies \eqref{e:forced nonlinear}.
Repeated differentiation of the preceding identities, with the fibre
variables fixed under \(\na_x\), gives, after evaluation at
\((z,w,s)=(h,\tilde r\na h,\tau)\),
\begin{align*}
        |\partial_s^l\na_x^kY_0|
        &\leq
        C_{l,k}\eta_{\rm amb}e^{-(\tau+\tau_0)}
        \tilde f^{\frac12-\frac{k}{2}},
        &\quad
        &\tilde r^\beta
        |\partial_s^l\na_x^k\na_{z,w}^\beta Y_1|
        \leq
        C_{l,k,\beta}\eta_{\rm amb}e^{-(\tau+\tau_0)}
        \tilde f^{-\frac{k}{2}},\\
        \tilde r^\beta
        |\partial_s^l\na_x^k\na_{z,w}^\beta Y_2|
        &\leq
        C_{l,k,\beta}\eta_{\rm amb}e^{-(\tau+\tau_0)}
        \tilde f^{\frac12-\frac{k}{2}},
        &\quad
        &\tilde r^\beta
        |\partial_s^l\na_x^k\na_{z,w}^\beta Y_3|
        \leq
        C_{l,k,\beta}\eta_{\rm amb}e^{-(\tau+\tau_0)}
        \tilde f^{1-\frac{k}{2}}.
\end{align*}
These estimates hold whenever \(2l+k+\beta\leq4n+6\).

For \(|F(x)|\leq\Gamma_0e^{\frac12(\tau+\tau_0)}\), we have
\(\tilde f^{1/2}\leq C\Gamma_0e^{\frac12(\tau+\tau_0)}\).  
Hence
\[
        e^{-(\tau+\tau_0)}\tilde f^{\frac12-\frac{k}{2}}
        \leq
        C\Gamma_0e^{-\frac12(\tau+\tau_0)}\tilde f^{-\frac{k}{2}}\,
\text{ and }\,
        e^{-(\tau+\tau_0)}\tilde f^{1-\frac{k}{2}}
        \leq
        C\Gamma_0^2\tilde f^{-\frac{k}{2}}.
\]
Thus \eqref{e:admissible-Y0}--\eqref{e:admissible-Y3} follow after
choosing the homothety scale so that
\[
        C_{n,\Sigma}\eta_{\rm amb}\Gamma_0^2\leq\delta_0.
\]
This choice also makes the full second-order coefficients uniformly elliptic.

We finally verify the exterior part of Assumption~\ref{t:rmcf}.  Choose
normal-coordinate balls \(U'\Subset U\) so that every capped initial
hypersurface lies in \(U'\), and so that \(\chi(\xi)=|\xi|^2\) satisfies
\(\na_{g^{(\rho)}}^2\chi\geq c g^{(\rho)}\) on \(U\).  Extend
\(g^{(\rho)}|_U\) to a complete bounded-geometry metric \(\widehat g\) on
\(\RR^{n+1}\), with \(\widehat g=g^{(\rho)}\) on \(U\).  Along the flow,
\[
        (\partial_t-\Delta_{M_t})\chi
        =-\operatorname{tr}_{TM_t}\na_{g^{(\rho)}}^2\chi
        \leq-cn.
\]
The maximum principle keeps \(M_t\) in \(U'\).  Hence the same flow is also
a mean curvature flow in \((\RR^{n+1},\widehat g)\).  On
\(f_M\geq\frac14\Gamma_0\), the estimates give
the hypotheses of Chen--Yin pseudolocality
\cite[Theorem~7.3]{CY}.  The pseudolocality and interior derivative
estimates imply, on the part issued from
\(f_M\geq\frac12\Gamma_0\),
\[
        |\na^jA_{M_t}|
        \leq
        C_j\Gamma_0^{-\frac{j+1}{2}},
        \qquad 0\leq j\leq4n+6,
\]
throughout \(-1\leq t<0\).  This proves \eqref{e:exterior-regularity}.
Standard compact mean curvature flow theory supplies the remaining well-posedness and continuation statements.  
This proves Proposition~\ref{p:riem-admissible}.

\subsection{The rotationally symmetric forcing}
\label{app:rotational-forcing}

We prove Proposition~\ref{p:rot-admissible} in this section.  Let
\(m\geq2\), write \(\RR^{n+m}=\RR^n\times\RR^m\), and let
\(SO(m)\) act on the \(\RR^m\)-factor.
The quotient is identified with
\[
        \RR^{n+1}_+=\{(x_1,\ldots,x_n,y):y\geq0\}
\]
where $y:=|\mathbf y|$ for $\mathbf y\in \RR^m.$
Let \(M'\subset\RR^n\times\RR^m\) be an \(SO(m)\)-invariant hypersurface
which does not meet the rotation axis, and let
\(M\subset\RR^{n+1}_+\)
be its quotient.  
Then
\[
        \mathcal A(M)
        := \operatorname{Area}(M')
        = \omega_{m-1} \int_M y^{m-1}\,d\mathcal H^n.
\]
For a normal variation \(X=\zeta\nu\) of \(M\),
\[
\begin{aligned}
        \delta\mathcal A(M)
        &=
        \omega_{m-1}
        \int_M
        \left(
        (m-1)y^{m-2}\zeta\nu_y
        -
        y^{m-1}H_M\zeta
        \right)\,d\mathcal H^n\\
        &=
        -\omega_{m-1}
        \int_M
        \zeta
        \left(
        H_M-\frac{m-1}{y}\nu_y
        \right)
        y^{m-1}\,d\mathcal H^n.
\end{aligned}
\]
Thus, the quotient mean curvature vector is
\[
        \mathbf H_M
        -
        \frac{m-1}{y}(\partial_y)^\perp.
\]

Next, translate the rotation axis to
\(y=-C_{\mathrm{ax}}e^{\frac{\tau_0}{2}}\)
where \(C_{\mathrm{ax}}>1\) will be chosen large.  
For every capped profile and permitted perturbation used below, the cap
construction gives
\[
        \sup_M|y|
        \leq C_\Sigma\Gamma_0^{1/2}.
\]
Thus, after requiring
\(C_{\mathrm{ax}}\geq2C_\Sigma\Gamma_0^{1/2}\), the entire initial
profile satisfies
\begin{equation}
\label{e:app-global-axis-separation}
        C_{\mathrm{ax}}e^{\tau_0/2}+y
        \geq\frac12C_{\mathrm{ax}}e^{\tau_0/2}>0.
\end{equation}
Then the flow becomes
\[
        \partial_tX
        =
        \h_M
        -
        \frac{m-1}{C_{\mathrm{ax}}e^{\frac{\tau_0}{2}}+y}
        (\partial_y)^\perp.
\]
After rescaling, for \(X_h=F+h\N_F\), the forcing term is
\begin{equation}
\label{e:app-rot-force-before}
        \phi_r[X_h,\tau]
        =
        -(m-1)
        e^{-\frac12(\tau+\tau_0)}
        \frac{
        \la \partial_y,\N_{X_h}\ra
        }
        {C_{\mathrm{ax}}+e^{-\frac12(\tau+\tau_0)}
        \la X_h,\partial_y\ra}.
\end{equation}
The graph normal identity is
\begin{equation}
\label{e:app-rot-normal-identity}
        \frac{\la \partial_y,\N_{X_h}\ra}
        {\la\N_F,\N_{X_h}\ra}
        =\la \N_F,\partial_y\ra
        - \la\partial_iF,\partial_y\ra
        \big((I+hA_F)^{-1}\big)^{ij}
        (\na h)_j.
\end{equation}
Combining \eqref{e:app-rot-force-before} and \eqref{e:app-rot-normal-identity}, we get
\begin{equation}
\label{e:app-rot-force-graph}
\begin{aligned}
        \frac{\phi_r[F+h\N_F,\tau]}
        {\la\N_F,\N_{X_h}\ra}
        &=
        -(m-1)
        e^{-\frac12(\tau+\tau_0)}
        \frac{
        \la \N_F,\partial_y\ra
        -
        \la\partial_iF,\partial_y\ra
        \big((I+hA_F)^{-1}\big)^{ij}
        (\na h)_j
        }
        {
        C_{\mathrm{ax}}
        +
        e^{-\frac12(\tau+\tau_0)}
        \la F+h\N_F,\partial_y\ra
        }.
\end{aligned}
\end{equation}

Let \(a\) and \(p\) be independent fibre variables corresponding to
\(h\) and \(\tilde r\na h\), respectively, and assume
\[
        |a|+|p|\leq\ep_0\tilde r(x).
\]
After reducing \(\ep_0\), the matrix \(I+aA_F\) is uniformly invertible.
Here \(s\geq0\) denotes the independent time variable of the coefficient
functions.  
For
\(|F(x)|\leq\Gamma_0e^{\frac12(s+\tau_0)}\), choose
\(C_{\mathrm{ax}}\geq C_\Sigma\Gamma_0\) so that
\begin{equation}
\label{e:app-rot-denom-lower}
        C_{\mathrm{ax}}
        +
        e^{-\frac12(s+\tau_0)}
        \la F+a\N_F,\partial_y\ra
        \geq
        \frac12C_{\mathrm{ax}}.
\end{equation}
Indeed,
\(|\la F+a\N_F,\partial_y\ra|
\leq C\tilde f^{1/2}\)
and
\(e^{-\frac12(s+\tau_0)}\tilde f^{1/2}
\leq C\Gamma_0\)
on the region $\{(x,\tau)\,|\,\,|F(x)|\leq\Gamma_0e^{\frac12(s+\tau_0)}\}$.
Since
\[
\begin{aligned}
&C_{\mathrm{ax}}
+
e^{-\frac12(\tau+\tau_0)}
\la F+h\N_F,\partial_y\ra  
=
C_{\mathrm{ax}}
+
e^{-\frac12(\tau+\tau_0)}
\la F,\partial_y\ra
+
e^{-\frac12(\tau+\tau_0)}
h\la \N_F,\partial_y\ra,
\end{aligned}
\]
we have
\[
\begin{aligned}
&-
e^{-\frac12(\tau+\tau_0)}
\frac{\la\N_F,\partial_y\ra}
{
C_{\mathrm{ax}}
+
e^{-\frac12(\tau+\tau_0)}
\la F+h\N_F,\partial_y\ra
} +
e^{-\frac12(\tau+\tau_0)}
\frac{\la\N_F,\partial_y\ra}
{
C_{\mathrm{ax}}
+
e^{-\frac12(\tau+\tau_0)}
\la F,\partial_y\ra
} \\
=&
e^{-(\tau+\tau_0)}
\frac{\la\N_F,\partial_y\ra^2}
{
\left(
C_{\mathrm{ax}}
+
e^{-\frac12(\tau+\tau_0)}
\la F+h\N_F,\partial_y\ra
\right)
\left(
C_{\mathrm{ax}}
+
e^{-\frac12(\tau+\tau_0)}
\la F,\partial_y\ra
\right)
}
h.
\end{aligned}
\]
Therefore, \eqref{e:app-rot-force-graph} is exactly of the form \eqref{e:forced nonlinear} with
\begin{align}
        Y_0(x,s)
        &:=
        -(m-1)
        e^{-\frac12(s+\tau_0)}
        \frac{\la\N_F,\partial_y\ra}
        {
        C_{\mathrm{ax}}
        +
        e^{-\frac12(s+\tau_0)}
        \la F,\partial_y\ra
        },
        \label{e:app-Y0-rot}\\
        Y_1(a,p,x,s)
        &:=
        (m-1)
        e^{-(s+\tau_0)}
        \frac{\la\N_F,\partial_y\ra^2}
        {
        \left(
        C_{\mathrm{ax}}
        +
        e^{-\frac12(s+\tau_0)}
        \la F+a\N_F,\partial_y\ra
        \right)
        \left(
        C_{\mathrm{ax}}
        +
        e^{-\frac12(s+\tau_0)}
        \la F,\partial_y\ra
        \right)
        },
        \label{e:app-Y1-rot}\\
        Y_2^j(a,p,x,s)
        &:=
        (m-1)
        e^{-\frac12(s+\tau_0)}
        \frac{
        \la\partial_iF,\partial_y\ra
        \big((I+aA_F)^{-1}\big)^{ij}
        }
        {
        C_{\mathrm{ax}}
        +
        e^{-\frac12(s+\tau_0)}
        \la F+a\N_F,\partial_y\ra
        },
        \label{e:app-Y2-rot}\\
        Y_3^{ij}(a,p,x,s)
        &:=
        0.
        \label{e:app-Y3-rot}
\end{align}
These coefficients are independent of \(p\).  Evaluating at
\((a,p,s)=(h,\tilde r\na h,\tau)\) gives
\eqref{e:app-rot-force-graph}, and hence~\eqref{e:forced nonlinear}.

It remains to estimate the coefficients.  All \(x\)-derivatives below are
taken with \((a,p,s)\) fixed.  The shrinker estimates imply
\begin{align}
        |\na_x^k\la\N_F,\partial_y\ra|
        &\leq
        C_k\tilde f^{-\frac{k}{2}},
        \label{e:app-rot-symbol-a}\\
        \tilde r^\beta
        \left|
        \na_x^k\partial_a^\beta
        \left[
        \la\partial_iF,\partial_y\ra
        \big((I+aA_F)^{-1}\big)^{ij}
        \right]
        \right|
        &\leq
        C_{k,\beta}\tilde f^{-\frac{k}{2}},
        \label{e:app-rot-symbol-b}\\
        \tilde r^\beta
        |\na_x^k\partial_a^\beta
        \la F+a\N_F,\partial_y\ra|
        +|\na_x^k\la F,\partial_y\ra|
        &\leq
        C_{k,\beta}\tilde f^{\frac12-\frac{k}{2}}.
        \label{e:app-rot-symbol-denom}
\end{align}
Using \eqref{e:app-rot-denom-lower} and repeated differentiation of an
inverse function, both inverse denominators are bounded by \(2C_{\mathrm{ax}}^{-1}\).  
Every nonzero mixed derivative satisfies 
\begin{equation}
\label{e:app-rot-inverse-est}
        \tilde r^\beta
        \left|
        \partial_s^l\na_x^k\partial_a^\beta
        \left(
        C_{\mathrm{ax}}
        +e^{-\frac12(s+\tau_0)}
        \la F+a\N_F,\partial_y\ra
        \right)^{-1}
        \right|
        \leq
        C_{l,k,\beta}C_{\mathrm{ax}}^{-2}
        e^{-\frac12(s+\tau_0)}
        \tilde f^{\frac12-\frac{k}{2}},
\end{equation}
and the same estimate holds with \(a=0\).  
Here \(l+k+\beta\geq1\).

Combining these estimates with
\eqref{e:app-Y0-rot}--\eqref{e:app-Y3-rot}, we obtain, whenever
\(2l+k+\beta\leq4n+6\),
\begin{align*}
        |\partial_s^l\na_x^kY_0|
        &\leq
        C_{l,k}(m-1)C_{\mathrm{ax}}^{-1}
        e^{-\frac12(s+\tau_0)}
        \tilde f^{-\frac{k}{2}},
        &\quad
        &\tilde r^\beta
        |\partial_s^l\na_x^k\na_{a,p}^\beta Y_1|
        \leq
        C_{l,k,\beta}(m-1)C_{\mathrm{ax}}^{-2}
        e^{-(s+\tau_0)}
        \tilde f^{-\frac{k}{2}},
        \label{e:app-Y1-rot-est}\\
        \tilde r^\beta
        |\partial_s^l\na_x^k\na_{a,p}^\beta Y_2|
        &\leq
        C_{l,k,\beta}(m-1)C_{\mathrm{ax}}^{-1}
        e^{-\frac12(s+\tau_0)}
        \tilde f^{-\frac{k}{2}},
       &\quad
        &\tilde r^\beta
        |\partial_s^l\na_x^k\na_{a,p}^\beta Y_3|
        =0.
\end{align*}
After increasing
\(
        C_{\mathrm{ax}}
        =C_{\mathrm{ax}}(\delta_0,\Gamma_0,m,\Sigma),
\)
these are precisely
\eqref{e:admissible-Y0}--\eqref{e:admissible-Y3}.

We finally verify the exterior-regularity requirement.  
The full lift in $\bb R^{n+1}$ is
\[
        M'_t
        = \left\{
        \bigl(x,(C_{\mathrm{ax}}e^{\tau_0/2}+y)\omega\bigr):
        (x,y)\in M_t,\ \omega\in S^{m-1}
        \right\}.
\]
Let \(\varrho=|\mathbf y|=C_{\mathrm{ax}}e^{\tau_0/2}+y\) denote its distance to the rotation axis \(\RR^n\times\{0\}\).  
By
\eqref{e:app-global-axis-separation},
\[
        \varrho|_{M'_{-1}}
        \geq\frac12C_{\mathrm{ax}}e^{\tau_0/2}.
\]
Consider the shrinking cylinders $\RR^n\times S^{m-1}(r(t))$ with
\[
        r(t)^2
        = \frac{C_{\mathrm{ax}}^2e^{\tau_0}}{16}
        -2(m-1)(t+1).
\]
If \(C_{\mathrm{ax}}^2e^{\tau_0}\geq64(m-1)\), the avoidance principle gives
\[
        \varrho|_{M'_t}\geq r(t)
        \geq\frac{C_{\mathrm{ax}}e^{\tau_0/2}}{4\sqrt2}>0
\]
for every time $t\in[-1,0)$ at which the lifted flow is smooth.  
Thus the lift cannot meet the rotation axis.

If \(\nu_y=\la\N_M,\partial_y\ra\), then
\[
        |A_{M'}|^2
        =|A_M|^2+(m-1)\frac{\nu_y^2}{\varrho^2}.
\]
The higher-order cap estimates and the preceding lower bound for
\(\varrho\) give, on the lift of
\(\set{f_M\geq\frac14\Gamma_0}\),
\[
        |\na^jA_{M'_{-1}}|
        \leq
        C_j\Gamma_0^{-\frac{j+1}{2}},
        \qquad 0\leq j\leq4n+6,
\]
together with a graphical radius comparable to
\(\Gamma_0e^{1/2}\).  Since \(M'_t\) is an unforced
Euclidean mean curvature flow, pseudolocality and the interior derivative
estimates give, on the lift of the part issued from
\(\set{f_M\geq\frac12\Gamma_0}\),
\[
        |\na^jA_{M'_t}|
        \leq
        C_j\Gamma_0^{-\frac{j+1}{2}},
        \qquad 0\leq j\leq4n+6,
\]
throughout \(-1\leq t<0\).  Restricting to the profile directions gives
\eqref{e:exterior-regularity} for the quotient flow.  
Standard compact mean curvature flow theory and preservation of the \(SO(m)\)-symmetry give
the remaining well-posedness and continuation statements.  This proves
Proposition~\ref{p:rot-admissible}.

\section{Capping off asymptotically conical shrinkers}
\label{sec:Cap}

In this section, given an asymptotically conical hypersurface $\Sigma,$ we construct a closed embedded hypersurface by directly capping off the asymptotically conical ends of $\Sigma$.
For any large enough $R,$ the resulting closed hypersurface $\Sigma^{\rm cap}_R$ will be identical to $\Sigma$ in $B_R$ and have small curvature outside $B_{2R};$
that is, $\Sigma^{\rm cap}_R$ satisfies \eqref{cap-agreement} and \eqref{cap-curv-decay}.
Unlike the doubling construction performed in~\cite{LZ}, this procedure does not require a second copy of $\Sigma$ or a reflection across a hyperplane.

Let $\mathcal C$ be the asymptotic cone of $\Sigma$, and denote its link by
\(\Lambda:=\mathcal C\cap S^n.\)
Thus, $\Lambda$ is a smooth closed embedded hypersurface of $S^n$.
For simplicity, assume that $\Lambda$ is connected, as different connected components can be treated separately. 
By the Jordan--Brouwer separation theorem, $S^n\setminus\Lambda$ has two
connected components. Fix one of them and denote its closure by
$\Omega$ so that
\(\partial\Omega=\Lambda.\)

By the asymptotically conical assumption, together with Lu Wang's uniqueness of the asymptotically conical ends of shrinkers \cite{W14}, for $R_0$ large enough, the end of $\Sigma$ can be parametrized over
$\mathcal C\setminus B_{R_0}$ as
\[
    F_0(r,\theta)
    =
    r\theta+f(r,\theta)\,\N_{\mathcal C}(\theta)
    \,\,\text{ for }\,\,
    (r,\theta)\in[R_0,\infty)\times\Lambda,
\]
where $\N_{\mathcal C}$ is a unit normal vector field along
$\mathcal C$ and $f$ satisfies the usual decay estimates; see
\cite{W14,CS}.

Fix $R\gg R_0$. Choose a smooth cutoff function
$\chi_R\colon [R,\infty)\to[0,1]$ satisfying
\[
    \chi_R(r)=
    \begin{cases}
        1, & R\leq r\leq \frac32R,\\
        0, & r\geq 2R,
    \end{cases}
\]
with \(\abs{\partial_r^k\chi_R}\leq C_kR^{-k}\)
for every $k\geq 1$. 
Define
\[
    F_R(r,\theta)
    =
    r\theta
    +
    \chi_R(r)f(r,\theta)\N_{\mathcal C}(\theta)
    \quad\text{for }\,
    (r,\theta)\in[R,3R]\times\Lambda.
\]
Thus, \(F_R(r,\theta)=F_0(r,\theta)\)
when \(R\leq r\leq \frac32R\),
whereas \(F_R(r,\theta)=r\theta\)
when \(2R\leq r\leq 3R.\)
For $R$ sufficiently large, the decay estimates for $f$ imply that $F_R$ is an embedding. 
We may therefore define the truncated hypersurface
\[
    \Sigma_R^{\mathrm{tr}}
    :=
    \pr{\Sigma\setminus F_0((R,\infty)\times \Gamma)}
    \cup
    F_R\big([R,3R]\times\Lambda\big).
\]
Since $\chi_R$ is identically one near $r=R$, the two pieces agree smoothly along $F_0(R,\Lambda)$. 
Moreover,
\[
    \partial\Sigma_R^{\mathrm{tr}}
    = 3R\, \Lambda
    = \{3R\,\omega:\omega\in\Lambda\}.
\]

We now attach a cap to this boundary. 
Consider the scaled spherical domain
\[
    K_R
    :=
    3R\, \Omega
    = \{3R\,\omega:\omega\in\Omega\}
    \subset \partial B_{3R}.
\]
Its boundary is
\(\partial K_R=3R\,\Lambda
= \partial\Sigma_R^{\mathrm{tr}}.\)
Consequently,
\(\Sigma_R^{\mathrm{tr}}\cup K_R\)
is a compact embedded hypersurface with a corner
along $3R\,\Lambda$. 

We next smooth this corner.
Let $\N_\Lambda$ be the unit normal of $\Lambda$ in $S^n$ pointing into $\Omega$. 
For a sufficiently small constant $\varepsilon_0>0$, a neighborhood of $3R\,\Lambda$ can be parametrized
by
\[
    \Psi_R(\theta,s,t)
    =
    (3R+s)
    \exp_\theta^{S^n}
    \left(
        \frac{t}{3R}\N_\Lambda(\theta)
    \right),
\]
where
\(\theta\in\Lambda, |s|<\varepsilon_0R,\)
and \(|t|<\varepsilon_0R.\)
Equivalently, in these coordinates,
\[
    \Psi_R(\theta,s,t)
    =
    (3R+s)
    \left(
        \cos\left(\frac{t}{3R}\right)\theta
        +
        \sin\left(\frac{t}{3R}\right)
        \N_\Lambda(\theta)
    \right).
\]
The conical part of $\Sigma_R^{\mathrm{tr}}$ is represented by
\(
    \{t=0,\ s\leq 0\},
\)
while the spherical cap $K_R$ is represented by
\(
    \{s=0,\ t\geq 0\}.
\)
Thus, in the $(s,t)$-plane, their union is locally modeled on the
broken curve
\[
    \big((-\infty,0]\times\{0\}\big)
    \cup
    \big(\{0\}\times[0,\infty)\big).
\]
Choose a smooth embedded curve
\(\gamma_R\subset\{s\leq 0,\ t\geq 0\}\)
which agrees with the negative $s$-axis and the positive $t$-axis outside the disk
\(\{s^2+t^2<(\varepsilon_0R)^2\},\)
and which smoothly rounds off the corner at $(0,0)$. 
We replace the cornered portion of
$\Sigma_R^{\mathrm{tr}}\cup K_R$ by
\[
    \left\{
        \Psi_R(\theta,s,t):
        \theta\in\Lambda,\ (s,t)\in\gamma_R
    \right\}.
\]
Denote the resulting hypersurface by
\(\Sigma_R^{\mathrm{cap}}.\)

Since $\Psi_R$ is an embedding on the above tubular neighborhood and $\gamma_R$ is a smooth embedded curve, the resulting hypersurface $\Sigma_R^{\mathrm{cap}}$ is smooth and embedded. 
It is compact and has no boundary. 
Furthermore,
\begin{align}\label{cap-agreement}
    \Sigma_R^{\mathrm{cap}}\cap\ovl B_R
    = \Sigma\cap\ovl B_R.
\end{align}
We have therefore constructed a closed embedded hypersurface which agrees with the original shrinker on an arbitrarily large compact set by capping off its asymptotically conical end.

Finally, if the rounding curve $\gamma_R$ is obtained by scaling a fixed rounding profile by the factor $R$, then the geometry of the added cap satisfies the scale-invariant estimate
\begin{align}\label{cap-curv-decay}
    \sup_{\Sigma_R^{\mathrm{cap}}
    \setminus B_{2R}} |\na^j A|
    \leq \frac{C_j}{R^{j+1}}.
\end{align}
Here $C<\infty$ depends only on the link $\Lambda$ and the chosen rounding profile.
Thus, given $\Sigma,$ we could choose such a constant $C=C(\Sigma)<\infty$ such that \eqref{cap-curv-decay} holds for any $R$ large enough.

\end{document}